\newcommand{\thmref}[1]{Theorem~\ref{#1}}
\newcommand{\lemref}[1]{Lemma~\ref{#1}}
\newcommand{\remref}[1]{Remark~\ref{#1}}
\newcommand{\corref}[1]{Corollary~\ref{#1}}
\newcommand{\figref}[1]{Figure~\ref{#1}}
\newtheorem{theorem}{Theorem}[section]
\newtheorem{corollary}[theorem]{Corollary}
\newtheorem{lemma}[theorem]{Lemma}
\newtheorem{remark}[theorem]{Remark}
\newcommand{\secref}[1]{\S\ref{#1}}
\def\mod{\text{mod}}
\begin{document}

\title[Beyond Wolstenholme's Theorem]
{Beyond Wolstenholme's Theorem}

\author{Zubeyir Cinkir}
\address{Zubeyir Cinkir\\
Department of Industrial Engineering\\
Abdullah Gul University\\
38100, Kayseri, TURKEY\\}
\email{zubeyir.cinkir@agu.edu.tr}

\keywords{Prime number, Binomial Coefficients, Wolstenholme's Theorem, Multiple Harmonic Sums, Congruences of Sums, Newton's Identities, Elementary Symmetric Functions}
\thanks{I would like to thank to anonymous referee for helpful feedback on earlier version of this paper.}

\begin{abstract}
Wolstenholme's type summations involve certain powers of all residues $k$ modulo some prime number $p$. We first consider the sums of double or triple products of certain powers of all residues, e.g., the sums of the terms $(a+k)^m(b+k)^n$ or 
$(a+k)^m(b+k)^n(c+k)^s$ as $k$ ranges over all residues modulo $p$. We consider the sums of double or triple ratios of such terms. We showed that each of such sums is congruent to some simpler expression involving certain binomial coefficients. We also generalize these results to the sums of products or ratios of arbitrary $n$ terms: $(a_1+k)^{m_1}$, ..., $(a_n+k)^{m_n}$. We relate such summations to the sum of certain coefficients of polynomials of type $(a_1-a_n+x)^{m_1} \cdots (a_{n-1}-a_n+x)^{m_{n-1}}$.
\end{abstract}

\maketitle

\section{Introduction}\label{sec introduction}

The simplest of Wolstenholme's type summations is as follows:
\begin{theorem}\label{thm power sum0}
Let $p$ be a prime number. For any non-negative integer $n$, we have the following congruence modulo $p$:
\begin{equation*}\label{eqn sum0}
\begin{split}
\sum_{k=1}^{p-1} k^n \equiv
\begin{cases} 0, & \text{if $p-1 \nmid n$}\\
-1, & \text{if $p-1 \mid n$}.
\end{cases}
\end{split}
\end{equation*}
\end{theorem}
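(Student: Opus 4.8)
The plan is to reduce everything to the cyclic group structure of $(\ZZ/p\ZZ)^\times$. First I would dispose of the case $n = 0$ directly: then $\sum_{k=1}^{p-1} k^0 = p-1 \equiv -1 \pmod{p}$, which is consistent with the statement because $p-1 \mid 0$ places this case in the second branch. From now on assume $n \ge 1$ and set $S = \sum_{k=1}^{p-1} k^n$.

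The first real step is a scaling trick. For any integer $a$ with $p \nmid a$, multiplication by $a$ permutes the nonzero residues modulo $p$, so reindexing the sum gives
\begin{equation*}
S \equiv \sum_{k=1}^{p-1}(ak)^n = a^n \sum_{k=1}^{p-1} k^n = a^n S \pmod{p},
\end{equation*}
hence $(a^n - 1)S \equiv 0 \pmod{p}$ for every $a$ coprime to $p$. The second step is to split on whether $p-1 \mid n$. If $p - 1 \mid n$, then Fermat's little theorem gives $k^n = (k^{p-1})^{n/(p-1)} \equiv 1 \pmod{p}$ for each $k \in \{1, \dots, p-1\}$, so $S \equiv p-1 \equiv -1 \pmod{p}$, as claimed. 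If $p - 1 \nmid n$, it is enough to exhibit one $a$ coprime to $p$ with $a^n \not\equiv 1 \pmod{p}$: for such an $a$ the factor $a^n - 1$ is invertible modulo $p$, and the congruence above forces $S \equiv 0 \pmod{p}$.

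The crux — and the only place any structural input is needed — is producing that $a$. I would take $a$ to be a primitive root modulo $p$ (which exists since $(\ZZ/p\ZZ)^\times$ is cyclic of order $p-1$); then $a^n \equiv 1 \pmod{p}$ holds iff $p - 1 \mid n$, so the hypothesis $p-1 \nmid n$ guarantees $a^n \not\equiv 1$. A proof avoiding primitive roots is also available: the polynomial $x^n - 1$ has at most $\gcd(n, p-1)$ roots in the field $\ZZ/p\ZZ$, and $\gcd(n, p-1) < p-1$ precisely when $p-1 \nmid n$, so some nonzero residue $a$ fails $a^n \equiv 1$. Either way, the main obstacle is purely this existence statement; the rest is bookkeeping. (The same idea yields an alternative one-line argument: writing the nonzero residues as $1, g, g^2, \dots, g^{p-2}$ for a primitive root $g$, the sum becomes the geometric series $\sum_{j=0}^{p-2}(g^n)^j$, which equals $p-1 \equiv -1$ when $g^n \equiv 1$ and equals $\frac{(g^{p-1})^n - 1}{g^n - 1} \equiv 0$ otherwise.)
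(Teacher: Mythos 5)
Your proof is correct. Note that the paper itself does not prove this theorem at all: it only cites MacMillan and Sondow \cite{MS}, whose argument deliberately avoids primitive roots and instead proves the power-sum congruence by induction on $n$ using Pascal's identity applied to $\sum_k \left((k+1)^{n+1}-k^{n+1}\right)$. Your argument is the classical multiplicative one: the scaling step $S\equiv a^nS$, the reduction to exhibiting a single $a$ with $a^n\not\equiv 1$, and the appeal to a primitive root (or, as in your fallback, to the bound on the number of roots of $x^{\gcd(n,p-1)}-1$ over the field $\ZZ/p\ZZ$) are all standard and airtight; the $n=0$ and $p-1\mid n$ branches via Fermat's little theorem are handled correctly, and the edge case $p=2$ causes no trouble since $p-1=1$ divides every $n$. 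What the two approaches buy is different: the Pascal's-identity route is more elementary (no group theory, no counting of polynomial roots) and meshes with the binomial-coefficient machinery this paper runs on, while your route is shorter, explains \emph{why} the dichotomy is governed by $p-1\mid n$ (it is exactly the condition for $x\mapsto x^n$ to be trivial on the cyclic group $(\ZZ/p\ZZ)^\times$), and generalizes immediately to character sums and to sums over any finite cyclic group. Either is a legitimate proof of the stated theorem.
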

A proof of \thmref{thm power sum0} and references to other proofs can be found in \cite{MS}. 

Wolstenholme's type summations involving slightly different terms can be given as follows:
\begin{theorem}\label{thm power sum1}
Let $p \geq 5$ be a prime number. We have the following congruences modulo $p$:
\begin{equation*}\label{eqn sum1}
\begin{split}
\sum_{k=1}^{p-1} \frac{1}{k} \equiv 0, \quad \quad
\sum_{k=1}^{p-1} \frac{1}{k^2} \equiv 0, \quad \quad
\sum_{k=1}^{p-1} \frac{1}{k^3} \equiv 0.
\end{split}
\end{equation*}
\end{theorem}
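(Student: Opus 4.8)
The plan is to reduce all three congruences to \thmref{thm power sum0}. The key observation is that the map $k \mapsto k^{-1} \bmod p$ permutes the set of nonzero residues $\{1, 2, \dots, p-1\}$: it is well defined because $p$ is prime, and it is an involution on $(\ZZ/p\ZZ)^\times$, hence a bijection. Consequently, for every integer $n$,
\[
\sum_{k=1}^{p-1} \frac{1}{k^n} \equiv \sum_{k=1}^{p-1} (k^{-1})^n \equiv \sum_{j=1}^{p-1} j^n \pmod p,
\]
where in the last step the sum is reindexed by $j \equiv k^{-1} \bmod p$.

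Next I would apply \thmref{thm power sum0} with $n = 1, 2, 3$. Since $p \geq 5$, we have $p - 1 \geq 4$, so $p - 1$ divides none of $1$, $2$, $3$; in each case the hypothesis $p - 1 \nmid n$ of \thmref{thm power sum0} is satisfied, and therefore $\sum_{j=1}^{p-1} j^n \equiv 0 \pmod p$. Combining this with the displayed congruence yields $\sum_{k=1}^{p-1} 1/k^n \equiv 0 \pmod p$ for $n = 1, 2, 3$, which is exactly the claim.

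There is essentially no obstacle here; the only point worth flagging is the role of the hypothesis $p \geq 5$, which is used precisely to guarantee $p - 1 \nmid n$ for $n \leq 3$. For $p = 2$ or $p = 3$ the statement genuinely fails: when $p = 3$ we have $p - 1 = 2 \mid 2$, and indeed $\sum_{k=1}^{2} 1/k^2 = 1 + \tfrac14 \equiv 1 + 1 = 2 \equiv -1 \pmod 3$, in agreement with the second case of \thmref{thm power sum0}. One could alternatively dispatch the first and third congruences directly via the pairing $k \leftrightarrow p - k$, using $1/k^n + 1/(p-k)^n \equiv 0 \pmod p$ for odd $n$, but the uniform reduction above handles all three at once and requires nothing beyond \thmref{thm power sum0}.
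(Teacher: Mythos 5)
Your proof is correct. The paper does not actually prove this theorem --- it only cites external references (Wolstenholme, Hardy--Wright, etc.) --- but your argument is the standard and complete one for the mod $p$ statement: the inversion $k \mapsto k^{-1}$ permutes the nonzero residues, so each sum $\sum_{k=1}^{p-1} k^{-n}$ is congruent to $\sum_{j=1}^{p-1} j^{n}$, and \thmref{thm power sum0} applies since $p \geq 5$ forces $p-1 \geq 4 > n$ for $n=1,2,3$, hence $p-1 \nmid n$. Your remark about why $p \geq 5$ is needed (e.g.\ the failure at $p=3$ for $n=2$) is also accurate.
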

In \thmref{thm power sum1}, the first congruence holds modulo $p^2$ if $p \geq 5$ and this is known as Wolstenholme's Theorem. Its proofs can be found in \cite{K}, \cite[Thm 116]{HW} and \cite{W}.
Proofs of the second congruence can be found in  \cite{A}, \cite[Thm 117]{HW} and \cite{M}.
The third congruence holds modulo $p^2$ if $p>5$ \cite[Thm 131]{HW}.

There are various generalizations of \thmref{thm power sum0} and \thmref{thm power sum1}. We list some of them below.
\begin{theorem}\label{thm power sum2}
Let $p$ be a prime number, and let $n$ be a positive integer such that $2n \leq p-1$. We have the following congruences:
\begin{equation*}\label{eqn sum2}
\begin{split}
\sum_{k=1}^{p-1} \frac{1}{k^{2n-1}} \equiv 0 \, \, \, mod \, \, p^2, \quad \quad
\sum_{k=1}^{p-1} \frac{1}{k^{2n}} \equiv 0 \, \, \, mod \, \, p.
\end{split}
\end{equation*}
\end{theorem}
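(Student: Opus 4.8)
The plan is to reduce \thmref{thm power sum2} to \thmref{thm power sum0} using two elementary symmetries of the set of nonzero residues modulo $p$: the involution $k\mapsto k^{-1}$ and the reflection $k\mapsto p-k$. All the sums below make sense modulo $p^{2}$ because the common denominator $((p-1)!)^{2n-1}$ is prime to $p$; we shall also use that $p$ is odd, so $2$ is invertible modulo $p^{2}$, and that $p-1\nmid 2n$ in the relevant range.

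I would first establish the second congruence modulo $p$. Since $k\mapsto k^{-1}$ permutes $\{1,2,\dots,p-1\}$ modulo $p$, reindexing the sum gives
\[
\sum_{k=1}^{p-1}\frac{1}{k^{2n}}\;\equiv\;\sum_{k=1}^{p-1}k^{2n}\pmod p .
\]
As $0<2n$ and $p-1\nmid 2n$, \thmref{thm power sum0} makes the right-hand side vanish modulo $p$, which is the second congruence.

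For the first congruence I would pair $k$ with $p-k$. Using $\{1,\dots,p-1\}=\{p-1,\dots,1\}$,
\[
2\sum_{k=1}^{p-1}\frac{1}{k^{2n-1}}\;=\;\sum_{k=1}^{p-1}\left(\frac{1}{k^{2n-1}}+\frac{1}{(p-k)^{2n-1}}\right).
\]
Write $(p-k)^{2n-1}=(-k)^{2n-1}(1-p/k)^{2n-1}$ and expand the $p$-adic unit, $(1-p/k)^{-(2n-1)}\equiv 1+\frac{(2n-1)p}{k}\pmod{p^{2}}$ (every later term of the binomial series carries a factor $p^{2}$); since $2n-1$ is odd, $(-k)^{-(2n-1)}=-k^{-(2n-1)}$, so
\[
\frac{1}{(p-k)^{2n-1}}\;\equiv\;-\frac{1}{k^{2n-1}}-\frac{(2n-1)p}{k^{2n}}\pmod{p^{2}}.
\]
Hence each bracket is $\equiv-\dfrac{(2n-1)p}{k^{2n}}\pmod{p^{2}}$, and summing over $k$ gives
\[
2\sum_{k=1}^{p-1}\frac{1}{k^{2n-1}}\;\equiv\;-(2n-1)\,p\sum_{k=1}^{p-1}\frac{1}{k^{2n}}\pmod{p^{2}}.
\]
By the mod-$p$ congruence just proved the sum on the right is $\equiv 0\pmod p$, so the whole right-hand side is $\equiv 0\pmod{p^{2}}$; cancelling the invertible factor $2$ gives $\sum_{k=1}^{p-1}k^{-(2n-1)}\equiv 0\pmod{p^{2}}$.

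I do not expect a serious obstacle. The two points to handle with care are: (i) the reduction shows the mod-$p^{2}$ statement for the odd exponent $2n-1$ follows from the mod-$p$ statement for the \emph{even} exponent $2n$, so one genuinely needs $p-1\nmid 2n$ in order to land in the vanishing case of \thmref{thm power sum0} rather than the $-1$ case; and (ii) the $p$-adic bookkeeping in expanding $1/(p-k)^{2n-1}$ modulo $p^{2}$, where one checks that every $k$ occurring in a denominator is a unit modulo $p$, so that no unwanted factor of $p$ is lost. A heavier alternative would be to express $\sum_{k=1}^{p-1}k^{m}$ modulo $p^{2}$ via Bernoulli numbers and invoke the Kummer and von Staudt--Clausen congruences, but the pairing argument above is shorter and self-contained given \thmref{thm power sum0}.
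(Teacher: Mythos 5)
The paper does not actually prove this statement; it simply cites Bayat's article \cite{B}, so there is no internal argument to compare yours against. Your reduction is a legitimate, self-contained proof and is the standard ``pairing'' argument: the substitution $k\mapsto k^{-1}$ turns the even-exponent sum into $\sum k^{2n}$, which \thmref{thm power sum0} kills mod $p$ provided $p-1\nmid 2n$, and the reflection $k\mapsto p-k$ together with the expansion of $(p-k)^{-(2n-1)}$ mod $p^{2}$ reduces the odd-exponent congruence to the even one. The $p$-adic bookkeeping is right: every $k^{-1}$ is a unit, the tail of the expansion of $(1-p/k)^{-(2n-1)}$ is divisible by $p^{2}$, the passage from $\sum k^{-2n}\equiv 0 \pmod p$ to $p\sum k^{-2n}\equiv 0\pmod{p^{2}}$ is valid when the inverses are taken mod $p^{2}$, and $2$ is invertible since the hypotheses force $p$ odd.

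The one point you should make explicit rather than leave as a remark is the boundary case $2n=p-1$, which the stated hypothesis $2n\le p-1$ does allow (both sides are even, so equality can occur). There your argument genuinely breaks: $\sum_{k=1}^{p-1}k^{-(p-1)}\equiv -1\pmod p$, not $0$, so the second congruence fails outright, and the first then only comes out mod $p^{2}$ up to the error term $-(2n-1)p\sum k^{-2n}\equiv (2n-1)p\not\equiv 0\pmod{p^{2}}$. This is not a defect of your proof but of the theorem as printed: for $p=5$, $n=2$ one computes $\sum_{k=1}^{4}k^{-3}\equiv 20\pmod{25}$ and $\sum_{k=1}^{4}k^{-4}\equiv -1\pmod 5$, consistent with the paper's own remark after \thmref{thm power sum1} that the congruence for $\sum k^{-3}$ mod $p^{2}$ requires $p>5$. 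So your proof establishes the theorem under the corrected hypothesis $2n\le p-3$ (equivalently $p-1\nmid 2n$ in the given range), which is the hypothesis under which the statement is actually true; you identified exactly the right condition, and I would simply promote that observation from a closing caveat to a stated assumption.
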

The proof of \thmref{thm power sum2} can be found in \cite[Thm 3]{B}.

The following references contain more of similar results: \cite{A}, \cite{CD}, \cite{G}, \cite{Me} and \cite{ZC}.
One can also check the references in \cite{Me2} for similar results.

We recall Fermat's Little Theorem, as we use it frequently in our proofs \cite[Thm 71]{HW}.
\begin{equation}\label{eqn FLT}
\begin{split}
\text{If $p$ is a prime and $p \nmid a$ then $a^{p-1} \equiv 1 \, \, \, mod \, \, p$.}
\end{split}
\end{equation}
By applying \eqref{eqn FLT}, we have $a^m \equiv a^{m \, \, \mod \, \, p-1} \, \, \, mod \, \, p$ for any
prime number $p$, any integers $a \in \{1, \, 2, \, \ldots, \, p-1\}$ and any positive integer $m$. Therefore, it will be enough to consider the sums of terms with powers less than $p$. This is what we do in this paper. 

For each integer $n$, $k$ and $s$ with $0 \leq s \leq k \leq n$, we have the cancellation identity \cite[pg 29]{BW}:
\begin{equation}\label{eqn product1}
\begin{split}
\binom{n}{k} \binom{k}{s}=\binom{n}{s}\binom{n-s}{k-s}.
\end{split}
\end{equation}

Next, we recall the semi-symmetry properties \cite[Theorems 2.2 and 2.4]{CO}.
Let $p$ be a prime number. For each integers $k$ and $s$ with $0 \leq s \leq k < p$, we have the following congruences modulo $p$:
\begin{equation}\label{eqn thms CO}
\begin{split}
&\binom{k}{s} \equiv (-1)^{k+s} \binom{p-1-s}{k-s} =  (-1)^{k+s} \binom{p-1-s}{p-1-k},\\
&\binom{k}{s} \equiv  (-1)^{s} \binom{p-1-k+s}{s} = (-1)^{s} \binom{p-1-k+s}{p-1-k}.
\end{split}
\end{equation}

\section{The sums of ratios or products of two power terms}\label{sec sumterm2}
In this section, we consider the sums of the ratios of powers of two residue terms of type $\frac{(a+k)^m}{(b+k)^n}$ or the sums of the products of powers of two residue terms of type $(a+k)^m(b+k)^n$. In these sums, the residue terms $(a+k)$ and $(b+k)$ differ by a fixed value. 

We first consider the sums of type $\sum_{\substack{k=1 \\ k \neq a}}^{p-1} \frac{k^m}{(a-k)^n}$, where $p$ is a prime number and $a$, $m$ and $n$ are any non-negative integers less than $p$.
We begin by experimenting with small prime numbers for various values of $a$.
We use \cite{MMA} for our computations. We provide two instances of our computations in 
\figref{fig mod11b} and \figref{fig mod11c}. 

\begin{figure}
\centering
\includegraphics[scale=0.9]{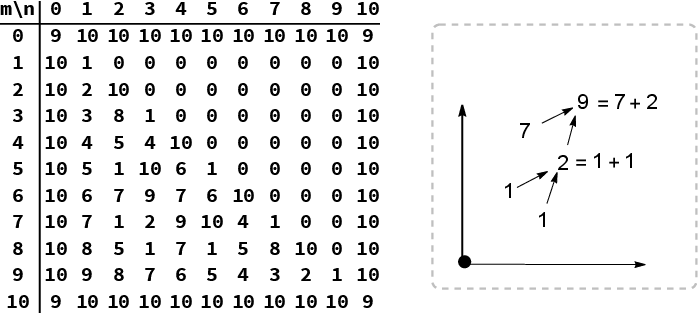} \caption{Summations $\sum_{\substack{k=1 \\ k \neq a}}^{p-1} \frac{k^m}{(a-k)^n}$ modulo $p=11$ for $a=1$. } \label{fig mod11b}
\end{figure}
\begin{figure}
\centering
\includegraphics[scale=0.9]{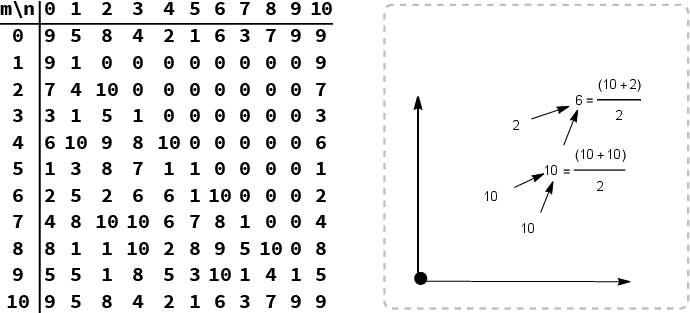} \caption{Summations $\sum_{\substack{k=1 \\ k \neq a}}^{p-1} \frac{k^m}{(a-k)^n}$ modulo $p=11$ for $a=2$.} \label{fig mod11c}
\end{figure}
In these figures, we have the following observations:
\begin{itemize}
\item Numbers at the corners are congruent to $-2 \, \, mod \, \, p$.
\item The first row/column and the last row/column are the same.
\item The numbers in the first row with the order reversed are the same as the numbers at the first column. 
\item In the first column, we have $-a^m \, \, mod \, \, p$ for $m= 1, \, 2, \ldots, p-1$.    
\item Starting from the bottom left corner, we have a modified Pascal's Identity. This is illustrated in both figures. Namely, if the values are considered as a $p \times p$ matrix $B=(b_{i,j})$, then $b_{i,j-1}+b_{i+1,j} \equiv a \cdot b_{i,j} \, \, mod \, \, p$.
\end{itemize}
Based on these observations about our computations, we state the following theorem. Again, its proof is outlined by these observations.
\begin{theorem}\label{thm new power sum1}
Let $p$ be a prime number. For any integers $1 \leq a \leq p-1$, $0 \leq m \leq p-1$ and $0 \leq n \leq p-1$, we have the following congruence modulo $p$:
\begin{equation*}\label{eqn new sum1}
\begin{split}
\sum_{\substack{k=1 \\ k \neq a}}^{p-1} \frac{k^m}{(a-k)^n} \equiv
\begin{cases} 
-a^{p-1-n}, & \text{if $m=0$ and $n \in \{1, \, 2, \, \ldots, \, p-2\}$}\\
-a^m, &  \text{if $n=p-1$ and $m \in \{1, \, 2, \, \ldots, \, p-2\}$} \\
-2,  &\text{if $m, \, n \in \{ 0, \, p-1 \}$}\\
(-1)^{n+1}a^{m-n} \binom{m}{n}, & \text{otherwise}.
\end{cases}
\end{split}
\end{equation*}
\end{theorem}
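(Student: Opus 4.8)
The plan is to collapse the sum onto the elementary power sums of \thmref{thm power sum0}, by one change of variables followed by a binomial expansion. Since $k\neq a$ we have $p\nmid a-k$, so each summand is a well-defined residue modulo $p$. First I would substitute $j\equiv a-k\pmod p$ with $j\in\{1,\dots,p-1\}$: as $k$ runs over $\{1,\dots,p-1\}\setminus\{a\}$, so does $j$ (the only values missed, $0$ and $a$, are the images of $k\equiv a$ and $k\equiv 0$), and $k\equiv a-j$, whence
\[
\sum_{\substack{k=1\\ k\neq a}}^{p-1}\frac{k^m}{(a-k)^n}\equiv\sum_{\substack{j=1\\ j\neq a}}^{p-1}\frac{(a-j)^m}{j^n}\pmod p .
\]
Expanding $(a-j)^m=\sum_{i=0}^m\binom{m}{i}a^{m-i}(-1)^i j^i$ and interchanging the summations turns the right-hand side into
\[
\sum_{i=0}^m\binom{m}{i}(-1)^i a^{m-i}\Bigl(\sum_{j=1}^{p-1}j^{\,i-n}-a^{\,i-n}\Bigr),
\]
where each inner power sum is taken modulo $p$, its exponent reduced modulo $p-1$ by \eqref{eqn FLT}.

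Next I would evaluate the two groups of terms. The subtracted ``removed-point'' terms telescope, since $\sum_{i=0}^m\binom{m}{i}(-1)^i a^{m-i}a^{i-n}=a^{m-n}(1-1)^m$, which is $0$ for $m\geq1$ and equals $a^{-n}\equiv a^{p-1-n}$ for $m=0$. For the main part, \thmref{thm power sum0} gives $\sum_{j=1}^{p-1}j^{\,i-n}\equiv-1$ when $p-1\mid i-n$ and $\equiv0$ otherwise. Because $0\leq i\leq m\leq p-1$ and $0\leq n\leq p-1$ force $-(p-1)\leq i-n\leq p-1$, the divisibility $p-1\mid i-n$ occurs only for $i=n$, or $(i,n)=(0,p-1)$, or $(i,n)=(p-1,0)$ (the last requiring $m=p-1$). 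Thus the main part is a sum of at most three explicit contributions: $-(-1)^n\binom{m}{n}a^{m-n}$ from $i=n$ (present exactly when $n\leq m$), $-a^m$ from $i=0$ (present only when $n=p-1$), and $-(-1)^{p-1}$ from $i=p-1$ (present only when $n=0,\ m=p-1$); here $-(-1)^{p-1}=-1$ for odd $p$, while $p=2$ is trivial because the sum is then empty.

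Finally I would read off the statement by tallying which contributions survive. If $m=0$ and $1\leq n\leq p-2$, nothing survives in the main part and the telescoped term gives $-a^{p-1-n}$. If $n=p-1$ and $1\leq m\leq p-2$, only the $i=0$ contribution survives, giving $-a^m$; alternatively, here $(a-k)^{p-1}\equiv1$ by \eqref{eqn FLT}, so the sum is plainly $\sum_{k\neq a}k^m$ and \thmref{thm power sum0} applies directly. If $m,n\in\{0,p-1\}$, exactly two contributions survive and add to $-2$ (for $(m,n)=(p-1,0)$ both equal $-1$, which is why that pair is grouped here rather than with the generic formula). In every remaining case only the $i=n$ contribution survives, yielding $(-1)^{n+1}\binom{m}{n}a^{m-n}$, which stays valid for $n>m$ under the convention $\binom{m}{n}=0$.

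I expect the only real obstacle to be this last piece of bookkeeping: one must watch that the index $i$ reaches only $m$ (so the $i=n$ term — and $\binom{m}{n}$ — disappears precisely when $n>m$), that the degenerate value $0^m=1$ at $m=0$ is handled with care, and that the four displayed cases partition all admissible triples $(a,m,n)$ without overlap; everything else is the substitution, one binomial expansion, and \thmref{thm power sum0}. As a structural aside, the ``modified Pascal identity'' $b_{i,j-1}+b_{i+1,j}\equiv a\,b_{i,j}$ noted before the statement is merely $\frac{k^i}{(a-k)^{j-1}}+\frac{k^{i+1}}{(a-k)^j}=\frac{a\,k^i}{(a-k)^j}$ summed over $k$, and it furnishes an alternative proof by induction on $n$ (or $m$) once the first row and column have been computed as above.
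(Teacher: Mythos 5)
Your argument is correct, but it is a genuinely different route from the paper's. The paper proves this theorem by induction on $m$: it first computes the boundary row $u_{m,0}$ directly from \thmref{thm power sum0}, obtains the column $u_{0,n}$ from the symmetry $u_{0,n}\equiv u_{p-1-n,0}$, handles the $p-1$ row and column via \eqref{eqn FLT}, and then propagates through the interior using the recurrence $u_{m,n}+u_{m+1,n+1}=a\,u_{m,n+1}$ (exactly the "modified Pascal identity" you mention in your closing aside). You instead substitute $j\equiv a-k$, expand $(a-j)^m$ binomially, and reduce everything to \thmref{thm power sum0}, observing that $p-1\mid i-n$ with $0\le i\le m\le p-1$ and $0\le n\le p-1$ forces $i-n\in\{0,\pm(p-1)\}$; the tally of these at most three surviving contributions, plus the telescoping removed-point term $a^{m-n}(1-1)^m$, reproduces the four cases exactly (I checked the corner bookkeeping, including why $(m,n)=(p-1,0)$ lands in the $-2$ case rather than the generic formula, and it is right). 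Your method is more direct — no induction and no separate treatment of boundary rows — and it is essentially the same technique the paper deploys later for the three-term and $n$-term sums (\thmref{thm new power sum3}, \thmref{thm general power1}), so it unifies the two-term case with those generalizations. What the paper's inductive route buys is the structural recurrence itself, which explains the Pascal-like pattern visible in the computational tables that motivate the theorem. The only points needing care in your write-up are the ones you already flag: interpreting $j^{i-n}$ for negative exponents via \eqref{eqn FLT} (legitimate since $j\not\equiv 0$), and the trivial $p=2$ case where the sum is empty and $-2\equiv 0$.
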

\begin{proof}
Let $u_{m,n}=\sum_{\substack{k=1 \\ k \neq a}}^{p-1} \frac{k^m}{(a-k)^n}$.

We first consider the case $n=0$. 
\begin{equation*}\label{eqn new sum1a}
\begin{split}
u_{m,0}=&\sum_{\substack{k=1 \\ k \neq a}}^{p-1} k^m= - a^m+\sum_{k=1}^{p-1} k^m  \equiv
\begin{cases} -a^m \, \, \, mod \, \, p, & \text{if $p-1 \nmid m$}\\
-1-a^m\, \, \, mod \, \, p, & \text{if $p-1 \mid m$}
\end{cases}, \quad \text{by \thmref{thm power sum0}}.
\end{split}
\end{equation*}
Therefore, by \eqref{eqn FLT}
\begin{equation}\label{eqn new sum1aa}
\begin{split}
u_{m,0} \equiv
\begin{cases} 
-a^m\, \, \, mod \, \, p, & \text{if $m \in \{1, \, 2, \, \ldots, \, p-2\}$}\\
-2 \, \, \, mod \, \, p, & \text{if $m=0$ or $m=p-1$}\\
\end{cases}.
\end{split}
\end{equation}
Since $\{ a-k \mid k=1, \, 2, \, \ldots, \, p-1 \text{ and $k \neq a$} \} \equiv \{ k \mid k=1, \, 2, \, \ldots, \, p-1 \text{ and $k \neq a$} \} \, \, \, mod \, \, p$, we have
\begin{equation}\label{eqn new sum1aaa}
\begin{split}
u_{0,n} \equiv u_{p-1-n,0} \, \, \, mod \, \, p.
\end{split}
\end{equation}
By \eqref{eqn FLT}, for any integers $m$ and $n$ we have 
\begin{equation}\label{eqn new sum1b}
\begin{split}
u_{p-1,n} \equiv u_{0,n} \, \, \, mod \, \, p \quad \text{and} \quad u_{m,p-1} \equiv u_{m,0} \, \, \, mod \, \, p.
\end{split}
\end{equation}
In particular, we have $u_{p-1,p-1} \equiv u_{0,0} \equiv -2 \, \, \, mod \, \, p$, $u_{0,p-1} \equiv u_{0,0} \equiv -2 \, \, \, mod \, \, p$,  $u_{p-1,0}  \equiv  u_{0,0}   \equiv -2$. Moreover, by \eqref{eqn new sum1aaa} and \eqref{eqn new sum1aa}
\begin{equation}\label{eqn new sum1bb}
\begin{split}
u_{0,n} \equiv
\begin{cases} 
-a^{p-1-n}\, \, \, mod \, \, p, & \text{if $n \in \{1, \, 2, \, \ldots, \, p-2\}$}\\
-2 \, \, \, mod \, \, p, & \text{if $n=0$ or $n=p-1$}\\
\end{cases}.
\end{split}
\end{equation}
Note that \eqref{eqn new sum1b} is consistent with the congruences given in the theorem. Namely, we have the following congruences modulo $p$:

$u_{p-1,n}  \equiv  (-1)^{n+1}a^{p-1-n} \binom{p-1}{n} \equiv -a^{p-1-n} \equiv u_{0,n}$ for each $n \in \{1, \ 2, \, \ldots, \, p-2\}$.

$u_{m,p-1}  \equiv -a^{m} \equiv (-1)^{0+1}a^{m-0} \binom{m}{0} \equiv u_{m,0}$ for each $m \in \{1, \ 2, \, \ldots, \, p-2\}$.

To complete the proof of the theorem, we only need to consider the cases $1 \leq m \leq p-2$ and $1 \leq n \leq p-2$. 
We prove these cases by induction on $m$. For such values of $m$ and $n$ we need to show that $u_{m,n} \equiv (-1)^{m+n}a^{m-n} \binom{m}{n}$. We first need an identity which is a modified version of Pascal's Identity of binomial coefficients.

\textbf{Claim:} For any non-negative integers $n$ and $m$ we have 
\begin{equation}\label{eqn new sum1c}
\begin{split}
u_{m,n}+u_{m+1,n+1}=a \cdot u_{m,n+1}.
\end{split}
\end{equation}
\textbf{Proof of the Claim:} By the definition of $u_{m,n}$,
\begin{equation*}\label{eqn claim1}
\begin{split}
u_{m,n}+u_{m+1,n+1}&=\sum_{\substack{k=1 \\ k \neq a}}^{p-1} \frac{k^m}{(a-k)^n}+\sum_{\substack{k=1 \\ k \neq a}}^{p-1} \frac{k^{m+1}}{(a-k)^{n+1}}=\sum_{\substack{k=1 \\ k \neq a}}^{p-1} \frac{(a-k)k^m+k^{m+1}}{(a-k)^{n+1}}\\
&=a \sum_{\substack{k=1 \\ k \neq a}}^{p-1} \frac{k^{m}}{(a-k)^{n+1}}=a \cdot u_{m,n+1}.
\end{split}
\end{equation*}

Setting $m=0$ in \eqref{eqn new sum1c} gives $u_{1,n+1}=a \cdot u_{0,n+1}-u_{0,n}$ for each integer $n \in \{0, \ 1, \, \ldots, \, p-3\}$. Thus,  we use \eqref{eqn new sum1bb} to obtain $u_{1,n+1} \equiv a \cdot (-a^{p-n-2})+a^{p-n-1}=0$ for $1 \leq n \leq p-3$ and $u_{1,1} = a \cdot u_{0,1}-u_{0,0} \equiv a(-a^{p-2})+2 \equiv 1$ for $n=0$. On the other hand, 
$(-1)^{n+1}a^{1-n} \binom{1}{n} \equiv 0$ if  $2 \leq n \leq p-2$, and $(-1)^{n+1}a^{1-n} \binom{1}{n} \equiv 1$ if  $n=1$.
Thus, when $m=1$, we obtain $u_{m,n} \equiv (-1)^{n+1}a^{m-n} \binom{m}{n}$ for each  $1 \leq n \leq p-2$. Here, we have congruences modulo $p$.

We assume that $u_{m,n} \equiv (-1)^{n+1}a^{m-n} \binom{m}{n} \, \, \, mod \, \, p$ for each  $1 \leq n \leq p-2$ holds for every integer $m=j$ with $1 \leq j \leq p-3$. Next, we prove this for $m=j+1$.
\begin{equation*}\label{eqn new sum2a0}
\begin{split}
u_{j+1,1} &=  a \cdot u_{j,1}-u_{j,0}, \quad \text{by \eqref{eqn new sum1c}} \\ 
&\equiv a \cdot \big( a^{j-1} \binom{j}{1} \big) -u_{j,0} \, \, \, mod \, \, p, \quad \text{by the induction assumption}\\
&\equiv a^{j}\binom{j}{1}+a^{j} \binom{j}{0}\, \, \, mod \, \, p, \quad \text{by \eqref{eqn new sum1aa}}\\
&=a^{j} \binom{j+1}{1} \quad \text{by Pascal's Identity}.
\end{split}
\end{equation*}
Let $n \in \{1, \ 2, \, \ldots, \, p-3\}$.

\begin{equation*}\label{eqn new sum2a}
\begin{split}
u_{j+1,n+1} & =  a \cdot u_{j,n+1}-u_{j,n}, \quad \text{by \eqref{eqn new sum1c}} \\
& \equiv a \cdot \big( (-1)^{n+2} a^{j-n-1} \binom{j}{n+1} \big) -(-1)^{n+1} a^{j-n} \binom{j}{n}\, \, \, mod \, \, p, \quad \text{by the assumptions} \\
& = (-1)^{n+2} a^{j-n}\big(  \binom{j}{n+1}+\binom{j}{n} \big)\\
& = (-1)^{n+2} a^{j-n} \binom{j+1}{n+1}  \quad \text{by Pascal's Identity}.
\end{split}
\end{equation*}
Thus, we proved what we wanted for $m=j+1$. Hence, $u_{m,n} \equiv (-1)^{n+1}a^{m-n} \binom{m}{n} \, \, \, mod \, \, p$ for each  $1 \leq n \leq p-2$ holds for each $1 \leq m \leq p-2$ by the induction principle.
\end{proof}

\begin{remark}\label{rem sum1}
For any integers $a, \, m \in \{0, \, 1, \, \ldots, \, p-1\}$, we have the following congruences:
$$
\sum_{\substack{k=0 \\ k \not \equiv -a, }}^{p-1} (a+k)^m \equiv \sum_{\substack{k=0 \\ k \not \equiv a}}^{p-1} (a-k)^m
\equiv \sum_{k=1}^{p-1} k^m \, \, \, mod \, \, p.
$$
Moreover, if $m \neq 0$,
$$
\sum_{\substack{k=0 \\ k \not \equiv -a}}^{p-1} (a+k)^m \equiv \sum_{k=0}^{p-1} (a+k)^m \, \, \, mod \, \, p. 
$$
\end{remark}
Next, we extend \thmref{thm new power sum1} as follows: 
\begin{theorem}\label{thm new power sum3}
Let $p$ be a prime number. For any integers $a, \, b \in \{0, \, 1, \, 2, \, \ldots, \, p-1\}$ with $a \neq b$ and $m, \, n \in \{0, \, 1, \, \ldots, \, p-1\}$, we have the following congruence modulo $p$:
\begin{equation*}\label{eqn new sum3a}
\begin{split}
\sum_{\substack{k=0 \\ k \not \equiv -a, \, -b}}^{p-1} \frac{(a+k)^m}{(b+k)^n} \equiv
\begin{cases} 
(-1)^{n+1}(a-b)^{p-1-n}, & \text{if $m=0$ and $n \in \{1, \, 2, \, \ldots, \, p-2\}$}\\
-(a-b)^m, &  \text{if $n=p-1$ and $m \in \{1, \, 2, \, \ldots, \, p-2\}$} \\
-2,  &\text{if $m, \, n \in \{ 0, \, p-1 \}$}\\
-(a-b)^{m-n}\binom{m}{n}, & \text{otherwise}.
\end{cases}
\end{split}
\end{equation*}
\end{theorem}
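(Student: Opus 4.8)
The plan is to reduce this statement to \thmref{thm new power sum1} by a single change of summation variable. Set $\ell \equiv a+k \pmod p$; since $k \mapsto a+k$ is a bijection of $\ZZ/p\ZZ$, as $k$ runs over the residues with $k \not\equiv -a$ and $k \not\equiv -b$, the new variable $\ell$ runs over the residues with $\ell \not\equiv 0$ and $\ell \not\equiv a-b$. Write $c := a-b$, which is a nonzero residue modulo $p$ because $a \neq b$ in $\{0,1,\dots,p-1\}$. Then $a+k \equiv \ell$ and $b+k \equiv \ell - c$ modulo $p$, so
\[
\sum_{\substack{k=0\\ k\not\equiv -a,\,-b}}^{p-1}\frac{(a+k)^m}{(b+k)^n}
\equiv \sum_{\substack{\ell=1\\ \ell\neq c}}^{p-1}\frac{\ell^m}{(\ell-c)^n}
\pmod p .
\]

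Now use $(\ell-c)^n = (-1)^n(c-\ell)^n$ to rewrite the right-hand side as $(-1)^n\sum_{\ell\neq c}\frac{\ell^m}{(c-\ell)^n}$, which is exactly $(-1)^n u_{m,n}$, where $u_{m,n}$ denotes the sum appearing in the proof of \thmref{thm new power sum1} with the parameter $a$ there replaced by $c=a-b$; since $c\not\equiv 0 \pmod p$, that theorem applies. It then remains to substitute its four-case formula for this $u_{m,n}$ and simplify, using $(-1)^{p-1}\equiv 1 \pmod p$ (valid for every prime: $p-1$ is even when $p$ is odd, and $-1\equiv 1\pmod 2$) together with $(-1)^{2n+1}=-1$. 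In the range $m=0$, $n\in\{1,\dots,p-2\}$ one obtains $(-1)^n\bigl(-c^{p-1-n}\bigr)=(-1)^{n+1}(a-b)^{p-1-n}$; in the range $n=p-1$, $m\in\{1,\dots,p-2\}$ one obtains $(-1)^{p-1}\bigl(-c^{m}\bigr)\equiv -(a-b)^{m}$; when $m,n\in\{0,p-1\}$ one obtains $(-1)^n(-2)\equiv -2$; and in the remaining range one obtains $(-1)^n(-1)^{n+1}c^{m-n}\binom{m}{n}=-(a-b)^{m-n}\binom{m}{n}$. Each of these matches the corresponding case of the claimed congruence.

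There is no serious obstacle here; the only points demanding care are tracking exactly which two residues are omitted after the substitution $\ell\equiv a+k$, and keeping the sign $(-1)^n$ straight through all four cases, in particular checking that the three degenerate cases of \thmref{thm new power sum1} (where $m$ or $n$ equals $0$ or $p-1$) align with the degenerate cases stated here. If one instead wanted a self-contained argument, one could copy the proof of \thmref{thm new power sum1} verbatim: writing $v_{m,n}$ for the sum in the present theorem, the identity $a+k=(a-b)+(b+k)$ yields the same Pascal-type recurrence $v_{m,n}+v_{m+1,n+1}=(a-b)\,v_{m,n+1}$, and the boundary evaluations and the induction on $m$ carry over unchanged. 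The reduction above is shorter, so that is the route I would take.
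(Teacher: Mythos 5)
Your main argument is correct, and it reaches \thmref{thm new power sum1} by a genuinely different and shorter route than the paper. The paper proves the three boundary cases directly from \thmref{thm power sum0} and \remref{rem sum1}, and for the generic case expands $(a+k)^m=\sum_i\binom{m}{i}a^{m-i}k^i$, applies \thmref{thm new power sum1} term by term (with parameter $-b$), and then recombines the resulting sum using the cancellation identity \eqref{eqn product1} and the Binomial Theorem to arrive at $-(a-b)^{m-n}\binom{m}{n}$. You instead make the single substitution $\ell\equiv a+k$, which exhibits the whole sum as $(-1)^n u_{m,n}$ for the parameter $c=a-b$ and disposes of all four cases uniformly; the bookkeeping of the omitted residues ($\ell\not\equiv 0$ and $\ell\not\equiv c$), the sign $(-1)^n$ from $(\ell-c)^n=(-1)^n(c-\ell)^n$, and the use of $(-1)^{p-1}\equiv 1$ are all handled correctly, and the case boundaries of the two theorems do line up as you claim. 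What your approach buys is brevity and the explicit observation that the sum depends only on $a-b$ (translation invariance), which the paper only recovers implicitly at the end; what the paper's longer route buys is the intermediate identities (e.g.\ the use of the cancellation identity) that it reuses later in \secref{sec triple}.

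One small correction to your closing aside: for $v_{m,n}=\sum\frac{(a+k)^m}{(b+k)^n}$ the identity $(a+k)-(b+k)=a-b$ gives $v_{m+1,n+1}-v_{m,n}=(a-b)\,v_{m,n+1}$, not $v_{m,n}+v_{m+1,n+1}=(a-b)\,v_{m,n+1}$; the sign flip relative to \eqref{eqn new sum1c} comes from the denominator being $(b+k)$ rather than $(a-k)$, and it is exactly what turns the answer $(-1)^{n+1}(a-b)^{m-n}\binom{m}{n}$ into $-(a-b)^{m-n}\binom{m}{n}$. So the self-contained induction would not carry over ``unchanged.'' This does not affect your main proof, which does not use that recurrence.
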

\begin{proof}
If $m \in \{ 0, \, p-1 \}$ and 
$1 \leq n \leq p-2$,
we have the following congruences modulo $p$:
\begin{equation}\label{eqn new sum3b}
\begin{split}
\sum_{\substack{k=0 \\ k \not \equiv -a, \, -b}}^{p-1} \frac{(a+k)^m}{(b+k)^n} 
& \equiv  \sum_{\substack{k=0 \\ k \not \equiv -a, \, -b}}^{p-1} \frac{1}{(b+k)^n}, \quad \text{as $(a+k)^0= 1$  and 
$(a+k)^{p-1}= 1$ by \eqref{eqn FLT}}\\
&\equiv \sum_{\substack{k=0 \\ k \not \equiv -a, \, -b}}^{p-1} (b+k)^{p-1-n} \\
&\equiv -(b-a)^{p-1-n}+\sum_{\substack{k=0 \\ k \not \equiv -b}}^{p-1} (b+k)^{p-1-n}\\
&\equiv -(b-a)^{p-1-n}+\sum_{k=1}^{p-1} k^{p-1-n}, \quad \text{by \remref{rem sum1}} \\
& \equiv -(b-a)^{p-1-n}, \quad \text{by \thmref{thm power sum0}}
\end{split}
\end{equation}
Since $-(b-a)^{p-1-n}=(-1)^{n+1}(a-b)^{p-1-n}$, the first congruence in the theorem holds. Moreover,
$(-1)^{n+1}(a-b)^{p-1-n}=-(a-b)^{p-1-n}\binom{p-1}{n}$. Thus, when $m=p-1$ and $n \in \{1, \, 2, \, \ldots, \, p-2\}$
the last congruence in the theorem holds for such $m$ and $n$.

Suppose $n \in \{0, \, p-1 \}$ and $m \in \{1, \, 2, \, \ldots, \, p-2\}$. Then
\begin{equation*}\label{eqn new sum3c}
\begin{split}
\sum_{\substack{k=0 \\ k \not \equiv -a, \, -b}}^{p-1} \frac{(a+k)^m}{(b+k)^n} & \equiv 
\sum_{\substack{k=0 \\ k \not \equiv -a, \, -b}}^{p-1} (a+k)^m \, \, \, mod \, \, p, \quad \text{since $(b+k)^{p-1} \equiv 1$ by \eqref{eqn FLT} and $(b+k)^{0}=1$}\\
& = -(a-b)^m + \sum_{\substack{k=0 \\ k \not \equiv -a}}^{p-1} (a+k)^m \\
& \equiv -(a-b)^m + \sum_{k=1}^{p-1} k^m \, \, \, mod \, \, p, \quad \text{by \remref{rem sum1}}\\
& \equiv -(a-b)^m \, \, \, mod \, \, p, \quad \text{by \thmref{thm power sum0}}
\end{split}
\end{equation*}
This proves the second congruence in the theorem. Note that when $n=0$ and $m \in \{1, \, 2, \, \ldots, \, p-2\}$
the last congruence in the theorem also holds  for such $m$ and $n$.

If $p \nmid s$ for some integer $s$, then $s^0= 1$ and $s^{p-1} \equiv 1$ by \eqref{eqn FLT}. Therefore, when $m, \, n \in \{ 0, \, p-1 \}$ we have
\begin{equation}\label{eqn new sum3d}
\begin{split}
\sum_{\substack{k=0 \\ k \not \equiv -a, \, -b}}^{p-1} \frac{(a+k)^m}{(b+k)^n} \equiv \sum_{\substack{k=0 \\ k \not \equiv -a, \, -b}}^{p-1} 1 \equiv -2 \, \, \, mod \, \, p.
\end{split}
\end{equation}

For the remaining cases, the following congruences modulo $p$ holds:
\begin{equation*}\label{eqn new sum3e}
\begin{split}
\sum_{\substack{k=0 \\ k \not \equiv -a, \, -b}}^{p-1} \frac{(a+k)^m}{(b+k)^n} & \equiv
\sum_{\substack{k=0 \\ k \not \equiv -b}}^{p-1} \frac{(a+k)^m}{(b+k)^n}\\
& \equiv \sum_{\substack{k=0 \\ k \not \equiv -b}}^{p-1} \frac{1}{(b+k)^n} \Big( a^m +\sum_{i=1}^{m} \binom{m}{i}a^{m-i}k^i \Big), \quad \text{by Binomial Theorem}\\ 
&\equiv a^m \sum_{\substack{k=0 \\ k \not \equiv -b}}^{p-1} \frac{1}{(b+k)^n}+ \sum_{i=1}^{m} \binom{m}{i}a^{m-i}  \sum_{\substack{k=0 \\ k \not \equiv -b}}^{p-1}  \frac{k^i}{(b+k)^n} \\
&\equiv a^m \sum_{\substack{k=0 \\ k \not \equiv -b}}^{p-1} (b+k)^{p-1-n}+  \sum_{i=1}^{m} \binom{m}{i}a^{m-i}  (-1)^n \sum_{\substack{k=0 \\ k \not \equiv -b}}^{p-1}  \frac{k^i}{(-b-k)^n}\\
&\equiv \sum_{i=1}^{m} \binom{m}{i}a^{m-i}  (-1)^n \sum_{\substack{k=0 \\ k \not \equiv -b}}^{p-1}  \frac{k^i}{(-b-k)^n},
\quad \text{by \eqref{eqn new sum3b}}\\
&\equiv \sum_{i=1}^{m} \binom{m}{i}a^{m-i} \big( -(-b)^{i-n} \binom{i}{n} \big),
\quad \text{by \thmref{thm new power sum1}}
\end{split}
\end{equation*}
\begin{equation*}\label{eqn new sum3ee}
\begin{split}
&\equiv -\sum_{i=n}^{m} \binom{m}{i} \binom{i}{n} a^{m-i} (-b)^{i-n}, \quad \text{as $\binom{n}{i}=0$ if $i<n$}\\
&\equiv -\sum_{i=0}^{m-n} \binom{m}{i+n} \binom{i+n}{n} a^{m-n-i} (-b)^{i}\\
&\equiv -\sum_{i=0}^{m-n} \binom{m}{n} \binom{m-n}{i} a^{m-n-i} (-b)^{i},
\quad \text{by the cancellation identity} \\
&\equiv -(a-b)^{m-n}\binom{m}{n}, \quad \text{by Binomial Theorem}
\end{split}
\end{equation*}
This completes the proof of the theorem.
\end{proof}
An easy consequence of \thmref{thm new power sum1} and \thmref{thm new power sum3} can be given as follows:
\begin{corollary}\label{cor new power sum3}
Let $p$ be a prime number, and let  $a, \, b \in \{0, \, 1, \, 2, \, \ldots, \, p-1\}$ with $a \neq b$. 
For any integers $m, \, n \in \{1, \, 2, \, \ldots, \, p-2\}$ with $m < n$, we have 
\begin{equation*}\label{eqn new sum3ac}
\begin{split}
\sum_{\substack{k=1 \\ k \not \equiv a}}^{p-1} \frac{k^m}{(a-k)^n} \equiv 0 \, \, \, mod \, \, p, \quad \quad
\sum_{\substack{k=0 \\ k \not \equiv -a, \, -b}}^{p-1} \frac{(a+k)^m}{(b+k)^n} \equiv 0 \, \, \, mod \, \, p.
\end{split}
\end{equation*}
\end{corollary}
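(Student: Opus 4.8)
The plan is simply to specialize the two preceding results; no new idea is needed. Fix $a \in \{1,\dots,p-1\}$ (respectively $a,b \in \{0,\dots,p-1\}$ with $a \neq b$) and take $m,n \in \{1,\dots,p-2\}$. Then the pair $(m,n)$ does not fall into any of the first three cases listed in \thmref{thm new power sum1} or in \thmref{thm new power sum3}: indeed $m \neq 0$, $n \neq p-1$, and $m,n \notin \{0,p-1\}$. Hence the ``otherwise'' branch of each theorem applies, and I read off
\[
\sum_{\substack{k=1 \\ k \not\equiv a}}^{p-1} \frac{k^m}{(a-k)^n} \equiv (-1)^{n+1} a^{m-n} \binom{m}{n}, \qquad
\sum_{\substack{k=0 \\ k \not\equiv -a,\,-b}}^{p-1} \frac{(a+k)^m}{(b+k)^n} \equiv -(a-b)^{m-n} \binom{m}{n}
\]
modulo $p$.

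Now I invoke the hypothesis $m < n$: since the lower index $m$ is strictly smaller than the upper index $n$, the binomial coefficient $\binom{m}{n}$ vanishes, so both right-hand sides are $\equiv 0 \pmod p$, which is exactly the asserted congruences. The one place where a word of care is warranted is that the factors $a^{m-n}$ and $(a-b)^{m-n}$ carry a negative exponent; this is harmless because $a \not\equiv 0$ and $a \not\equiv b \pmod p$ make them well-defined elements of $(\ZZ/p\ZZ)^\times$ (equivalently, one may reduce the exponent $m-n$ modulo $p-1$ via \eqref{eqn FLT}), and in any event they are multiplied by $0$. There is no real obstacle here: all of the work has already been carried out in \thmref{thm new power sum1} and \thmref{thm new power sum3}, and this corollary is just the observation that the binomial coefficient appearing in their conclusions is zero precisely when $m<n$.
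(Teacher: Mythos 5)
Your proof is correct and matches the paper's intent exactly: the paper gives no explicit argument, presenting the corollary as an ``easy consequence'' of \thmref{thm new power sum1} and \thmref{thm new power sum3}, and the intended reasoning is precisely yours — for $m,n\in\{1,\dots,p-2\}$ only the ``otherwise'' branch applies, and $\binom{m}{n}=0$ since $m<n$. Your remark about the negative exponents being harmless (as units mod $p$ multiplied by zero) is a sensible extra check.
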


\begin{remark}\label{rem sum2}
Let $a=b$, and let $m, \, n \in \{1, \, 2, \, \ldots, \, p-1\}$.

If $m > n$, then 
\begin{equation}\label{eqn rem sum2a}
\begin{split}
\sum_{\substack{k=0 \\ k \not \equiv -a, \, -b}}^{p-1} \frac{(a+k)^m}{(b+k)^n} \equiv \sum_{\substack{k=0 \\ k \not \equiv -a}}^{p-1} (a+k)^{m-n} \equiv \sum_{k=1}^{p-1} k^{m-n}.
\end{split}
\end{equation}	
If $m < n$, then 
\begin{equation}\label{eqn rem sum2b}
\begin{split}
\sum_{\substack{k=0 \\ k \not \equiv -a, \, -b}}^{p-1} \frac{(a+k)^m}{(b+k)^n} \equiv \sum_{\substack{k=0 \\ k \not \equiv -a}}^{p-1} \frac{1}{(a+k)^{n-m}} \equiv \sum_{\substack{k=0 \\ k \not \equiv -a}}^{p-1} (a+k)^{p-1+m-n} \equiv
\sum_{k=1}^{p-1} k^{p-1+m-n}.
\end{split}
\end{equation}
If $m = n$, then 
\begin{equation}\label{eqn rem sum2c}
\begin{split}
\sum_{\substack{k=0 \\ k \not \equiv -a, \, -b}}^{p-1} \frac{(a+k)^m}{(b+k)^n} \equiv \sum_{\substack{k=0 \\ k \not \equiv -a}}^{p-1} 1 \equiv -1.
\end{split}
\end{equation}																								
\end{remark}
Next, we consider the sums of products of the residue terms $(a+k)^m$ and $k^n$: 
\begin{theorem}\label{thmcor new power sum3}
Let $p$ be a prime number. For any integers $a, \, m, \, n \in \{1, \, 2, \, \ldots, \, p-1\}$, we have the following congruence modulo $p$:
\begin{equation*}\label{eqn thmcor new sum3a}
\begin{split}
\sum_{k=0}^{p-1} (a+k)^m k^n \equiv
\begin{cases} 
-2,  &\text{if $m=n=p-1$}\\
-a^{m+n-p+1}\binom{m}{p-1-n}, & \text{otherwise}.
\end{cases}
\end{split}
\end{equation*}
\end{theorem}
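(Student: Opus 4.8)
The plan is to expand $(a+k)^m$ by the binomial theorem and reduce the whole expression to ordinary power sums governed by \thmref{thm power sum0}. Writing $(a+k)^m=\sum_{i=0}^{m}\binom{m}{i}a^{m-i}k^i$ and interchanging the two summations gives
\[
\sum_{k=0}^{p-1}(a+k)^m k^n=\sum_{i=0}^{m}\binom{m}{i}a^{m-i}\sum_{k=0}^{p-1}k^{i+n}.
\]
Since $n\geq 1$, we have $i+n\geq 1$ for every $i$, so $0^{i+n}=0$ and $\sum_{k=0}^{p-1}k^{i+n}=\sum_{k=1}^{p-1}k^{i+n}$, which by \thmref{thm power sum0} is $\equiv -1\pmod p$ when $p-1\mid i+n$ and $\equiv 0\pmod p$ otherwise. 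Thus only the indices $i$ with $p-1\mid i+n$ survive.

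Next I would pin down those surviving indices. Because $1\leq n\leq p-1$ and $0\leq i\leq m\leq p-1$, the integer $i+n$ lies in the interval $[1,\,2(p-1)]$, so $p-1\mid i+n$ forces $i+n=p-1$ or $i+n=2(p-1)$. The first equation has the unique solution $i=p-1-n\geq 0$, which lies in $\{0,1,\ldots,m\}$ exactly when $p-1-n\leq m$. The second equation has solution $i=2(p-1)-n$, and $i\leq m\leq p-1$ can hold only if $n\geq p-1$, i.e. $n=p-1$; then $i=p-1$, which lies in $\{0,1,\ldots,m\}$ only if $m=p-1$. Hence the index $i=2(p-1)-n$ contributes if and only if $m=n=p-1$, and in that case the index $i=p-1-n=0$ also contributes.

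In the generic case (not $m=n=p-1$), only $i=p-1-n$ can contribute, and it does so precisely when $p-1-n\leq m$, giving
\[
\sum_{k=0}^{p-1}(a+k)^m k^n\equiv -\binom{m}{p-1-n}a^{m-(p-1-n)}=-a^{m+n-p+1}\binom{m}{p-1-n}\pmod p.
\]
When $p-1-n>m$ the argument gives $0$, which is consistent since then $\binom{m}{p-1-n}=0$; note also that whenever $\binom{m}{p-1-n}\neq 0$ one has $m+n-p+1\geq 0$, so the right-hand side really is a nonnegative power of $a$. In the exceptional case $m=n=p-1$, the two indices $i=0$ and $i=p-1$ contribute, producing $-\binom{p-1}{0}a^{p-1}-\binom{p-1}{p-1}a^{0}\equiv -1-1=-2\pmod p$ by \eqref{eqn FLT} (using $p\nmid a$). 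This gives both cases of the theorem.

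The computation is essentially mechanical; the only place that needs genuine care is the range analysis of $i+n$, which isolates the single exceptional pair $m=n=p-1$, together with the bookkeeping check that the exponent $m+n-p+1$ in the stated formula is never negative when the accompanying binomial coefficient is nonzero.
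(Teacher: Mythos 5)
Your proof is correct, but it follows a genuinely different route from the paper's. The paper proves this theorem by rewriting $\sum_{k=0}^{p-1}(a+k)^m k^n$ as the ratio sum $\sum_{k=1,\,k\not\equiv -a}^{p-1}(a+k)^m/k^{p-1-n}$ via Fermat's little theorem and then invoking \thmref{thm new power sum3}, whose proof in turn rests on the inductive argument with the modified Pascal identity in \thmref{thm new power sum1}. You instead expand $(a+k)^m$ by the binomial theorem, interchange the sums, and reduce everything to \thmref{thm power sum0}; your range analysis correctly isolates $i=p-1-n$ as the only surviving index except when $m=n=p-1$, where $i=0$ and $i=p-1$ both contribute and give $-2$, and your remarks about $\binom{m}{p-1-n}=0$ covering the case $p-1-n>m$ and about the exponent $m+n-p+1$ being nonnegative whenever the binomial coefficient is nonzero dispose of the remaining edge cases. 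Your method is in fact the same technique the paper deploys later for the three-factor and $n$-factor versions (\thmref{thm triple power1} and \thmref{thm general power1}), so it is entirely in the spirit of the paper; what it buys is a short, self-contained argument depending only on \thmref{thm power sum0} and \eqref{eqn FLT}, whereas the paper's one-line reduction buys economy by reusing the two-term ratio machinery already established in \secref{sec sumterm2}.
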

\begin{proof}
We can rewrite the given sum as follows:
\begin{equation}\label{eqn cor new sum3b}
\begin{split}
\sum_{k=0}^{p-1} (a+k)^m k^n \equiv \sum_{\substack{k=1 \\ k \not \equiv -a}}^{p-1} \frac{(a+k)^m}{k^{p-1-n}} \, \, \, mod \, \, p.
\end{split}
\end{equation}
Then the result follows from \thmref{thm new power sum3} as $-a^m=-a^{m+n-p+1}\binom{m}{p-1-n} \, \, \, mod \, \, p$ when $n=p-1$ and $m \in \{1, \, 2, \, \ldots, \, p-2\}$.
\end{proof}
We also have the following congruence along with \eqref{eqn cor new sum3b}:
\begin{equation}\label{eqn cor new sum3c}
\begin{split}
\sum_{k=0}^{p-1} (a+k)^m k^n \equiv \sum_{\substack{k=1 \\ k \not \equiv -a}}^{p-1} \frac{k^n}{(a+k)^{p-1-m}} \, \, \, mod \, \, p.
\end{split}
\end{equation}
This is nothing but $(-1)^{m+n+1}a^{m+n-p+1}\binom{n}{p-1-m}$ by \thmref{thm new power sum3} if not both of $m$ and $n$ are $p-1$.
This, in particular, implies the following result (the cases $m, \, n \in \{ 0, \, p-1 \}$ can be checked directly):
\begin{corollary}\label{cor cong}
Let $p$ be a prime number, and let $m, \, n \in \{0, \, 1, \, \ldots, \, p-1\}$. For $M=m+n-(p-1)$, we have 
$$\binom{m}{M}=\binom{m}{p-1-n} \equiv (-1)^{m+n}\binom{n}{p-1-m} =(-1)^{m+n}\binom{n}{M} \, \, \, mod \, \, p.$$
\end{corollary}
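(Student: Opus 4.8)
The plan is to recognize the corollary as the identity obtained by evaluating a single sum in two different ways, together with a purely formal reduction.

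First I would clear the two outer equalities, which are formal. Since $M=m+n-(p-1)$ we have $M=m-(p-1-n)=n-(p-1-m)$. If $m+n<p-1$, then $M<0$ and simultaneously $p-1-n>m$ and $p-1-m>n$, so all four binomial coefficients vanish by the usual convention and there is nothing to prove; if $m+n\ge p-1$, then $0\le M\le\min(m,n)$ and $\binom{m}{M}=\binom{m}{m-M}=\binom{m}{p-1-n}$ and $\binom{n}{M}=\binom{n}{n-M}=\binom{n}{p-1-m}$ by the symmetry of binomial coefficients. So it suffices to prove the central congruence
$$\binom{m}{p-1-n}\equiv(-1)^{m+n}\binom{n}{p-1-m}\pmod p.$$

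Next I would dispose of the degenerate cases. If $m=0$, then $\binom{0}{p-1-n}$ is $1$ for $n=p-1$ and $0$ otherwise, while the right side is $(-1)^{p-1}\binom{p-1}{p-1}\equiv1$ for $n=p-1$ and $(-1)^{n}\binom{n}{p-1}=0$ for $n\le p-2$; the case $n=0$ is symmetric, and $m=n=p-1$ is trivial. (This also settles $p=2$.) For the remaining range $m,n\in\{1,\dots,p-1\}$ with $(m,n)\ne(p-1,p-1)$, I would evaluate $S=\sum_{k=0}^{p-1}(1+k)^m k^n$ two ways. By \eqref{eqn cor new sum3b} with $a=1$ we get $S\equiv\sum_{k}\frac{(1+k)^m}{k^{p-1-n}}\pmod p$, and \thmref{thm new power sum3} with $(a,b)=(1,0)$ (so $a\ne b$) and exponents $(m,p-1-n)$ — which under the present restriction on $(m,n)$ lands in the ``otherwise'' branch — gives $S\equiv-\binom{m}{p-1-n}$; this is simply \thmref{thmcor new power sum3} specialized to $a=1$. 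By \eqref{eqn cor new sum3c} with $a=1$ we instead get $S\equiv\sum_{k}\frac{k^n}{(1+k)^{p-1-m}}\pmod p$, and \thmref{thm new power sum3} with $(a,b)=(0,1)$, so $a-b=-1$, gives $S\equiv-(-1)^{m+n-p+1}\binom{n}{p-1-m}$; since $p$ is odd, $m+n-p+1\equiv m+n\pmod 2$, hence $S\equiv(-1)^{m+n+1}\binom{n}{p-1-m}$. Equating the two evaluations of $S$ and cancelling the common sign yields $\binom{m}{p-1-n}\equiv(-1)^{m+n}\binom{n}{p-1-m}\pmod p$, which finishes the proof.

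I expect essentially no genuine obstacle: the whole content is that \thmref{thmcor new power sum3}, read off \eqref{eqn cor new sum3b} on one hand and off \eqref{eqn cor new sum3c} on the other, furnishes two closed forms for the same residue. The only points requiring a little care are the parity bookkeeping $(-1)^{m+n-p+1}=(-1)^{m+n}$ (legitimate because $p$ is odd) and the verification that, for $m,n\ge1$ not both equal to $p-1$, the exponents handed to \thmref{thm new power sum3} really land in its ``otherwise'' case with $a\ne b$. As a quicker alternative that avoids summation entirely, one can put $k=m$ and $s=p-1-n$ in the first congruence of \eqref{eqn thms CO}, obtaining $\binom{m}{p-1-n}\equiv(-1)^{m+p-1-n}\binom{n}{m+n-p+1}\pmod p$; since $p-1$ is even this equals $(-1)^{m+n}\binom{n}{M}$, and together with the outer equalities above this is precisely the claim.
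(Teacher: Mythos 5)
Your proposal is correct and matches the paper's own argument: the paper likewise obtains the congruence by equating the two closed forms for $\sum_{k=0}^{p-1}(a+k)^m k^n$ coming from \eqref{eqn cor new sum3b} and \eqref{eqn cor new sum3c} via \thmref{thm new power sum3}, checking the cases $m,n\in\{0,p-1\}$ directly (your specialization to $a=1$ is immaterial since the factor $a^{m+n-p+1}$ cancels). Your closing alternative via \eqref{eqn thms CO} is also exactly the paper's remark following the corollary.
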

Note that \corref{cor cong} can also be derived from the congruences in \eqref{eqn thms CO}.

An extension of \thmref{thmcor new power sum3} is obtained by following the same strategy used to prove \thmref{thmcor new power sum3}:
\begin{theorem}\label{thmcor new power sum3 second}
Let $p$ be a prime number. For any integers $a, \, b, \, m, \, n \in \{1, \, 2, \, \ldots, \, p-1\}$ with $a \neq b$, we have the following congruence modulo $p$:
\begin{equation*}\label{eqn cor new sum3d}
\begin{split}
\sum_{k=0}^{p-1} (a+k)^m (b+k)^n \equiv
\begin{cases} 
-2,  &\text{if $m=n=p-1$}\\
-(a-b)^{m+n}\binom{m}{p-1-n}, & \text{otherwise}.
\end{cases}
\end{split}
\end{equation*}
\end{theorem}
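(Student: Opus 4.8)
The plan is to follow the strategy of the proof of \thmref{thmcor new power sum3}: convert the product $(a+k)^m(b+k)^n$ into a ratio so that \thmref{thm new power sum3} applies directly, and then reconcile exponents using Fermat's Little Theorem \eqref{eqn FLT}.

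First I would note that since $m\geq 1$ and $n\geq 1$, and $-a\not\equiv -b \pmod p$ (because $a\neq b$ with $a,b\in\{1,\dots,p-1\}$), the summands with $k\equiv -a$ or $k\equiv -b$ contribute $0$ modulo $p$, so
\[
\sum_{k=0}^{p-1}(a+k)^m(b+k)^n \equiv \sum_{\substack{k=0\\ k\not\equiv -a,\,-b}}^{p-1}(a+k)^m(b+k)^n \pmod p.
\]
For the remaining $k$ we have $b+k\not\equiv 0 \pmod p$, hence $(b+k)^{p-1}\equiv 1$ by \eqref{eqn FLT}, so $(b+k)^n \equiv (b+k)^{\,n-(p-1)} = 1/(b+k)^{\,p-1-n}$, where now $p-1-n\in\{0,1,\dots,p-2\}$. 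Thus the sum equals, modulo $p$,
\[
\sum_{\substack{k=0\\ k\not\equiv -a,\,-b}}^{p-1}\frac{(a+k)^m}{(b+k)^{\,p-1-n}},
\]
to which \thmref{thm new power sum3} applies with the roles of its parameters $(m,n)$ played by $(m,\,p-1-n)$.

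Next I would run through the cases of \thmref{thm new power sum3} under this specialization. Since $m\geq 1$, the branch with "$m=0$" is impossible; since $n\geq 1$, the value $p-1-n$ never equals $p-1$, so the branch requiring that cannot occur; the branch in which both parameters lie in $\{0,p-1\}$ forces $m=p-1$ together with $p-1-n=0$, i.e.\ $m=n=p-1$, and then \thmref{thm new power sum3} returns $-2$, matching the first case of the theorem. In every remaining case \thmref{thm new power sum3} gives
\[
-(a-b)^{\,m-(p-1-n)}\binom{m}{p-1-n} = -(a-b)^{\,m+n-(p-1)}\binom{m}{p-1-n}.
\]
Finally, since $p\nmid(a-b)$ we have $(a-b)^{p-1}\equiv 1$, so $(a-b)^{m+n}\equiv (a-b)^{\,m+n-(p-1)} \pmod p$ whenever $m+n\geq p-1$; and when $m+n<p-1$ we have $m<p-1-n$, so $\binom{m}{p-1-n}=0$ and both displayed expressions vanish. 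In either case the value is $-(a-b)^{m+n}\binom{m}{p-1-n}$, as asserted.

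I do not anticipate a genuine obstacle, since all the real content is already contained in \thmref{thm new power sum3}; the only point that needs care is the bookkeeping with the exponent $m+n-(p-1)$, which may be negative — one must observe that precisely in that range the binomial coefficient $\binom{m}{p-1-n}$ vanishes, so the a priori ill-defined negative power is harmless, and that the passage from $(a-b)^{m+n-(p-1)}$ to $(a-b)^{m+n}$ is legitimate exactly because $a\neq b$.
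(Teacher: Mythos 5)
Your proposal is correct and is exactly the approach the paper intends: the paper gives no written proof for this theorem, saying only that it follows "by the same strategy used to prove Theorem~\ref{thmcor new power sum3}," namely rewriting $(b+k)^n$ as $(b+k)^{-(p-1-n)}$ via \eqref{eqn FLT} and invoking Theorem~\ref{thm new power sum3}. Your case check (only the $m=n=p-1$ branch and the "otherwise" branch can occur, and $\binom{m}{p-1-n}=0$ when $m+n<p-1$) correctly fills in the bookkeeping the paper leaves implicit.
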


\begin{remark}\label{rem sum3}
Given $a, \, b, \, m, \, n \in \{1, \, 2, \, \ldots, \, p-1\}$, it follows from \thmref{thmcor new power sum3} and \thmref{thmcor new power sum3 second} that
$$\sum_{k=0}^{p-1} (a+k)^m (b+k)^n \equiv  \sum_{k=0}^{p-1} (a-b+k)^m k^n \, \, \, mod \, \, p.$$
Moreover, for each $c \in \{0, \, 1, \, \ldots, \, p-1\}$ we have
$$\sum_{k=0}^{p-1} (a+k)^m (b+k)^n \equiv \sum_{k=0}^{p-1} (a-c+k)^m (b-c+k)^n \, \, \, mod \, \, p.$$
\end{remark}
We can state some direct consequences of \thmref{thmcor new power sum3} and \thmref{thmcor new power sum3 second} as follows:
\begin{corollary}\label{cor sum of products}
Let $p$ be a prime number. For any integers $a, \, b, \, m, \, n \in \{1, \, 2, \, \ldots, \, p-1\}$ with $a \neq b$, 
we have
\begin{equation*}\label{eqn sum of products cor}
\begin{split}
&\text{if $m+n < p-1$,  \, \,} \sum_{k=0}^{p-1} (a+k)^m k^n \equiv 0  \, \, \, mod \, \, p, 
\quad \quad \sum_{k=0}^{p-1} (a+k)^m (b+k)^n \equiv 0 \, \, \, mod \, \, p,\\
&\text{if $m+n = p-1$,  \, \,} \sum_{k=0}^{p-1} (a+k)^m k^n \equiv -1  \, \, \, mod \, \, p, 
\quad \quad \sum_{k=0}^{p-1} (a+k)^m (b+k)^n \equiv -1 \, \, \, mod \, \, p, \\
&\text{if $m+n = p$,  \, \,} \sum_{k=0}^{p-1} (a+k)^m k^n \equiv -m a  \, \, \, mod \, \, p, 
\quad \quad \sum_{k=0}^{p-1} (a+k)^m (b+k)^n \equiv m(b-a) \, \, \, mod \, \, p.
\end{split}
\end{equation*}
\end{corollary}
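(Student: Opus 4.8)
The plan is to deduce the three displayed congruences directly from \thmref{thmcor new power sum3} and \thmref{thmcor new power sum3 second}, splitting into the regimes $m+n<p-1$, $m+n=p-1$ and $m+n=p$ according to the value taken by the binomial coefficient $\binom{m}{p-1-n}$ and by the power $a^{m+n-p+1}$ (respectively $(a-b)^{m+n}$) that occur in the ``otherwise'' branch of those theorems. Two uses of Fermat's Little Theorem \eqref{eqn FLT} are needed: the exponent $m+n$ in \thmref{thmcor new power sum3 second} is reduced via $(a-b)^{p-1}\equiv 1$ (legitimate since $a\neq b$ forces $p\nmid a-b$) and via $x^p\equiv x$, so that it matches the exponent $m+n-p+1$ of \thmref{thmcor new power sum3}. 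Finally, because $m,n\geq 1$, the exceptional clause $m=n=p-1$ of those two theorems would force $2(p-1)$ to be less than $p-1$, equal to $p-1$, or equal to $p$, which is impossible when $p\geq 3$; so for odd $p$ the ``otherwise'' branch always governs, and the residual case $p=2$ (where the only admissible triple is $m=n=1$) is handled by inspection.

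If $m+n<p-1$ then $p-1-n>m\geq 0$, hence $\binom{m}{p-1-n}=0$, and both theorems give that the two sums are $\equiv 0 \pmod p$. If $m+n=p-1$ then $p-1-n=m$, so $\binom{m}{p-1-n}=\binom{m}{m}=1$, while $m+n-p+1=0$ makes $a^{m+n-p+1}=1$ and $(a-b)^{m+n}=(a-b)^{p-1}\equiv 1$; hence both sums are $\equiv -1 \pmod p$. If $m+n=p$ then $p-1-n=m-1$, so $\binom{m}{p-1-n}=\binom{m}{m-1}=m$, while $m+n-p+1=1$ gives $a^{m+n-p+1}=a$ and $(a-b)^{m+n}=(a-b)^{p}\equiv a-b$; thus $\sum_{k=0}^{p-1}(a+k)^m k^n\equiv -ma \pmod p$ and $\sum_{k=0}^{p-1}(a+k)^m(b+k)^n\equiv -(a-b)m = m(b-a) \pmod p$, as claimed.

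There is no real obstacle here: the argument is pure bookkeeping on top of \thmref{thmcor new power sum3} and \thmref{thmcor new power sum3 second}. The only points requiring a moment's care are reconciling the two exponent conventions of those theorems through Fermat's Little Theorem and checking that their $m=n=p-1$ exception is vacuous in each of the three regimes, both of which are immediate.
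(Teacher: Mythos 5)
Your proposal is correct and follows exactly the route the paper intends: the paper presents this corollary without proof as a ``direct consequence'' of \thmref{thmcor new power sum3} and \thmref{thmcor new power sum3 second}, and your case-by-case evaluation of $\binom{m}{p-1-n}$ and of the exponents $m+n-p+1$ and $m+n$ (reduced via Fermat's Little Theorem), together with the observation that the $m=n=p-1$ exception cannot occur in any of the three regimes for $p\geq 3$, supplies precisely the missing bookkeeping.
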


\section{The sums of ratios or products of three power terms}\label{sec triple}
In this section, we consider the sums of the terms of types $(a+k)^m (b+k)^n (c+k)^s$,  $\frac{(a+k)^m (b+k)^n} {(c+k)^s}$, $\frac{(a+k)^m}{(b+k)^n (c+k)^s}$ and $\frac{1}{(a+k)^m (b+k)^n(c+k)^s}$. All congruences in this section are considered modulo a prime number $p$, even though we don't state this explicitly.
We compute these sums in more than one ways. In this way, we derive different simplified forms for these sums. As a byproduct, we obtain interesting congruences modulo $p$, such as \thmref{thm triple comp1}, \thmref{thm triple cong1}, \thmref{thm triple comp2}, \thmref{thm triple cong2}, \thmref{thm triple cong3}, \corref{cor triple comp0} and \thmref{thm triple comp3}.

When we compute the sums in this section, our method is to express them in terms of the sums considered in \secref{sec sumterm2} so that we can use the results of that section.
\begin{theorem}\label{thm triple power1}
Let $p$ be a prime number. For any integers $a, \, b, \, m, \, n, \, s \in \{1, \, 2, \, \ldots, \, p-1\}$ with $a \neq b$, we have the following congruences modulo $p$:
\begin{equation*}\label{eqn thm triple1}
\begin{split}
\sum_{k=0}^{p-1} (a+k)^m (b+k)^n k^s \equiv 
\begin{cases} 
0,  &\quad \text{if $m+n+s < p-1$}\\
-I_1,  &\quad \text{if $p-1 \leq m+n+s < 2(p-1)$}\\
-I_2-I_3,  &\quad \text{if $2(p-1) \leq m+n+s < 3(p-1)$}\\
-3,   &\quad \text{if $m=n=s=p-1$},
\end{cases}
\end{split}
\end{equation*}
where $M:=m+n+s-(p-1)$, $R:=m+n+s-2(p-1)$ and 
\begin{equation*}\label{eqn thm triple1a0}
\begin{split}
&I_1:=\sum_{j=0}^{M}\binom{m}{M-j}\binom{n}{j}a^{M-j}b^j=\sum_{j=0}^{M}\binom{m}{j}\binom{n}{M-j}a^{j}b^{M-j},\\
&I_2:=\sum_{j=M-m}^{n}\binom{m}{M-j}\binom{n}{j}a^{M-j}b^j=\sum_{j=M-n}^{m} \binom{m}{j}\binom{n}{M-j}a^{j} b^{M-j},\\
&I_3:=\sum_{j=0}^{R}\binom{m}{R-j}\binom{n}{j}a^{R-j}b^j.
\end{split}
\end{equation*}
\end{theorem}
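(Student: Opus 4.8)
\emph{Proof proposal.} Write $S=\sum_{k=0}^{p-1}(a+k)^m(b+k)^n k^s$. The plan is to expand the product $(a+k)^m(b+k)^n$ as an ordinary polynomial in $k$ and thereby reduce $S$ to the elementary power sums of \thmref{thm power sum0}. Since $s\ge 1$ the $k=0$ summand is $0$, so $S=\sum_{k=1}^{p-1}(a+k)^m(b+k)^n k^s$. Putting $(a+k)^m(b+k)^n=\sum_{i=0}^{m+n}c_i\,k^i$, where $c_i=\sum_{l+r=i,\,0\le l\le m,\,0\le r\le n}\binom{m}{l}\binom{n}{r}a^{m-l}b^{n-r}\in\ZZ$, and applying \thmref{thm power sum0} to each inner sum (legitimate since $i+s\ge 1$), one gets
\begin{equation*}
S=\sum_{i=0}^{m+n}c_i\sum_{k=1}^{p-1}k^{i+s}\equiv -\sum_{\substack{0\le i\le m+n\\ (p-1)\mid (i+s)}} c_i \pmod p .
\end{equation*}

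Next I would pin down which indices survive. As $1\le s\le p-1$ and $0\le i\le m+n\le 2(p-1)$, we have $1\le i+s\le 3(p-1)$, so $(p-1)\mid(i+s)$ forces $i\in\{\,p-1-s,\ 2(p-1)-s,\ 3(p-1)-s\,\}$. Moreover $p-1-s\in[0,m+n]$ iff $m+n+s\ge p-1$; $2(p-1)-s\in[0,m+n]$ iff $m+n+s\ge 2(p-1)$; and $3(p-1)-s\in[0,m+n]$ iff $m+n+s\ge 3(p-1)$, which, since $m,n,s\le p-1$, happens only when $m=n=s=p-1$. This reproduces the four-case skeleton: $S\equiv 0$; $S\equiv -c_{p-1-s}$; $S\equiv -(c_{p-1-s}+c_{2(p-1)-s})$; and $S\equiv -(c_0+c_{p-1}+c_{2(p-1)})$, respectively.

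It then remains to evaluate these coefficients. The substitution $l\mapsto m-l$, $r\mapsto n-r$ turns $c_{p-1-s}$ into $\sum_{u+v=M}\binom{m}{u}\binom{n}{v}a^u b^v$ with $M=m+n+s-(p-1)$, which is precisely $I_1$ (the two displayed forms of $I_1$ differ only by the relabelling $j\leftrightarrow M-j$); in the range $2(p-1)\le m+n+s<3(p-1)$ one has $M\ge p-1$, so the terms of this sum with $j<M-m$ or $j>n$ carry a vanishing binomial factor and the same quantity is written more economically as $I_2$. The identical manipulation applied to $c_{2(p-1)-s}$, with $R=m+n+s-2(p-1)$ in place of $M$, yields $c_{2(p-1)-s}=I_3$. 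Finally, for $m=n=s=p-1$ one computes $c_0=a^{p-1}b^{p-1}\equiv 1$ by \eqref{eqn FLT} and $c_{2(p-1)}=\binom{p-1}{p-1}^2=1$, while $c_{p-1}=\sum_{l=0}^{p-1}\binom{p-1}{l}^2 a^{p-1-l}b^l\equiv\sum_{l=0}^{p-1}a^{p-1-l}b^l\equiv 1\pmod p$, using $\binom{p-1}{l}\equiv(-1)^l$ (a special case of \eqref{eqn thms CO}) and $(a-b)\sum_l a^{p-1-l}b^l=a^p-b^p\equiv a-b\pmod p$ together with $a\not\equiv b$; hence $S\equiv -3$.

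The genuine content is the combinatorial bookkeeping: checking in each regime that the reindexed coefficient matches the precise summation range written for $I_1$, $I_2$, $I_3$, that the out-of-range binomials really are zero, and that the degenerate case $m=n=s=p-1$ must be split off, since there the extra index $3(p-1)-s$ supplies the additional $-1$ that distinguishes $-3$ from $-I_2-I_3$. A variant in the spirit of this section's declared method — expanding only $(b+k)^n=\sum_{i=0}^n\binom{n}{i}b^{n-i}k^i$, reducing each exponent $s+i$ modulo $p-1$ via \eqref{eqn FLT}, and then invoking \thmref{thmcor new power sum3} term by term — also works, but it requires a somewhat finer split of the range of $i$ according to whether $s+i\le p-1$.
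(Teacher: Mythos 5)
Your proof is correct and follows essentially the same route as the paper: expand $(a+k)^m(b+k)^n$ by the Binomial Theorem, reduce everything to the power sums of \thmref{thm power sum0}, and sort out which total exponents are divisible by $p-1$, the rest being reindexing of the surviving coefficients into $I_1$, $I_2$, $I_3$. The only minor divergence is the degenerate case $m=n=s=p-1$, which the paper evaluates directly as $\sum_{k \not\equiv -a,\,-b} 1 \equiv -3$ via Fermat's Little Theorem instead of computing $c_0+c_{p-1}+c_{2(p-1)}$ as you do.
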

\begin{proof}
Applying the Binomial Theorem to both of $(a+k)^m$ and $(b+k)^n$ gives
$$\sum_{k=0}^{p-1} (a+k)^m (b+k)^n k^s=\sum_{i=0}^{m}\sum_{j=0}^{n} \binom{m}{i}\binom{n}{j}a^{m-i} b^{n-j} \sum_{k=0}^{p-1} k^{i+j+s}.$$
We use \thmref{thm power sum0} to conclude that the inner sum $\sum_{k=0}^{p-1} k^{i+j+s}$ is $0$ if $(p-1) \nmid (i+j+s)$ and $-1$ otherwise. Since $3 \leq m+n+s \leq 3(p-1)$, $(p-1) \mid (i+j+s)$ means that $i+j+s$ is one of $p-1$, $2(p-1)$ or $3(p-1)$.
The last one happens only if $m=n=s=p-1$, in which case  
$\displaystyle{\sum_{k=0}^{p-1} (a+k)^m (b+k)^n k^s \equiv \sum_{\substack{k=1 \\ k \not \equiv -a, \ -b}}^{p-1}1 \equiv -3}.$ 

Since $i+j+s \leq m+n+s$, $(p-1) \nmid (i+j+s)$ when $m+n+s <p-1$. In this case, we have $\sum_{k=1}^{p-1} k^{i+j+s} \equiv 0$ by \thmref{thm power sum0}. This proves the first congruence in the Theorem.

Suppose $p-1 \leq m+n+s < 2(p-1)$.  In this case, $(p-1) \mid (i+j+s)$ iff $p-1 = i+j+s$, so we take $i=p-1-s-j$. Thus,
\begin{equation*}\label{eqn thm triple1a}
\begin{split}
&\sum_{k=0}^{p-1} (a+k)^m (b+k)^n k^s \equiv -\sum_{j=0}^{n} \binom{m}{p-1-s-j}\binom{n}{j}a^{m-(p-1-s-j)} b^{n-j}\\
&\equiv -\sum_{j=n-M}^{n} \binom{m}{p-1-s-j}\binom{n}{j}a^{m-(p-1-s-j)} b^{n-j}, \quad \text{as $p-1-s-j \leq m$ iff $j \geq n-M$}\\
&\equiv -\sum_{j=n-M}^{n} \binom{m}{m-(p-1-s-j)}\binom{n}{j}a^{m-(p-1-s-j)} b^{n-j}
\end{split}
\end{equation*}
Since $m-(p-1-s-j)=M-(n-j)$,
\begin{equation*}\label{eqn thm triple1asec}
\begin{split}
\sum_{k=0}^{p-1} (a+k)^m (b+k)^n k^s
&\equiv -\sum_{j=n-M}^{n} \binom{m}{M-(n-j)}\binom{n}{n-j}a^{M-(n-j)} b^{n-j}\\
%
&\equiv -\sum_{j=0}^{M} \binom{m}{j}\binom{n}{M-j}a^{j} b^{M-j}, \quad \text{by changing the index.}
\end{split}
\end{equation*}
This proves the second congruence in the theorem.

When $2(p-1) \leq m+n+s < 3(p-1)$, $(p-1) \mid (i+j+s)$ iff $p-1 = i+j+s$ or $2(p-1) = i+j+s$.
Since $i$ can be $p-1-s-j$ or $2(p-1)-s-j$, we have
\begin{equation*}\label{eqn thm triple1b0}
\begin{split}
\sum_{k=0}^{p-1} (a+k)^m (b+k)^n k^s &\equiv -\sum_{j=0}^{n} \binom{m}{p-1-s-j}\binom{n}{j}a^{m-(p-1-s-j)} b^{n-j}\\
& \quad \quad -\sum_{j=0}^{n} \binom{m}{2(p-1)-s-j}\binom{n}{j}a^{m-(2(p-1)-s-j)} b^{n-j}.
\end{split}
\end{equation*}
\textbf{Case 1:} Take $i=2(p-1)-s-j$
\begin{equation*}\label{eqn thm triple1b}
\begin{split}
&\sum_{j=0}^{n} \binom{m}{2(p-1)-s-j}\binom{n}{j}a^{m-(2(p-1)-s-j)} b^{n-j}\\
&\equiv \sum_{j=0}^{n} \binom{m}{m-(2(p-1)-s-j)}\binom{n}{j}a^{m-(2(p-1)-s-j)} b^{n-j}\\
&\equiv \sum_{j=0}^{n} \binom{m}{R-(n-j)}\binom{n}{n-j}a^{R-(n-j)} b^{n-j}, \quad \text{as $m-(2(p-1)-s-j)=R-(n-j)$}\\
&\equiv \sum_{j=n-R}^{n} \binom{m}{R-(n-j)}\binom{n}{n-j}a^{R-(n-j)} b^{n-j}, \quad \text{as $R-(n-j) \geq 0$ iff $j \geq n-R$}\\
&\equiv \sum_{j=0}^{R} \binom{m}{j}\binom{n}{R-j}a^{j} b^{R-j}, \quad \text{by the change of index.}
\end{split}
\end{equation*}
\textbf{Case 2:} Take $i=p-1-s-j$.
\begin{equation*}\label{eqn thm triple1c}
\begin{split}
&\sum_{j=0}^{n} \binom{m}{p-1-s-j}\binom{n}{j}a^{m-(p-1-s-j)} b^{n-j}\\
&= \sum_{j=0}^{n} \binom{m}{m-(p-1-s-j)}\binom{n}{j}a^{m-(p-1-s-j)} b^{n-j}\\
&\equiv \sum_{j=0}^{n} \binom{m}{M-(n-j)}\binom{n}{n-j}a^{M-(n-j)} b^{n-j}, \, \, \, \text{as $m-((p-1)-s-j)=M-(n-j)$}\\
&\equiv \sum_{j=0}^{n+m-M} \binom{m}{M-(n-j)}\binom{n}{n-j}a^{M-(n-j)} b^{n-j}, \quad \text{as we need  $M-(n-j) \leq m$}
\end{split}
\end{equation*}
\begin{equation*}\label{eqn thm triple1c2}
\begin{split}
&\equiv \sum_{j=M-n}^{m} \binom{m}{j}\binom{n}{M-j}a^{j} b^{M-j}, \quad \text{by the change of index.}
\end{split}
\end{equation*}
This completes the proof of the theorem.
\end{proof}
Next, we have a direct consequence of \thmref{thm triple power1}.
\begin{corollary}\label{cor triple power1}
Let $p$ be a prime number. For any integers $a, \, b, \, m, \, n, \, s \in \{1, \, 2, \, \ldots, \, p-1\}$ with $a \neq b$, we have the following congruences modulo $p$:
\begin{equation*}\label{eqn cor triple1}
\begin{split}
\sum_{k=0}^{p-1} (a+k)^m (b+k)^n k^s \equiv 
\begin{cases}
0,  &\quad \text{if $m+n+s < p-1$}\\
-1,  &\quad \text{if $m+n+s = p-1$}\\
-(ma+nb),  &\quad \text{if $m+n+s = p$}\\
- \binom{m}{2}a^2-mnab- \binom{n}{2}b^2,  &\quad \text{if $m+n+s = p+1$}\\
-3,   &\quad \text{if $m=n=s=p-1$},
\end{cases}
\end{split}
\end{equation*}
\end{corollary}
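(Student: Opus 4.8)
The plan is to read off each clause of the corollary directly from the formula of \thmref{thm triple power1}, by substituting the prescribed value of $m+n+s$. The key observation is that once $m+n+s$ is fixed, the parameter $M=m+n+s-(p-1)$ is a small explicit integer, so for the listed ranges we always land in the branch $p-1\leq m+n+s<2(p-1)$, and there the only relevant ingredient, the sum $I_1$, collapses to at most three terms that are immediately recognizable.

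Carrying this out: the case $m+n+s<p-1$ is the first branch of \thmref{thm triple power1}, giving $0$ outright, and the case $m=n=s=p-1$ is the last branch, giving $-3$; nothing further is needed for these. For $m+n+s=p-1$ we have $M=0$, so $I_1=\binom{m}{0}\binom{n}{0}a^{0}b^{0}=1$ and the sum is $-I_1\equiv-1$. For $m+n+s=p$ we have $M=1$, so $I_1=\binom{m}{1}\binom{n}{0}a+\binom{m}{0}\binom{n}{1}b=ma+nb$, giving $-(ma+nb)$. For $m+n+s=p+1$ we have $M=2$, so $I_1=\binom{m}{2}a^{2}+\binom{m}{1}\binom{n}{1}ab+\binom{n}{2}b^{2}=\binom{m}{2}a^{2}+mnab+\binom{n}{2}b^{2}$, where the convention $\binom{1}{2}=0$ makes the expression correct even when $m$ or $n$ equals $1$. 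This is exactly the stated formula, so the whole argument reduces to three one-line evaluations of short binomial sums together with quoting the extreme branches of \thmref{thm triple power1}.

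I do not expect a genuine obstacle, since all the substance sits in \thmref{thm triple power1}; the only thing demanding a moment's care is the range hypothesis, namely that the closed form $-I_1$ is valid precisely when $p-1\leq m+n+s<2(p-1)$. For $p\geq 5$ this interval contains all three values $m+n+s\in\{p-1,\,p,\,p+1\}$, which is the situation of interest, and the constraint $m+n+s\geq 3$ renders most of the small-prime instances vacuous; any remaining small case is checked by direct computation. Thus the proof is essentially pure bookkeeping layered on top of \thmref{thm triple power1}.
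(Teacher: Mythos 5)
Your proposal is correct and is exactly the paper's argument: the paper's proof consists of the single line ``Use Theorem~\ref{thm triple power1} with $M$ negative, $0$, $1$ or $2$ in the first four cases,'' and you have simply written out the evaluations of $I_1$ for $M=0,1,2$ explicitly. Your extra attention to the hypothesis $p-1\leq m+n+s<2(p-1)$ is more care than the paper itself takes (and is warranted: for $p=3$ the value $m+n+s=p+1$ falls outside that range, an edge case the paper silently ignores).
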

\begin{proof}
Use \thmref{thm triple power1} with $M$ is negative, $0$, $1$ or $2$ in the first four cases. 
\end{proof}
We want to compute $\sum_{k=0}^{p} (a+k)^m (b+k)^{n}k^s$ in another way. We need a preparatory lemma:
\begin{lemma}\label{lem triple power1a}
For any positive integers $p, \, a, \, b, \, m, \, n, \, s$, we have the following equalities:
\begin{equation*}\label{eqn lem triple1a}
\begin{split}
\sum_{k=0}^{p} (a+k)^m (b+k)^{n+s} &= \sum_{i=0}^{s} \binom{s}{i}b^{s-i} \sum_{k=0}^{p} (a+k)^m (b+k)^n k^i,\\
\sum_{k=0}^{p} (a+k)^m (b+k)^{n}k^s &=\sum_{k=0}^{p} (a+k)^m (b+k)^{n+1}k^{s-1}-b\sum_{k=0}^{p} (a+k)^m (b+k)^{n}k^{s-1}\\
&=\sum_{k=0}^{p} (a+k)^{m+1} (b+k)^{n}k^{s-1}-a\sum_{k=0}^{p} (a+k)^m (b+k)^{n}k^{s-1}.
\end{split}
\end{equation*}
\end{lemma}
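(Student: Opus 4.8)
The plan is to prove all three displayed identities directly by elementary algebraic manipulation of the summands, with no number theory required (note the lemma is stated for arbitrary positive integers $p$, not primes).

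\medskip

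\textbf{First identity.} I would start from the right-hand side and pull the finite sum over $k$ inside after applying the Binomial Theorem to $(b+k)^s = \sum_{i=0}^{s}\binom{s}{i}b^{s-i}k^{i}$. Explicitly,
\begin{equation*}
\sum_{k=0}^{p} (a+k)^m (b+k)^{n+s}
= \sum_{k=0}^{p} (a+k)^m (b+k)^n (b+k)^s
= \sum_{k=0}^{p} (a+k)^m (b+k)^n \sum_{i=0}^{s}\binom{s}{i}b^{s-i}k^{i}.
\end{equation*}
Interchanging the two finite sums (legitimate since both are finite) gives exactly
$\sum_{i=0}^{s}\binom{s}{i}b^{s-i}\sum_{k=0}^{p}(a+k)^m(b+k)^n k^{i}$, which is the claimed right-hand side. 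So the first identity is immediate once one writes $(b+k)^s = ((b+k))^s$ and expands the binomial in the single variable $k$ with shift $b$, i.e. $b+k = b + k$.

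\medskip

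\textbf{Second identity (the two-line display).} For the first equality I would use the trivial telescoping-type substitution $k = (b+k) - b$ inside one factor of $k^s = k \cdot k^{s-1}$:
\begin{equation*}
(a+k)^m (b+k)^n k^s = (a+k)^m (b+k)^n \bigl((b+k)-b\bigr) k^{s-1}
= (a+k)^m (b+k)^{n+1} k^{s-1} - b\,(a+k)^m (b+k)^n k^{s-1},
\end{equation*}
then sum over $k$ from $0$ to $p$. For the second equality I would instead write $k = (a+k) - a$, giving
$(a+k)^m (b+k)^n k^s = (a+k)^{m+1}(b+k)^n k^{s-1} - a\,(a+k)^m(b+k)^n k^{s-1}$, and sum over $k$. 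Both require $s \geq 1$, which holds since $s$ is a positive integer.

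\medskip

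There is essentially no obstacle here: every step is a one-line identity on the level of the summand, and the interchange of summation order is harmless because all sums are finite. The only thing to be careful about is bookkeeping of the index ranges (the sums are $\sum_{k=0}^{p}$ throughout, so nothing shifts), and the hypothesis $s \geq 1$ needed to split off one power of $k$; this is guaranteed by the statement. I would present the proof as three short computations, one per displayed line, each amounting to expanding a binomial or substituting $k = (b+k)-b$ or $k = (a+k)-a$ and then summing termwise.
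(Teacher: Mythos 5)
Your proposal is correct and matches the paper's own argument: the first identity is the Binomial Theorem applied to $(b+k)^s$ followed by an interchange of finite sums, and the other two are the termwise substitutions $k=(b+k)-b$ and $k=(a+k)-a$ (the paper writes the same manipulation in the reverse direction, expanding $(b+k)^{n+1}k^{s-1}=(b+k)^{n}k^{s-1}(b+k)$). No gaps.
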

\begin{proof}
Since
$\sum_{k=0}^{p} (a+k)^m (b+k)^{n+s}=\sum_{k=0}^{p} (a+k)^m (b+k)^{n}(b+k)^{s}$,
the first equality follows by applying Binomial Theorem to $(b+k)^{s}$.
The second equality is derived as below:
\begin{equation*}\label{eqn lem triple1aa}
\begin{split}
\sum_{k=0}^{p} (a+k)^m (b+k)^{n+1}k^{s-1}&=\sum_{k=0}^{p} (a+k)^m (b+k)^{n}k^{s-1}(b+k)\\
&=b\sum_{k=0}^{p} (a+k)^m (b+k)^{n}k^{s-1}+\sum_{k=0}^{p} (a+k)^m (b+k)^{n}k^{s}.
\end{split}
\end{equation*}
The third equality is similar to the second one.
\end{proof}
We can use the equalities in \lemref{lem triple power1a} to compute  $\sum_{k=0}^{p} (a+k)^m (b+k)^{n}k^s$ successively.
For example, when $s=1$ we have the following result.

\begin{theorem}\label{thm triple power2}
Let $p$ be a prime number. For any integers $a, \, b, \, m, \, n, \, s \in \{1, \, 2, \, \ldots, \, p-1\}$ with $a \neq b$, we have the following congruence modulo $p$:
\begin{equation*}\label{eqn thm triple2}
\begin{split}
\sum_{k=0}^{p-1} (a+k)^m (b+k)^n k \equiv  
\begin{cases} 
(a+b),  \quad \text{if $m=n=p-1$}\\
-2-b(a-b)^{p-2},  \quad \text{if $m=p-1$ and $n=p-2$}\\
-2+a(a-b)^{p-2},  \quad \text{if $m=p-2$ and $n=p-1$}\\
-(a-b)^{m+n+1}\binom{m}{p-n-2} +b (a-b)^{m+n}\binom{m}{p-n-1}, \, \, \, \text{otherwise}.
\end{cases}
\end{split}
\end{equation*}
\end{theorem}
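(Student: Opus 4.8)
The plan is to reduce this three–term sum to the two–term sums evaluated in \secref{sec sumterm2}. Writing $k=(b+k)-b$ gives the identity
\begin{equation*}
\sum_{k=0}^{p-1} (a+k)^m (b+k)^n k = \sum_{k=0}^{p-1} (a+k)^m (b+k)^{n+1} - b\sum_{k=0}^{p-1} (a+k)^m (b+k)^{n},
\end{equation*}
which is the $s=1$ instance of the second equality in \lemref{lem triple power1a} (restricting the range to $0\le k\le p-1$ changes each side only by a $k=p$ term, and those terms satisfy the same relation, so the identity descends mod $p$). Thus it suffices to evaluate each sum on the right with \thmref{thmcor new power sum3 second}, keeping careful track of its exceptional inputs, and then combine.

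First I would treat the case $n\le p-2$. Then both $(a+k)^m(b+k)^{n+1}$ and $(a+k)^m(b+k)^n$ have exponents in $\{1,\dots,p-1\}$, and \thmref{thmcor new power sum3 second} gives $\sum_{k=0}^{p-1}(a+k)^m(b+k)^n \equiv -(a-b)^{m+n}\binom{m}{p-1-n}$, while $\sum_{k=0}^{p-1}(a+k)^m(b+k)^{n+1} \equiv -(a-b)^{m+n+1}\binom{m}{p-n-2}$ unless $(m,n)=(p-1,p-2)$. Substituting into the displayed identity yields the ``otherwise'' expression $-(a-b)^{m+n+1}\binom{m}{p-n-2}+b(a-b)^{m+n}\binom{m}{p-n-1}$ for all such $(m,n)$ other than $(p-1,p-2)$. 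For $(m,n)=(p-1,p-2)$ the first sum is $-2$, and using $\binom{p-1}{1}\equiv-1$ and $(a-b)^{2p-3}\equiv(a-b)^{p-2}$ from \eqref{eqn FLT} the combination becomes $-2-b(a-b)^{p-2}$, the second listed value.

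Next I would treat the case $n=p-1$. Here $(b+k)^{n+1}=(b+k)^p\equiv(b+k)\pmod p$ termwise (valid also when $b+k\equiv0$), so the first sum is congruent to $\sum_{k=0}^{p-1}(a+k)^m(b+k)$, which \thmref{thmcor new power sum3 second} evaluates with exponents $m$ and $1$ (never the exceptional pair, as $p\ge3$) to $-(a-b)^{m+1}\binom{m}{p-2}$; and the second sum $\sum_{k=0}^{p-1}(a+k)^m(b+k)^{p-1}$ equals $-2$ if $m=p-1$ and $-(a-b)^{m+p-1}\equiv-(a-b)^m$ otherwise. Reading off the three sub-cases: $m=p-1$ gives $(a-b)-b(-2)=a+b$; $m=p-2$ gives $-1+b(a-b)^{p-2}$, which equals $-2+a(a-b)^{p-2}$ because $(b-a)(a-b)^{p-2}=-(a-b)^{p-1}\equiv-1$; and $m\le p-3$ gives $b(a-b)^m$, which agrees with the ``otherwise'' formula at $n=p-1$ since then $\binom{m}{p-n-2}=\binom{m}{-1}=0$ and $(a-b)^{m+p-1}\equiv(a-b)^m$ by \eqref{eqn FLT}.

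The computations themselves are short; the only genuine difficulty is the bookkeeping of the boundary inputs to \thmref{thmcor new power sum3 second} --- the values of $m$ and $n$ for which an exponent equals $p-1$, or for which bumping $n$ produces the exponent $p$ and forces a Fermat reduction --- and verifying that the single ``otherwise'' expression is simultaneously correct in the $n\le p-2$ regime and in the reduced $n=p-1$, $m\le p-3$ regime. Organizing the case tree as ($n\le p-2$) versus ($n=p-1$), and within each splitting off the exceptional values of $m$, makes this routine.
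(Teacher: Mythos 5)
Your proposal is correct and follows essentially the same route as the paper: the $s=1$ identity from \lemref{lem triple power1a} followed by \thmref{thmcor new power sum3 second}, with the boundary cases ($n=p-1$ via $(b+k)^p\equiv b+k$, and the exceptional inputs $(p-1,p-1)$ and $(p-1,p-2)$) worked out explicitly where the paper leaves them implicit. The only cosmetic difference is that the paper disposes of $m=n=p-1$ by a direct computation of $\sum_{k\not\equiv -a,-b}k$ rather than through the decomposition.
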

\begin{proof}
If $m=n=p-1$, $\displaystyle{\sum_{k=0}^{p-1} (a+k)^m (b+k)^n k \equiv \sum_{\substack{k=1 \\ k \not \equiv -a, \ -b}}^{p-1}k \equiv (a+b)}$ by \eqref{eqn FLT} and \thmref{thm power sum0}.
%
%
This proves the first congruence.
By \lemref{lem triple power1a}, 
\begin{equation*}\label{eqn thm triple20}
\begin{split}
\sum_{k=0}^{p-1} (a+k)^m (b+k)^n k &=\sum_{k=0}^{p-1} (a+k)^m (b+k)^{n+1}-b\sum_{k=0}^{p-1} (a+k)^m (b+k)^{n}\\
&=\sum_{k=0}^{p-1} (a+k)^{m+1} (b+k)^{n}-a\sum_{k=0}^{p-1} (a+k)^m (b+k)^{n}.
\end{split}
\end{equation*}
Then the result follows from \thmref{thmcor new power sum3 second}.
\end{proof}
\begin{theorem}\label{thm triple power2s2}
Let $p$ be a prime number. For any integers $a, \, b, \, m, \, n, \, s \in \{1, \, 2, \, \ldots, \, p-1\}$ with $a \neq b$, we have the following congruence modulo $p$:
\begin{equation*}\label{eqn thm triple2s2}
\begin{split}
\sum_{k=0}^{p-1} (a+k)^m (b+k)^n k^2 \equiv  
\begin{cases} 
-(a^2+b^2),  \quad \text{if $m=n=p-1$}\\
2b+a^2(a-b)^{p-2},  \quad \text{if $m=p-1$ and $n=p-2$}\\
2a-b^2(a-b)^{p-2},  \quad \text{if $m=p-2$ and $n=p-1$}\\
-1+2ab(a-b)^{p-3},  \quad \text{if $m=n=p-2$}\\
J, \quad \text{otherwise},
\end{cases}
\end{split}
\end{equation*}
where 
$$J:=-(a-b)^{m+n+2} \binom{m}{p-n-3}+2b(a-b)^{m+n+1}\binom{m}{p-n-2}-b^2(a-b)^{m+n}\binom{m}{p-n-1}.$$
\end{theorem}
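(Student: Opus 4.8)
The plan is to push the proof of \thmref{thm triple power2} one step further: reduce the power of $k$ from $2$ to $1$ using \lemref{lem triple power1a}, apply the already-proved $s=1$ result \thmref{thm triple power2} to each resulting sum, and combine. Concretely, the second equality of \lemref{lem triple power1a}, specialized to $s=2$, gives
\begin{equation*}
\sum_{k=0}^{p-1}(a+k)^m(b+k)^n k^2 \equiv \sum_{k=0}^{p-1}(a+k)^m(b+k)^{n+1}k - b\sum_{k=0}^{p-1}(a+k)^m(b+k)^n k \, \, \, mod \, \, p,
\end{equation*}
and both sums on the right are covered by \thmref{thm triple power2} (in the first, if $n+1=p$ one first replaces $(b+k)^{p}$ by $(b+k)$ using \eqref{eqn FLT}).

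In the generic range --- when neither resulting sum is one of the exceptional cases of \thmref{thm triple power2} --- the ``otherwise'' formula of that theorem, applied with second exponent $n+1$ and then $n$, gives
\begin{equation*}
\sum_{k=0}^{p-1}(a+k)^m(b+k)^{n+1}k \equiv -(a-b)^{m+n+2}\binom{m}{p-n-3}+b(a-b)^{m+n+1}\binom{m}{p-n-2}
\end{equation*}
together with $\sum_{k}(a+k)^m(b+k)^n k\equiv-(a-b)^{m+n+1}\binom{m}{p-n-2}+b(a-b)^{m+n}\binom{m}{p-n-1}$. Substituting these into the displayed identity, the two copies of $(a-b)^{m+n+1}\binom{m}{p-n-2}$ combine with total coefficient $2b$, and one reads off exactly $J$ with no further binomial identity needed.

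It then remains to treat the exceptional pairs. For $m=n=p-1$ the sum collapses, via \eqref{eqn FLT}, to $\sum_{k=1}^{p-1}k^2-a^2-b^2\equiv-(a^2+b^2)$ by \thmref{thm power sum0}. For $(m,n)\in\{(p-1,p-2),(p-2,p-1),(p-2,p-2)\}$ one runs the same two-term decomposition, but now one or both of the sums $\sum_k(a+k)^m(b+k)^{n+1}k$, $\sum_k(a+k)^m(b+k)^n k$ is an exceptional case of \thmref{thm triple power2} (because an exponent reaches $p-1$ or $p$, or $m$ reaches $p-1$ or $p-2$); one substitutes the corresponding special value and simplifies the outcome using $(a-b)^{p-1}\equiv1$ to reach the stated forms $2b+a^2(a-b)^{p-2}$, $2a-b^2(a-b)^{p-2}$, and $-1+2ab(a-b)^{p-3}$. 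The algebra in each of these is short; the genuinely fiddly part --- and the main obstacle --- is the bookkeeping: for each $(m,n)$ one must decide precisely which branch of \thmref{thm triple power2} governs each of the two sums, and verify that the extra additive constants that appear (coming from the indicator-function behaviour of $(p-1)$-st powers) reassemble into exactly the four exceptional values claimed.
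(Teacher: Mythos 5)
Your strategy is the same as the paper's: apply the second identity of \lemref{lem triple power1a} with $s=2$ to write the sum as $\sum_k(a+k)^m(b+k)^{n+1}k-b\sum_k(a+k)^m(b+k)^nk$ and evaluate both pieces by \thmref{thm triple power2}. The generic recombination into $J$ is correct, the case $m=n=p-1$ is handled exactly as in the paper, and your uniform use of the recursion for $(p-1,p-2)$, $(p-2,p-1)$ and $(p-2,p-2)$ does work out (the paper instead treats the first two of these directly via \eqref{eqn FLT} and \thmref{thmcor new power sum3}, but your route closes after simplifying with $(a-b)^{p-1}\equiv1$, as you say).

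The gap is in the bookkeeping you yourself flag as the main obstacle: the set of pairs $(m,n)$ for which one of the two auxiliary sums is \emph{not} governed by the ``otherwise'' branch of \thmref{thm triple power2} is strictly larger than the four exceptional pairs in the statement. For $(m,n)=(p-1,p-3)$ the first sum is $\sum_k(a+k)^{p-1}(b+k)^{p-2}k\equiv-2-b(a-b)^{p-2}$, the second exceptional case of \thmref{thm triple power2}, which is $-1$ plus what the generic formula would give; for $(m,n)=(p-3,p-1)$ the Fermat reduction $(b+k)^p\to(b+k)$ turns the relevant binomial coefficient into $\binom{p-3}{p-3}=1$ rather than the $\binom{m}{p-n-3}=\binom{m}{-2}=0$ that the formal substitution $n\mapsto n+1$ produces. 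In both cases the correct value is $J-1$, not $J$: e.g.\ for $p=5$, $(m,n)=(4,2)$, $(a,b)=(1,2)$ one computes $\sum_{k=0}^{4}(1+k)^4(2+k)^2k^2\equiv 3$ while $J\equiv 4 \pmod 5$. So the proof cannot be completed as written for these two pairs; consistently with \thmref{thm triple power1}, an extra $-I_3$ contribution (here $-1$) appears whenever $m+n+2\ge 2(p-1)$, i.e.\ also for $(m,n)\in\{(p-1,p-3),(p-3,p-1)\}$, and the statement's case list would need to be enlarged accordingly.
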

\begin{proof}
If $m=n=p-1$, $\displaystyle{\sum_{k=0}^{p-1} (a+k)^m (b+k)^n k^2 \equiv \sum_{\substack{k=1 \\ k \not \equiv -a, \ -b}}^{p-1}k^2 \equiv -(a^2+b^2)}$ by \eqref{eqn FLT} and \thmref{thm power sum0}.
This proves the first congruence.

If $m=p-1$ and $n=p-2$, we obtain the the second congruence as follows:
\begin{equation*}\label{eqn thm triple2s2a}
\begin{split}
\sum_{k=0}^{p-1} (a+k)^m (b+k)^n k^2 &\equiv \sum_{\substack{k=1 \\ k \not \equiv -a}}^{p-1}(b+k)^{p-2} k^2
\equiv \sum_{k=1}^{p-1}(b+k)^{p-2} k^2 -(b-a)^{p-2}a^2, \quad \text{by \eqref{eqn FLT}}\\
&\equiv -b\binom{p-2}{p-3} -(b-a)^{p-2}a^2, \quad \text{by \thmref{thmcor new power sum3}}\\
& \equiv 2b+a^2(a-b)^{p-2}.
\end{split}
\end{equation*}
If $m=p-2$ and $n=p-1$, by \eqref{eqn FLT} 

\begin{equation*}\label{eqn thm triple2s2aa}
\begin{split}
\sum_{k=0}^{p-1} (a+k)^m (b+k)^n k^2 &\equiv \sum_{\substack{k=1 \\ k \not \equiv -b}}^{p-1}(a+k)^{p-2} k^2
\equiv \sum_{k=1}^{p-1}(a+k)^{p-2} k^2 -(a-b)^{p-2}b^2\\
&\equiv -a\binom{p-2}{p-3} -(a-b)^{p-2}b^2, \quad \text{by \thmref{thmcor new power sum3}}\\
&\equiv 2a-b^2(a-b)^{p-2}.
\end{split}
\end{equation*}
This proves the third congruence.
By \lemref{lem triple power1a}, 
\begin{equation}\label{eqn thm triple20s2b}
\begin{split}
\sum_{k=0}^{p-1} (a+k)^m (b+k)^n k^2 =\sum_{k=0}^{p-1} (a+k)^m (b+k)^{n+1}k-b\sum_{k=0}^{p-1} (a+k)^m (b+k)^{n}k.
\end{split}
\end{equation}
If $m=p-2$ and $n=p-2$, by \eqref{eqn thm triple20s2b} and \thmref{thm triple power2} we have
\begin{equation*}\label{eqn thm triple2s2bb}
\begin{split}
&\sum_{k=0}^{p-1} (a+k)^m (b+k)^n k^2 
\equiv -2+a(a-b)^{p-2} -b \Big[ -(a-b)^{p-2+p-1} 
+b(a-b)^{p-2+p-2}(p-2)
\Big] \\
&\equiv -2+a(a-b)^{p-2}+b(a-b)^{p-2}+2b^2(a-b)^{p-3}, \quad \text{by \eqref{eqn FLT}}\\
&\equiv -1+2b(a-b)^{p-2}+2b^2(a-b)^{p-3}, \quad \text{since $1 \equiv a(a-b)^{p-2}-b(a-b)^{p-2}$}\\
&\equiv -1+2ab(a-b)^{p-3}.
\end{split}
\end{equation*}
This proves the fourth congruence.
For the remaining values of $m$ and $n$, we can use \eqref{eqn thm triple20s2b} and apply the last congruence in \thmref{thm triple power2} to derive
\begin{equation*}\label{eqn thm triple2s2c}
\begin{split}
&\sum_{k=0}^{p-1} (a+k)^m (b+k)^n k^2 \equiv  -(a-b)^{m+n+2} \binom{m}{p-n-3}+b(a-b)^{m+n+1}\binom{m}{p-n-2}\\
& \qquad \qquad \qquad \quad \quad \quad \quad \quad \quad 
-b \Big[ -(a-b)^{m+n+1} \binom{m}{p-n-2}+b(a-b)^{m+n}\binom{m}{p-n-1}\Big] \\
&\equiv -(a-b)^{m+n+2} \binom{m}{p-n-3}+2b(a-b)^{m+n+1}\binom{m}{p-n-2}-b^2(a-b)^{m+n}\binom{m}{p-n-1}.
\end{split}
\end{equation*}
This completes the proof of the theorem.
\end{proof}
More generally, we have
\begin{theorem}\label{thm triple power2sg}
Let $p$ be a prime number, and let $a, \, b, \, m, \, n, \, s \in \{1, \, 2, \, \ldots, \, p-1\}$ with $a \neq b$.
We have the following congruence modulo $p$:
\begin{equation*}\label{eqn thm triple2sg}
\begin{split}
\sum_{k=0}^{p-1} &(a+k)^m (b+k)^n k^s \equiv \\ 
&\begin{cases} 
-3,  \quad \text{if $m=n=s=p-1$}\\
-b^{n+s-(p-1)} \binom{n}{p-1-s} - (b-a)^{n}(-a)^s,  \quad \text{if $m=p-1$ and $(n,s) \neq (p-1,p-1)$}\\
-\sum_{j=0}^{M-m-1}  \binom{m}{R-j} \binom{n}{j}a^{R-j} b^j -\sum_{j=M-m}^{n}  \binom{m}{M-j} \binom{n}{j}a^{M-j} b^j,  \quad \text{otherwise},
\end{cases}
\end{split}
\end{equation*}
where $M=m+n+s-(p-1)$ and $R=m+n+s-2(p-1)$.
\end{theorem}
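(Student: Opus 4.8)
The statement is, after reindexing, a single‑formula repackaging of \thmref{thm triple power1} (supplemented by the two‑term results of \secref{sec sumterm2}), and the plan is to reduce it to those.

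First I would dispose of the two exceptional exponent patterns. If $m=n=s=p-1$, then by \eqref{eqn FLT} each of $(a+k)^{p-1}$, $(b+k)^{p-1}$, $k^{p-1}$ is $\equiv 1$ for $k\not\equiv 0,-a,-b$ and $\equiv 0$ otherwise, so (using $a\neq b$, so the three excluded residues are distinct)
$$\sum_{k=0}^{p-1}(a+k)^{p-1}(b+k)^{p-1}k^{p-1}\equiv p-3\equiv -3.$$
If $m=p-1$ and $(n,s)\neq(p-1,p-1)$, then $(a+k)^{p-1}$ is $\equiv 1$ off $k\equiv -a$ and $\equiv 0$ at $k\equiv -a$, hence
$$\sum_{k=0}^{p-1}(a+k)^{p-1}(b+k)^{n}k^{s}\equiv\sum_{k=0}^{p-1}(b+k)^{n}k^{s}-(b-a)^{n}(-a)^{s},$$
and the first sum on the right is $-b^{n+s-(p-1)}\binom{n}{p-1-s}$ by \thmref{thmcor new power sum3}, whose hypothesis is exactly $(n,s)\neq(p-1,p-1)$. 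These give the first two displayed cases.

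For the remaining case $m\leq p-2$ I would invoke \thmref{thm triple power1}. Since $m\leq p-2$ forces $m+n+s\leq 3(p-1)-1$, only its sub‑cases $m+n+s<p-1$, $p-1\leq m+n+s<2(p-1)$ and $2(p-1)\leq m+n+s<3(p-1)$ can occur, with answers $0$, $-I_{1}$ and $-I_{2}-I_{3}$; it remains to check these all equal the displayed formula. Because $\binom{m}{M-j}\binom{n}{j}$ is supported on $\max(0,M-m)\leq j\leq\min(M,n)$ and $\binom{m}{R-j}\binom{n}{j}$ on $\max(0,R-m)\leq j\leq\min(R,n)$, one verifies that $\sum_{j=M-m}^{n}\binom{m}{M-j}\binom{n}{j}a^{M-j}b^{j}$ is literally $I_{1}$ when $m+n+s<2(p-1)$ and literally $I_{2}$ when $m+n+s\geq 2(p-1)$, while $\sum_{j=0}^{M-m-1}\binom{m}{R-j}\binom{n}{j}a^{R-j}b^{j}$ is $0$ when $m+n+s<2(p-1)$ (then $R<0$) and $I_{3}$ otherwise. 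The one place where $m\leq p-2$ is genuinely used is the inequality $R=M-(p-1)\leq M-m-1$, which is what guarantees that $[0,M-m-1]$ already contains the whole support of $I_{3}$ (so no term is lost and none spurious). When $m+n+s\leq p-1$ the formula collapses automatically: $M\leq 0$ (and $R<0$) kills the relevant binomials, producing $0$, respectively $-1$.

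An alternative, matching the placement of the theorem right after \thmref{thm triple power2} and \thmref{thm triple power2s2}, is to induct on $s$ via the recursion
$$\sum_{k=0}^{p-1}(a+k)^{m}(b+k)^{n}k^{s}=\sum_{k=0}^{p-1}(a+k)^{m}(b+k)^{n+1}k^{s-1}-b\sum_{k=0}^{p-1}(a+k)^{m}(b+k)^{n}k^{s-1}$$
from \lemref{lem triple power1a}, with base case $s=1$ supplied by \thmref{thm triple power2}; the drawback is that the raised exponent $n+1$ must be reduced modulo $p-1$ whenever it reaches $p$ (minding the lone residue $k\equiv -b$), which is why I favour the first route. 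In either approach no new idea is required — the whole task, and the only real obstacle, is the bookkeeping of the binomial‑coefficient supports and the verification that the piecewise formulas of \thmref{thm triple power1} (and of \thmref{thm triple power2}, \thmref{thm triple power2s2}) literally coincide with the single displayed expression.
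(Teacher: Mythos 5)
Your proposal is correct. The two exceptional cases ($m=n=s=p-1$, and $m=p-1$ with $(n,s)\neq(p-1,p-1)$) are handled exactly as in the paper: strip off $(a+k)^{p-1}$ via Fermat, account for the single excluded residue $k\equiv -a$, and apply \thmref{thmcor new power sum3}. For the main case $m\le p-2$, however, you take a genuinely different route. The paper gives a fresh derivation: it expands only $(b+k)^n$ by the Binomial Theorem, reduces the exponent of $k^{s+n-j}$ modulo $p-1$ when it exceeds $p-1$, and evaluates each inner sum $\sum_k(a+k)^mk^{e}$ by the two-term result \thmref{thmcor new power sum3} (this is where $m<p-1$ is used there, since that theorem's non-degenerate case requires it). You instead reduce to the already-proved \thmref{thm triple power1} and verify that its piecewise answers $0$, $-I_1$, $-I_2-I_3$ coincide term-by-term with the single displayed formula; your support analysis is right, and you correctly isolate the one nontrivial inequality, $R\le M-m-1$ (equivalent to $m\le p-2$), which guarantees the range $[0,M-m-1]$ captures all of $I_3$ without introducing spurious terms, while $R<0$ kills that sum entirely in the middle regime. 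Your route buys economy (no new computation, only bookkeeping) at the cost of making the statement look like a reindexing of \thmref{thm triple power1}; the paper's route is self-contained and, by splitting the expansion asymmetrically between the two factors, produces the stated index ranges $j\le M-m-1$ and $j\ge M-m$ directly rather than after a support-matching argument. Both are complete proofs.
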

\begin{proof}
When $m=p-1$, 
\begin{equation*}\label{eqn thm triple2sgg}
\begin{split}
\sum_{k=0}^{p-1} (a+k)^m (b+k)^n k^s \equiv \sum_{\substack{k=0 \\ k \not \equiv -a}}^{p-1} (b+k)^n k^s
\equiv \sum_{k=0}^{p-1} (b+k)^n k^s-(b-a)^n(-a)^s.
\end{split}
\end{equation*}
Thus, the first and the second congruences in the theorem follows from \thmref{thmcor new power sum3}.
Next, we consider the case $m<p-1$.
\begin{equation*}\label{eqn thm triple2sgh}
\begin{split}
\sum_{k=0}^{p-1} (a+k)^m (b+k)^n k^s &=  \sum_{k=0}^{p-1} (a+k)^m k^s \sum_{j=0}^{n} \binom{n}{j} b^j k^{n-j},
\quad \text{ by Binomial Theorem}\\ 
&=\sum_{j=0}^{n} \binom{n}{j} b^j  \sum_{k=0}^{p-1} (a+k)^m k^{s+n-j}.
\end{split}
\end{equation*}
When $s+n-j > p-1$,  $k^{s+n-j} \equiv k^{s+n-j-(p-1)}$. Thus, we continue
\begin{equation*}\label{eqn thm triple2sgk}
\begin{split} 
& \equiv \sum_{j=0}^{s+n-(p-1)-1} \binom{n}{j} b^j  \sum_{k=0}^{p-1} (a+k)^m k^{s+n-j-(p-1)} + \sum_{j=s+n-(p-1)}^{n} \binom{n}{j} b^j  \sum_{k=0}^{p-1} (a+k)^m k^{s+n-j}\\
&\equiv -\sum_{j=0}^{s+n-(p-1)-1}  \binom{m}{R-j} \binom{n}{j}a^{R-j} b^j -\sum_{j=s+n-(p-1)}^{n}  \binom{m}{M-j} \binom{n}{j}a^{M-j} b^j\\
&\equiv -\sum_{j=0}^{M-m-1}  \binom{m}{R-j} \binom{n}{j}a^{R-j} b^j -\sum_{j=M-m}^{n}  \binom{m}{M-j} \binom{n}{j}a^{M-j} b^j,
\end{split}
\end{equation*}
where the second congruence follows from \thmref{thmcor new power sum3} as $m<p-1$.
\end{proof}
Note that when $s=1$ and $m+n+2-p \geq 0$, \thmref{thm triple power1} and \thmref{thm triple power2} give formulas for 
$\sum_{k=0}^{p-1} (a+k)^m (b+k)^n k$ in two different forms. We obtain the following congruence by comparing those two formulas:
\begin{theorem}\label{thm triple comp1}
Let $p$ be a prime number, and let $a, \, b, \, m, \, n \in \{1, \, 2, \, \ldots, \, p-1\}$ with $a \neq b$. Suppose $M=m+n+2-p$ and $M < p-1$ we have the following congruence modulo $p$:
\begin{equation*}\label{eqn thm triple comp1}
\begin{split}
\sum_{j=0}^{M}\binom{m}{M-j}\binom{n}{j}a^{M-j}b^j \equiv (a-b)^{m+n+1} \binom{m}{p-n-2} -b (a-b)^{m+n} \binom{m}{p-n-1}.
\end{split}
\end{equation*}
\end{theorem}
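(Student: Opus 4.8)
The plan is to compute the sum $\displaystyle S:=\sum_{k=0}^{p-1}(a+k)^m(b+k)^n k$ modulo $p$ in two different ways — once through \thmref{thm triple power1} and once through \thmref{thm triple power2} — and then equate the two resulting closed forms. Everything in the statement to be proved is packaged in those two earlier computations of $S$, so no new idea is needed.

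First I would specialize \thmref{thm triple power1} to $s=1$. With $s=1$ the quantity called $M$ there is $m+n+1-(p-1)=m+n+2-p$, which is exactly the $M$ of the present statement. The hypothesis $M<p-1$ is equivalent to $m+n+1<2(p-1)$, so we are never in the third case $2(p-1)\le m+n+s<3(p-1)$, nor in the degenerate case $m=n=s=p-1$, of \thmref{thm triple power1}: we are in its first case when $M<0$ and in its second case when $0\le M<p-1$. In both situations the theorem gives $S\equiv -I_1$, where $I_1=\sum_{j=0}^{M}\binom{m}{M-j}\binom{n}{j}a^{M-j}b^j$, with the convention that this sum is empty (hence $0$) when $M<0$, consistent with the first case of \thmref{thm triple power1}.

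Next I would apply \thmref{thm triple power2}, that is, the $s=1$ sum computed there via \lemref{lem triple power1a} and \thmref{thmcor new power sum3 second}. Its three exceptional branches occur precisely when $(m,n)\in\{(p-1,p-1),(p-1,p-2),(p-2,p-1)\}$, which correspond to $M=p$, $M=p-1$ and $M=p-1$ respectively; all are excluded by $M<p-1$. Hence we land in the ``otherwise'' branch, giving
$$S\equiv -(a-b)^{m+n+1}\binom{m}{p-n-2}+b\,(a-b)^{m+n}\binom{m}{p-n-1}.$$
Equating this with $S\equiv -I_1$ and multiplying through by $-1$ yields the claimed congruence. The only work is the case-bookkeeping above — verifying that $M<p-1$ simultaneously keeps us out of the higher-order cases of \thmref{thm triple power1} and out of the exceptional cases of \thmref{thm triple power2} — plus the trivial observation that when $M<0$ both sides vanish (then $m+n<p-2$ forces $m<p-n-2$, so both binomial coefficients on the right are $0$, and the left-hand sum is empty). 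I do not expect any genuine obstacle here; the substance of the theorem lies entirely in the two earlier evaluations of $S$.
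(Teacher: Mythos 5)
Your proposal is correct and is exactly the paper's argument: the congruence is obtained by equating the $s=1$ specialization of \thmref{thm triple power1} with \thmref{thm triple power2}, and your case-bookkeeping (that $M<p-1$ rules out the higher-order and exceptional branches, and that both sides vanish when $M<0$) is accurate and in fact slightly more explicit than what the paper records.
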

If we apply Binomial Theorem to the terms $(a-b)^{m+n+1}$ and $(a-b)^{m+n}$ in \thmref{thm triple comp1}, we notice that not only the congruence in \thmref{thm triple comp1} holds, but also the following congruences of the coefficients of each terms in the summations hold:
\begin{theorem}\label{thm triple cong1}
Let $p$ be a prime number, and let $m, \, n \in \{1, \, 2, \, \ldots, \, p-1\}$. Suppose $M=m+n+2-p$ and $M < p-1$. For every $0 \leq j \leq M$, we have the following congruence modulo $p$:
\begin{equation*}\label{eqn thm triple cong1}
\begin{split}
(-1)^j\binom{m}{M-j}\binom{n}{j} \equiv \binom{m}{M}\binom{M}{j} +\binom{m}{M-1}\binom{M-1}{j-1} .
\end{split}
\end{equation*}
\end{theorem}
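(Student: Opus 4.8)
The plan is to deduce \thmref{thm triple cong1} from \thmref{thm triple comp1}, exactly as the sentence just before it indicates: rewrite both sides of \thmref{thm triple comp1} as genuine polynomials in $a$ and $b$ and then compare the coefficient of $a^{M-j}b^{j}$. The first move is to simplify the powers of $a-b$. Since $a\neq b$ and $a,b\in\{1,\dots,p-1\}$ we have $p\nmid a-b$, so $(a-b)^{p-1}\equiv1\pmod p$ by \eqref{eqn FLT}; as $m+n+1=M+(p-1)$ and $m+n=(M-1)+(p-1)$, the right-hand side of \thmref{thm triple comp1} is congruent mod $p$ to $\binom{m}{p-n-2}(a-b)^{M}-b\binom{m}{p-n-1}(a-b)^{M-1}$ (with the second term read as $0$ when $M=0$, since then $\binom{m}{p-n-1}=\binom{m}{m+1}=0$). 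Because $M=m+n+2-p$ gives $p-n-2=m-M$ and $p-n-1=m-M+1$, the two binomial factors may be rewritten as $\binom{m}{p-n-2}=\binom{m}{M}$ and $\binom{m}{p-n-1}=\binom{m}{M-1}$.

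Expanding $(a-b)^{M}$ and $b(a-b)^{M-1}$ by the Binomial Theorem, the coefficient of $a^{M-j}b^{j}$ on the left-hand side of \thmref{thm triple comp1} is $\binom{m}{M-j}\binom{n}{j}$, while on the right-hand side it is $(-1)^{j}\binom{m}{M}\binom{M}{j}+(-1)^{j}\binom{m}{M-1}\binom{M-1}{j-1}$. Equating the two and multiplying through by $(-1)^{j}$ gives exactly the congruence asserted in \thmref{thm triple cong1}.

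What still needs to be checked is that one is entitled to compare coefficients. After the reduction above, both sides of \thmref{thm triple comp1} are homogeneous polynomials in $a,b$ of degree $M$, and they agree mod $p$ for every admissible pair $(a,b)$; let $R(a,b)$ denote their difference, a homogeneous polynomial of degree $\le M$ over $\mathbb F_{p}$. Fixing $b=1$, the polynomial $R(a,1)$ has degree $\le M$ and vanishes at the $p-2$ residues $a\in\{2,\dots,p-1\}$; when $M\leq p-3$ this forces $R(a,1)\equiv0$, hence $R\equiv0$ by homogeneity, and the coefficients match termwise. The borderline case $M=p-2$ is the step I expect to require the most care: there $p-2$ is no longer larger than the degree, and $R(a,1)$, if nonzero, would have to be a nonzero scalar multiple of $1+a+\dots+a^{p-2}$ (the unique monic polynomial of degree $p-2$ with root set $\mathbb F_{p}\setminus\{0,1\}$); but the coefficient of $a^{M}$ in $R$ equals (coefficient of $a^{M}$ on the left) minus (coefficient of $a^{M}$ on the right), and both equal $\binom{m}{M}$, so this coefficient is $0$, forcing $R\equiv0$ in this case as well.

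It is worth recording an alternative route that sidesteps the polynomial argument entirely. By the semisymmetry relations \eqref{eqn thms CO} one has $\binom{n}{j}\equiv(-1)^{j}\binom{p-1-n+j}{j}\pmod p$, and $p-1-n=m-M+1$; hence \thmref{thm triple cong1} is equivalent to the purely combinatorial identity $\binom{m}{M-j}\binom{m-M+1+j}{j}=\binom{m}{M}\binom{M}{j}+\binom{m}{M-1}\binom{M-1}{j-1}$. Writing $\binom{m}{M-j}=\binom{m}{m-M+j}$, applying Pascal's identity $\binom{q+1}{j}=\binom{q}{j}+\binom{q}{j-1}$ with $q=m-M+j$, and then invoking the cancellation identity \eqref{eqn product1}, both sides of this identity reduce to $\binom{m}{j}\binom{m-j}{M-j}+\binom{m}{j-1}\binom{m-j+1}{M-j}$, which completes the argument.
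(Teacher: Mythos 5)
Your proof is correct, and your primary argument takes a genuinely different route from the paper's. The paper proves \thmref{thm triple cong1} by a direct chain of binomial identities: starting from the right-hand side it applies the cancellation identity \eqref{eqn product1}, the symmetry property, Pascal's identity, and finally the second congruence in \eqref{eqn thms CO} to arrive at $(-1)^j\binom{m}{M-j}\binom{n}{j}$; this is precisely the ``alternative route'' in your closing paragraph, run in the opposite direction. Your main route instead deduces the statement from \thmref{thm triple comp1} by comparing coefficients of $a^{M-j}b^{j}$ --- the motivation the paper states just before the theorem but does not actually carry out as a proof. The substantive content you add is the justification that coefficient comparison is legitimate: two polynomials over $\mathbb{F}_p$ that agree as functions need not have equal coefficients, and your reduction to a univariate polynomial of degree at most $M$ vanishing at the $p-2$ points $a\in\{2,\dots,p-1\}$, together with the separate treatment of the borderline case $M=p-2$ via the top coefficient, closes this gap correctly. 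What each approach buys: yours explains where the identity comes from and makes the paper's heuristic rigorous, at the price of needing \thmref{thm triple comp1} (hence \thmref{thm triple power1} and \thmref{thm triple power2}) as input; the paper's computation is self-contained and is the one that generalizes cleanly to \thmref{thm triple cong3}. One minor point, shared by your alternative route and the paper's proof alike: invoking the second congruence of \eqref{eqn thms CO} with $k=n$, $s=j$ formally requires $j\le n$, whereas $j=M=n+1$ can occur when $m=p-1$; in that case both sides of the asserted congruence are separately seen to vanish modulo $p$ (the right-hand side summing to $\binom{m+1}{M}=\binom{p}{n+1}\equiv 0$), so nothing breaks.
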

\begin{proof}
By the cancellation identity,
\begin{equation*}\label{eqn thm triple cong6}
\begin{split}
&\binom{m}{M}\binom{M}{j} +\binom{m}{M-1}\binom{M-1}{j-1}  = \binom{m}{j}\binom{m-j}{M-j} +\binom{m}{j-1}\binom{m-j+1}{M-j}\\
&=\binom{m}{m-j}\binom{m-j}{M-j} +\binom{m}{m-j+1}\binom{m-j+1}{M-j}, \quad \text{by the symmetry property}\\
&=\binom{m}{M-j}\binom{m-M+j}{m-M} +\binom{m}{M-j}\binom{m-M+j}{m-M+1}, \quad \text{by the cancellation identity}\\
&=\binom{m}{M-j}\binom{m-M+j+1}{m-M+1}, \quad \text{by the Pascal's identity}\\
&=\binom{m}{M-j}\binom{m-M+j+1}{j}, \quad \text{by the symmetry property}\\
&=\binom{m}{M-j}\binom{p-1-n+j}{j}, \quad \text{by the definition of $M$}\\
&\equiv (-1)^j\binom{m}{M-j}\binom{n}{j}, \quad 
\text{by the 2nd congruence in \eqref{eqn thms CO}}.
\end{split}
\end{equation*}
This completes the proof.
\end{proof}
When $s=2$ and $m+n+3-p \geq 0$, both \thmref{thm triple power1} and \thmref{thm triple power2s2} express 
$\sum_{k=0}^{p-1} (a+k)^m (b+k)^n k^2$ in two different ways. This leads to the following congruence:
\begin{theorem}\label{thm triple comp2}
Let $p$ be a prime number, and let $a, \, b, \, m, \, n \in \{1, \, 2, \, \ldots, \, p-1\}$ with $a \neq b$. Suppose $M=m+n+3-p$ and $M< p-1$ we have the following congruence modulo $p$:
\begin{equation*}\label{eqn thm triple comp2}
\begin{split}
\sum_{j=0}^{M}\binom{m}{M-j}\binom{n}{j}a^{M-j}b^j &\equiv (a-b)^{m+n+2} \binom{m}{p-n-3} -2b (a-b)^{m+n+1} \binom{m}{p-n-2}\\
&\quad \quad +b^2 (a-b)^{m+n} \binom{m}{p-n-1}.
\end{split}
\end{equation*}
\end{theorem}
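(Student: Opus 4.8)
The plan is to mimic the derivation of \thmref{thm triple comp1}: evaluate the single sum $S:=\sum_{k=0}^{p-1}(a+k)^m(b+k)^n k^2$ in two different ways and equate the answers. Note first that the hypothesis $0\le M=m+n+3-p<p-1$ is literally the assertion that $p-1\le m+n+2<2(p-1)$, i.e. that $m+n+s$ with $s=2$ falls in the middle range of \thmref{thm triple power1}; and with $s=2$ one has $m+n+s-(p-1)=M$. Hence the first evaluation is the second branch of \thmref{thm triple power1}:
\begin{equation*}
S\equiv -I_1=-\sum_{j=0}^{M}\binom{m}{M-j}\binom{n}{j}a^{M-j}b^j .
\end{equation*}

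For the second evaluation I would invoke \thmref{thm triple power2s2}. Before doing so one has to check that the hypothesis $M<p-1$ excludes all four exceptional branches listed there: if $m=n=p-1$ then $M=p+1$; if $\{m,n\}=\{p-1,p-2\}$ then $M=p$; and if $m=n=p-2$ then $M=p-1$; in every case $M\ge p-1$, so none occurs. Thus $S$ is computed by the generic (``otherwise'') branch of \thmref{thm triple power2s2}, giving
\begin{equation*}
S\equiv -(a-b)^{m+n+2}\binom{m}{p-n-3}+2b(a-b)^{m+n+1}\binom{m}{p-n-2}-b^2(a-b)^{m+n}\binom{m}{p-n-1}.
\end{equation*}

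Equating the two expressions for $S$ and cancelling the overall sign gives exactly the claimed congruence. I do not expect a genuine obstacle here; the only point requiring care is the bookkeeping just described, namely verifying that the single hypothesis $0\le M<p-1$ simultaneously selects the correct branch of \thmref{thm triple power1} and avoids every degenerate branch of \thmref{thm triple power2s2}. (If one wishes to drop the implicit assumption $M\ge 0$: when $M<0$ the left-hand side is an empty sum, while on the right each of $p-n-3,\,p-n-2,\,p-n-1$ exceeds $m$, so all three binomial coefficients vanish, and the congruence holds trivially.) One could further refine this, just as \thmref{thm triple cong1} refines \thmref{thm triple comp1}, into term-by-term congruences among the coefficients: expand the powers $(a-b)^{m+n+2},(a-b)^{m+n+1},(a-b)^{m+n}$ by the Binomial Theorem and match the coefficient of $a^{M-j}b^{j}$ on both sides using the cancellation identity \eqref{eqn product1} together with \eqref{eqn thms CO}.
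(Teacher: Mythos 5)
Your proposal is correct and is exactly the argument the paper intends: the sentence preceding the theorem identifies the congruence as the comparison of the two evaluations of $\sum_{k=0}^{p-1}(a+k)^m(b+k)^n k^2$ coming from Theorem~\ref{thm triple power1} (middle branch, $s=2$) and the generic branch of Theorem~\ref{thm triple power2s2}. Your added bookkeeping — checking that $M<p-1$ rules out all four exceptional branches of Theorem~\ref{thm triple power2s2} and that the $M<0$ case is vacuous — is accurate and fills in details the paper leaves implicit.
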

If we apply Binomial Theorem to the terms $(a-b)^{m+n+2}$, $(a-b)^{m+n+1}$ and $(a-b)^{m+n}$ in \thmref{thm triple comp2}, we notice that the following congruences of the coefficients of each terms in those summations hold:

\begin{theorem}\label{thm triple cong2}
Let $p$ be a prime number, and let $m, \, n \in \{1, \, 2, \, \ldots, \, p-1\}$. Suppose $M=m+n+3-p$ and $M < p-1$. For every $0 \leq j \leq M$, we have the following congruence modulo $p$:
\begin{equation*}\label{eqn thm triple cong2a}
\begin{split}
(-1)^j\binom{m}{M-j}\binom{n}{j} \equiv \binom{m}{M}\binom{M}{j} +2\binom{m}{M-1}\binom{M-1}{j-1} +\binom{m}{M-2}\binom{M-2}{j-2} .
\end{split}
\end{equation*}
\end{theorem}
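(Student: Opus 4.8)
The plan is to run through the same chain of identities that proves Theorem \ref{thm triple cong1}; the one genuinely new ingredient is that the coefficients $1,2,1$ must be collapsed by \emph{two} applications of Pascal's identity rather than one. Concretely, set $N=m-M+j$ and $r=m-M$, and note that $1,2,1$ are $\binom{2}{0},\binom{2}{1},\binom{2}{2}$, so that
\[
\binom{N}{r}+2\binom{N}{r+1}+\binom{N}{r+2}
=\Bigl(\binom{N}{r}+\binom{N}{r+1}\Bigr)+\Bigl(\binom{N}{r+1}+\binom{N}{r+2}\Bigr)
=\binom{N+1}{r+1}+\binom{N+1}{r+2}=\binom{N+2}{r+2}.
\]

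First I would start from the right-hand side and apply the cancellation identity \eqref{eqn product1} to each of the three products, turning $\binom{m}{M}\binom{M}{j}+2\binom{m}{M-1}\binom{M-1}{j-1}+\binom{m}{M-2}\binom{M-2}{j-2}$ into $\binom{m}{j}\binom{m-j}{M-j}+2\binom{m}{j-1}\binom{m-j+1}{M-j}+\binom{m}{j-2}\binom{m-j+2}{M-j}$. Then, applying the symmetry property $\binom{m}{i}=\binom{m}{m-i}$ to each leading factor and the cancellation identity once more — exactly as in the proof of Theorem \ref{thm triple cong1}, now pulling out the common factor $\binom{m}{M-j}$ — this becomes
\[
\binom{m}{M-j}\left(\binom{m-M+j}{m-M}+2\binom{m-M+j}{m-M+1}+\binom{m-M+j}{m-M+2}\right).
\]

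Next I would collapse the bracket via the identity above (with $N=m-M+j$, $r=m-M$) to get $\binom{m}{M-j}\binom{m-M+j+2}{m-M+2}$, rewrite $\binom{m-M+j+2}{m-M+2}=\binom{m-M+j+2}{j}$ by symmetry, and substitute $M=m+n+3-p$, so that $m-M+j+2=p-1-n+j$. Finally the second congruence of \eqref{eqn thms CO} (with $k=n$, $s=j$) gives $\binom{p-1-n+j}{j}\equiv(-1)^j\binom{n}{j}\pmod p$, so the right-hand side is congruent to $(-1)^j\binom{m}{M-j}\binom{n}{j}$, the left-hand side, as required.

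I do not anticipate a real obstacle: the computation is structurally identical to the one already carried out for Theorem \ref{thm triple cong1}, with ``width $2$'' replacing ``width $1$''. The only points needing mild attention are the index bookkeeping (the paired shifts $M,M-1,M-2$ against $j,j-1,j-2$) and the boundary values $j\in\{0,1\}$, where some of the binomials vanish by the convention $\binom{N}{k}=0$ for $k<0$; since the displayed identities remain valid under that convention, nothing breaks. The same scheme, with weights $\binom{s}{i}$ and $s$ successive applications of Pascal's identity, should give the analogue of this statement matching the general sum $\sum_{k=0}^{p-1}(a+k)^m(b+k)^n k^s$ from Theorem \ref{thm triple power2sg}.
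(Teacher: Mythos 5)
Your proof is correct and is precisely the argument the paper has in mind: it states that Theorem~\ref{thm triple cong2} follows by the same chain as Theorem~\ref{thm triple cong1}, and your cancellation--symmetry--cancellation steps followed by the two-fold Pascal collapse of the weights $1,2,1$ (the $s=2$ instance of the Vandermonde step used later in Theorem~\ref{thm triple cong3}) and the final appeal to \eqref{eqn thms CO} reproduce that chain exactly. No gaps beyond the boundary-index conventions you already address.
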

The proof of \thmref{thm triple cong2} can be obtained by following similar arguments used in the proof of \thmref{thm triple cong1}. Moreover, we noticed by doing computations in \cite{MMA} that \thmref{thm triple cong1} and \thmref{thm triple cong2} can be generalized as follows:
\begin{theorem}\label{thm triple cong3}
Let $p$ be a prime number, and let $m, \, n, \, s \in \{0, \, 1, \, \ldots, \, p-1\}$. Suppose $M=m+n+s-(p-1)$ and $0 \leq M < p-1$. For every $0 \leq j \leq M$, we have
\begin{equation*}\label{eqn thm triple cong3a}
\begin{split}
(-1)^j\binom{m}{M-j}\binom{n}{j} \equiv \sum_{k=0}^{s}\binom{s}{k}\binom{m}{M-k}\binom{M-k}{j-k}  \, \, \, mod \, \, p
\end{split}
\end{equation*}
\end{theorem}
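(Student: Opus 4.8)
The plan is to prove the congruence \emph{termwise} and directly, in the spirit of the proof of \thmref{thm triple cong1}, by collapsing the right‐hand sum to a single Vandermonde convolution and then applying the semi‐symmetry relation \eqref{eqn thms CO}. Write $S:=\sum_{k=0}^{s}\binom{s}{k}\binom{m}{M-k}\binom{M-k}{j-k}$ for the quantity on the right. First I would apply the cancellation identity \eqref{eqn product1} to each summand in the form $\binom{m}{M-k}\binom{M-k}{j-k}=\binom{m}{j-k}\binom{m-j+k}{M-j}$; here and below, any summand in which an index falls outside the range $0\le s\le k\le n$ required by \eqref{eqn product1} has both sides equal to $0$ (with the usual conventions) and may be discarded. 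Then, using the multinomial symmetry $\binom{m}{u}\binom{m-u}{v}=\binom{m}{v}\binom{m-v}{u}$ (which is just two applications of \eqref{eqn product1}, both sides being $\binom{m}{u+v}\binom{u+v}{v}$), I rewrite $\binom{m}{j-k}\binom{m-j+k}{M-j}=\binom{m}{M-j}\binom{m-M+j}{j-k}$. The factor $\binom{m}{M-j}$ is independent of $k$, so
\[
S=\binom{m}{M-j}\sum_{k=0}^{s}\binom{s}{k}\binom{m-M+j}{j-k}.
\]

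Next I would evaluate the remaining sum by the Vandermonde convolution $\sum_{k}\binom{s}{k}\binom{N}{j-k}=\binom{s+N}{j}$ (obtained by comparing the coefficient of $x^{j}$ in $(1+x)^{s}(1+x)^{N}=(1+x)^{s+N}$). With $N=m-M+j$ and the identity $m-M=(p-1)-n-s$ coming from $M=m+n+s-(p-1)$, one gets $s+N=p-1-n+j$, hence $S=\binom{m}{M-j}\binom{p-1-n+j}{j}$ as an exact identity of integers. Finally I invoke the second semi‐symmetry congruence of \eqref{eqn thms CO} with $k\mapsto n$, $s\mapsto j$, namely $\binom{n}{j}\equiv(-1)^{j}\binom{p-1-n+j}{j}\pmod p$, equivalently $\binom{p-1-n+j}{j}\equiv(-1)^{j}\binom{n}{j}$. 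Substituting gives $S\equiv(-1)^{j}\binom{m}{M-j}\binom{n}{j}\pmod p$, which is precisely the left‐hand side of the theorem.

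The only genuinely delicate part is the index bookkeeping in the degenerate cases, and that is where I would be most careful. If $M-j>m$ then $\binom{m}{M-j}=0$ kills both sides at once, so we may assume $M-j\le m$, i.e.\ $m-M+j\ge0$, which legitimizes the Vandermonde step. For the last step, \eqref{eqn thms CO} is stated only for $j\le n$; when $j>n$ the left‐hand side of the theorem vanishes since $\binom{n}{j}=0$, and I need to check that $S$ also vanishes mod $p$. In that case $p-1-n+j\ge p$ while $0\le j\le M<p-1$, so writing $p-1-n+j=p+(j-n-1)$ with $0\le j-n-1\le p-1$, Lucas' theorem gives $\binom{p-1-n+j}{j}\equiv\binom{1}{0}\binom{j-n-1}{j}=0\pmod p$, so $S\equiv0$ as needed. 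Hence the displayed congruence holds in all cases.

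As consistency checks, specializing to $s=1$ and $s=2$ recovers exactly \thmref{thm triple cong1} and \thmref{thm triple cong2}, and to $s=0$ recovers the classical identity $\binom{m}{M}\binom{M}{j}=\binom{m}{M-j}\binom{m-M+j}{j}$; moreover one could alternatively prove the theorem by induction on $s$ via a Pascal‐type recurrence linking the $s$‐ and $(s-1)$‐versions, but the Vandermonde route above is shorter and uniform. I expect no serious obstacle beyond the boundary‐index verification just described.
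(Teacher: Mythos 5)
Your proposal is correct and follows essentially the same route as the paper's proof: cancellation identity to pull out $\binom{m}{M-j}$, Vandermonde convolution to collapse the $k$-sum to $\binom{p-1-n+j}{j}$, then the second semi-symmetry congruence of \eqref{eqn thms CO}. Your extra care with the degenerate indices (in particular the case $j>n$, handled via Lucas' theorem) is a welcome refinement of, not a departure from, the paper's argument.
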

\begin{proof}
Let $m$, $n$, $s$, $M$, $j$ and $p$ be as in the theorem. By the cancellation identity,
\begin{equation*}
\begin{split}
&\sum_{k=0}^{s}\binom{s}{k}\binom{m}{M-k}\binom{M-k}{j-k} 
 = \sum_{k=0}^{s}\binom{s}{k}\binom{m}{j-k}\binom{m-j+k}{M-j},\\
& = \sum_{k=0}^{s}\binom{s}{k}\binom{m}{m-j+k}\binom{m-j+k}{M-j},  \quad \text{by the symmetry property}\\
& = \sum_{k=0}^{s}\binom{s}{k}\binom{m}{M-j}\binom{m-M+j}{m+k-M},  \quad \text{by the cancellation identity}\\
& = \binom{m}{M-j} \sum_{k=0}^{s}\binom{s}{k}\binom{m-M+j}{m+k-M},  \\
& = \binom{m}{M-j} \sum_{k=0}^{s}\binom{s}{k}\binom{m-M+j}{j-k},  \quad \text{by the symmetry property}\\
& = \binom{m}{M-j} \sum_{k=0}^{j}\binom{s}{k}\binom{m-M+j}{j-k}, \quad \text{as $\binom{s}{k}=0$ if $k>s$
and we need $j \geq k$}\\
& = \binom{m}{M-j} \binom{s+m-M+j}{j}, \quad \text{by Vandermonde's identity}\\
& = \binom{m}{M-j} \binom{p-1-n+j}{j}, \quad \text{by the definition of $M$}\\
& \equiv (-1)^j\binom{m}{M-j} \binom{n}{j} \, \, \, mod \, \, p, \quad 
\text{by the 2nd congruence in \eqref{eqn thms CO}}.
\end{split}
\end{equation*}
This completes the proof.
\end{proof}

\begin{corollary}\label{cor triple comp0}
Let $p$ be a prime number, and let $a, \, b, \, m, \, n \in \{1, \, 2, \, \ldots, \, p-1\}$ with $a \neq b$. Let $M=m+n-(p-1)$. For every $0 \leq j \leq M$, we have the following congruences:
\begin{equation*}\label{eqn cor triple cong0a}
\begin{split}
(-1)^j\binom{m}{M-j}\binom{n}{j} \equiv \binom{m}{M}\binom{M}{j} \, \, \, mod \, \, p
\end{split}
\end{equation*}
Moreover,
\begin{equation*}\label{eqn cor triple comp0}
\begin{split}
\sum_{j=0}^{M}\binom{m}{M-j}\binom{n}{j}a^{M-j}b^j \equiv (a-b)^{M} \binom{m}{p-n-1}  \, \, \, mod \, \, p
\end{split}
\end{equation*}
\end{corollary}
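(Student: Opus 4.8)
The plan is to obtain both statements as immediate specializations of Theorem~\ref{thm triple cong3}. For the first congruence I would simply set $s=0$ in Theorem~\ref{thm triple cong3}: the right-hand side $\sum_{k=0}^{s}\binom{s}{k}\binom{m}{M-k}\binom{M-k}{j-k}$ then collapses to its single $k=0$ term $\binom{m}{M}\binom{M}{j}$, and with $M=m+n-(p-1)$ this is exactly $(-1)^j\binom{m}{M-j}\binom{n}{j}\equiv\binom{m}{M}\binom{M}{j}$. One subtlety: Theorem~\ref{thm triple cong3} is stated under $0\le M<p-1$, whereas here $M=m+n-(p-1)$ can equal $p-1$, namely when $m=n=p-1$. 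This boundary case I would check directly, using $\binom{p-1}{i}\equiv(-1)^i$ modulo $p$ (a consequence of \eqref{eqn thms CO}), which makes both sides congruent to $(-1)^j$; alternatively, one observes that the chain of identities in the proof of Theorem~\ref{thm triple cong3} never genuinely uses $M<p-1$ once $s=0$.

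For the ``Moreover'' part I would start from the first congruence, multiply both sides by $a^{M-j}(-b)^j$, and use $(-1)^j(-b)^j=b^j$ to turn the left-hand side into $\binom{m}{M-j}\binom{n}{j}a^{M-j}b^j$. Summing over $0\le j\le M$ gives
$$\sum_{j=0}^{M}\binom{m}{M-j}\binom{n}{j}a^{M-j}b^j\equiv\binom{m}{M}\sum_{j=0}^{M}\binom{M}{j}a^{M-j}(-b)^j=\binom{m}{M}(a-b)^{M}$$
by the Binomial Theorem. Finally, since $M+(p-1-n)=m$, the symmetry of binomial coefficients gives $\binom{m}{M}=\binom{m}{m-M}=\binom{m}{p-1-n}=\binom{m}{p-n-1}$, which is the asserted closed form.

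The hard part is essentially nonexistent: everything reduces to reading off the $s=0$ instance of Theorem~\ref{thm triple cong3} and one application of the Binomial Theorem. The only place demanding attention is the excluded boundary value $M=p-1$, forced by $m=n=p-1$, which is settled by the one-line check indicated above.
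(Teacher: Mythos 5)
Your proof is correct and follows essentially the same route as the paper: specialize the general congruence of Theorem~\ref{thm triple cong3} to $s=0$ (the paper's citation of Theorem~\ref{thm triple cong2} there is evidently a typo for Theorem~\ref{thm triple cong3}), handle the excluded boundary case $m=n=p-1$ separately, and then deduce the ``Moreover'' part by multiplying by $a^{M-j}(-b)^j$, summing, and applying the Binomial Theorem together with the symmetry $\binom{m}{M}=\binom{m}{p-1-n}$. Your explicit verification of the $M=p-1$ case via $\binom{p-1}{i}\equiv(-1)^i$ is a welcome elaboration of the paper's terse ``follows directly.''
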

\begin{proof}
The first congruence follows directly when $m=n=p-1$. Otherwise, it follows from \thmref{thm triple cong2} by taking $s=0$. We use the first congruence to obtain

\begin{equation*}\label{eqn cor triple comp00}
\begin{split}
&\sum_{j=0}^{M}\binom{m}{M-j}\binom{n}{j}a^{M-j}b^j \equiv 
\sum_{j=0}^{M}(-1)^j\binom{m}{M}\binom{M}{j}a^{M-j}b^j
\equiv \binom{m}{M} \sum_{j=0}^{M}\binom{M}{j}a^{M-j}(-b)^j\\
& =  \binom{m}{M} (a-b)^{M}, \quad \text{by Binomial Theorem}\\
& = \binom{m}{p-n-1} (a-b)^{m+n-(p-1)}, \quad \text{by the definition of $M$ and the symmetry property}.
\end{split}
\end{equation*}
This completes the proof.
\end{proof}
Having derived \thmref{thm triple cong3}, we can generalize the second part of \corref{cor triple comp0}, \thmref{thm triple comp1} and \thmref{thm triple comp2} as follows: 
\begin{theorem}\label{thm triple comp3}
Let $p$ be a prime number, and let $a, \, b, \, m, \, n, \, s \in \{1, \, 2, \, \ldots, \, p-1\}$ with $a \neq b$.
Let $M=m+n+s-(p-1) < p-1$. We have the following congruence modulo $p$:
\begin{equation*}\label{eqn thm triple comp3}
\begin{split}
\sum_{j=0}^{M}\binom{m}{M-j}\binom{n}{j}a^{M-j}b^j 
\equiv \sum_{k=0}^{s}\binom{m}{M-k}\binom{s}{k}(a-b)^{M-k}(-b)^k.
\end{split}
\end{equation*}
\end{theorem}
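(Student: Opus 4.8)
The plan is to reduce the whole statement to the coefficient-level congruence already proved in \thmref{thm triple cong3}. First I would dispose of the degenerate range: if $M<0$ the left-hand sum is empty while on the right every binomial coefficient $\binom{m}{M-k}$ has negative upper-minus-lower index and hence vanishes, so both sides are $0$. Thus one may assume $0\le M<p-1$, which is exactly the hypothesis under which \thmref{thm triple cong3} is valid (it holds in particular for the present $m,n,s\in\{1,\dots,p-1\}$, a subset of the range $\{0,\dots,p-1\}$ allowed there).

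Next, I would rewrite each summand on the left by inserting $(-1)^{2j}=1$, so that
$$\binom{m}{M-j}\binom{n}{j}a^{M-j}b^j=\Big[(-1)^j\binom{m}{M-j}\binom{n}{j}\Big]\,a^{M-j}(-b)^j,$$
and then replace the bracketed factor, via \thmref{thm triple cong3}, by $\sum_{k=0}^{s}\binom{s}{k}\binom{m}{M-k}\binom{M-k}{j-k}$. This turns the left-hand side into a double sum over $j$ and $k$; interchanging the order of summation extracts $\binom{s}{k}\binom{m}{M-k}$ from the inner sum and leaves $\sum_{j=0}^{M}\binom{M-k}{j-k}\,a^{M-j}(-b)^j$.

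Finally I would evaluate this inner sum by the index shift $i=j-k$: since $\binom{M-k}{j-k}$ is supported on $k\le j\le M$, the variable $i$ runs from $0$ to $M-k$, and the sum becomes $(-b)^k\sum_{i=0}^{M-k}\binom{M-k}{i}a^{(M-k)-i}(-b)^i=(-b)^k(a-b)^{M-k}$ by the Binomial Theorem. Substituting back produces precisely $\sum_{k=0}^{s}\binom{m}{M-k}\binom{s}{k}(a-b)^{M-k}(-b)^k$, which is the asserted right-hand side. I do not expect a genuine obstacle: the proof is a direct application of \thmref{thm triple cong3} followed by a summation interchange and a Binomial-Theorem collapse. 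The only points demanding care are the bookkeeping for the range of $M$ (the trivial case $M<0$, and the observation that the stated hypothesis $M<p-1$ is exactly what licenses the use of \thmref{thm triple cong3}) and respecting the support $k\le j\le M$ of $\binom{M-k}{j-k}$ when shifting the index.
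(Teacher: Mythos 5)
Your proposal is correct and follows essentially the same route as the paper's own proof: apply \thmref{thm triple cong3} to replace $(-1)^j\binom{m}{M-j}\binom{n}{j}$, interchange the order of summation, shift the inner index by $k$, and collapse the inner sum to $(a-b)^{M-k}(-b)^k$ via the Binomial Theorem. Your explicit treatment of the degenerate case $M<0$ is a small additional point of care not spelled out in the paper, but otherwise the arguments coincide.
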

\begin{proof}
By using \thmref{thm triple cong3},
\begin{equation*}\label{eqn thm triple comp3a}
\begin{split}
&\sum_{j=0}^{M}\binom{m}{M-j}\binom{n}{j}a^{M-j}b^j 
\equiv \sum_{j=0}^{M}\sum_{k=0}^{s} (-1)^j \binom{s}{k}\binom{m}{M-k}\binom{M-k}{j-k}  a^{M-j}b^j\\
& = \sum_{k=0}^{s}  \binom{s}{k}\binom{m}{M-k} \sum_{j=0}^{M} \binom{M-k}{j-k}  a^{M-j}(-b)^j\\
& = \sum_{k=0}^{s}  \binom{s}{k}\binom{m}{M-k} \sum_{j=k}^{M} \binom{M-k}{j-k}  a^{M-j}(-b)^j\\
& = \sum_{k=0}^{s}  \binom{s}{k}\binom{m}{M-k} \sum_{j=0}^{M-k} \binom{M-k}{j}  a^{M-k-j}(-b)^{j+k}\\
& = \sum_{k=0}^{s}  \binom{s}{k}\binom{m}{M-k} (a-b)^{M-k} (-b)^{k},
\end{split}
\end{equation*}
where the last equality follows from Binomial Theorem.
\end{proof}


\begin{remark}\label{rem sums}
Given $a, \, b, \, c, \, m, \, n, \, s \in \{1, \, 2, \, \ldots, \, p-1\}$, we have
$$\sum_{k=0}^{p-1} (a+k)^m (b+k)^n (c+k)^s \equiv  \sum_{k=0}^{p-1} (a-c+k)^m (b-c+k)^n k^s \, \, \, mod \, \, p.$$
Moreover, for each $d \in \{1, \, 2, \, \ldots, \, p-1\}$ we have
$$\sum_{k=0}^{p-1} (a+k)^m (b+k)^n (c+k)^s \equiv \sum_{k=0}^{p-1} (a-d+k)^m (b-d+k)^n (c-d+k)^s \, \, \, mod \, \, p.$$
\end{remark}

\begin{remark}\label{rem triple all}
Let $p$ be a prime number, and let  $m, \, n, \, s \in \{1, \, 2, \, \ldots, \, p-1\}$. For any distinct integers $a, \, b, \, c \in \{0, \, 1, \, \ldots, \, p-1\}$, we have the following congruences modulo $p$:
\begin{equation*}\label{eqn rem triple all}
\begin{split}
&\sum_{\substack{k=0 \\ k \not \equiv -c}}^{p-1} \frac{(a+k)^m (b+k)^n} {(c+k)^s} \equiv \sum_{k=0}^{p-1} (a+k)^m (b+k)^n (c+k)^{p-1-s}, \quad \text{if $s \neq p-1$}.\\ 
&\sum_{\substack{k=0 \\ k \not \equiv -b \\ k \not \equiv -c}}^{p-1} \frac{(a+k)^m } {(b+k)^n (c+k)^s} \equiv \sum_{k=0}^{p-1} (a+k)^m (b+k)^{p-1-n} (c+k)^{p-1-s}, \quad \text{if $n \neq p-1$ and $ s \neq p-1$}.
\end{split}
\end{equation*}
Moreover, if $m \neq p-1$, $n \neq p-1$ and $s \neq p-1$, we have
$$\sum_{\substack{k=0 \\ k \not \equiv -a \\ k \not \equiv -b, \, \, k \not \equiv -c}}^{p-1} \frac{1} {(a+k)^m (b+k)^n (c+k)^s } \equiv \sum_{k=0}^{p-1} (a+k)^{p-1-m} (b+k)^{p-1-n} (c+k)^{p-1-s}.$$
\end{remark}
Note that each of these three summations can be computed by using \remref{rem sums}, \thmref{thm triple power1},
\thmref{thm triple power2}, \thmref{thm triple power2s2} and \thmref{thm triple power2sg}.
For example,
\begin{corollary}\label{cor triple all}
Let $p$ be a prime number, and let  $m, \, n, \, s \in \{1, \, 2, \, \ldots, \, p-1\}$. For any distinct integers $a, \, b, \, c \in \{0, \, 1, \, \ldots, \, p-1\}$, we have the following congruences modulo $p$:
\begin{equation*}\label{eqn cor triple all}
\begin{split}
&\sum_{\substack{k=0 \\ k \not \equiv -c}}^{p-1} \frac{(a+k)^m (b+k)^n} {(c+k)^s} \equiv 0, \quad \text{if $m+n < s \neq p-1$}.\\ 
&\sum_{\substack{k=0 \\ k \not \equiv -b, \, \, k \not \equiv -c}}^{p-1} \frac{(a+k)^m } {(b+k)^n (c+k)^s} \equiv 0, \quad \text{if $p-1<n+s-m$, $n \neq p-1$ and $ s \neq p-1$}.
\end{split}
\end{equation*}
Moreover, if $2(p-1)<m+n+s$, $m \neq p-1$, $n \neq p-1$ and $s \neq p-1$, we have
$$\sum_{\substack{k=0 \\ k \not \equiv -a \\ k \not \equiv -b, \, \, k \not \equiv -c}}^{p-1} \frac{1} {(a+k)^m (b+k)^n (c+k)^s } \equiv 0.$$
\end{corollary}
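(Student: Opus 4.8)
The plan is to reduce each of the three ratio sums to a sum of the form $\sum_{k=0}^{p-1}(A+k)^{M_1}(B+k)^{M_2}k^{M_3}$ covered by \thmref{thm triple power1}, and then to invoke the vanishing case $M_1+M_2+M_3<p-1$ of that theorem (equivalently, the first line of \corref{cor triple power1}).

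First I would apply \remref{rem triple all}. Using the hypotheses that the exponents appearing in the denominators are $\neq p-1$, this rewrites the first sum as $\sum_{k=0}^{p-1}(a+k)^m(b+k)^n(c+k)^{p-1-s}$, the second as $\sum_{k=0}^{p-1}(a+k)^m(b+k)^{p-1-n}(c+k)^{p-1-s}$, and the third as $\sum_{k=0}^{p-1}(a+k)^{p-1-m}(b+k)^{p-1-n}(c+k)^{p-1-s}$. In each case the three exponents all lie in $\{1,\ldots,p-1\}$: the ones among $m,n,s$ that survive do so by hypothesis, and each exponent of the form $p-1-j$ with $1\le j\le p-2$ lies in $\{1,\ldots,p-2\}$, the hypotheses of the corollary guaranteeing that the relevant $j$'s are $\le p-2$.

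Next I would translate the summation index via the bijection $k\mapsto k-c$ on the residues modulo $p$ (see \remref{rem sums}), bringing each product sum into the shape $\sum_{k=0}^{p-1}(a-c+k)^{M_1}(b-c+k)^{M_2}k^{M_3}$. Since $a,b,c$ are pairwise distinct, $a-c$ and $b-c$ are nonzero modulo $p$ and satisfy $a-c\not\equiv b-c$, so \thmref{thm triple power1} is applicable with $a-c$ and $b-c$ in the roles of $a$ and $b$.

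Finally I would check the exponent sums: for the first sum $M_1+M_2+M_3=m+n+(p-1-s)$, which is $<p-1$ exactly when $m+n<s$; for the second, $m+(p-1-n)+(p-1-s)<p-1$ exactly when $p-1<n+s-m$; for the third, $(p-1-m)+(p-1-n)+(p-1-s)<p-1$ exactly when $2(p-1)<m+n+s$. These are precisely the hypotheses of the corollary, so the first case of \thmref{thm triple power1} forces each sum to be $\equiv 0$. I expect no real obstacle; the only point meriting a moment's attention is verifying that the shifted bases $a-c$ and $b-c$ are nonzero and distinct, which is immediate from the pairwise distinctness of $a,b,c$.
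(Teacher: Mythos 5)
Your proposal is correct and follows exactly the route the paper intends (the corollary is stated without a written proof, only the remark that it follows from \remref{rem sums}, \remref{rem triple all} and \thmref{thm triple power1}): convert each ratio to a product via Fermat's Little Theorem, shift by $c$, and invoke the vanishing case of \thmref{thm triple power1}, with the hypotheses $m,n,s\neq p-1$ ensuring the complementary exponents stay in range and the excluded terms contribute zero. The arithmetic verification of the three exponent-sum conditions is also right.
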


\textbf{Example:} Let $p=17$. We want to compute $\displaystyle{I=\sum_{\substack{k=0 \\ k \not \equiv -3, \, \, k \not \equiv -8}}^{p-1} \frac{(7+k)^9 } {(3+k)^{13} (8+k)^8}}$. 
We obtain the following congruences modulo $p$ by using \remref{rem sums} and \remref{rem triple all}:
$$
I \equiv  \sum_{k=0}^{p-1} (3+k)^3(8+k)^8 (7+k)^9 \equiv \sum_{k=0}^{p-1} (13+k)^3(1+k)^8 k^9.
$$
Then, we use \thmref{thm triple power2sg} with $M=3+8+9-(17-1)=4$ to derive 
$$
I \equiv -\sum_{j=4-3}^8 \binom{3}{4-j} \binom{8}{j} 13^{4-j} 1^j\\
\equiv -\sum_{j=1}^4 \binom{3}{4-j} \binom{8}{j} (-4)^{4-j} \equiv 8.\\
$$
\textbf{Example:} Let $p=23$. By \corref{cor triple all} we have 
$$\displaystyle{\sum_{\substack{k=0 \\ k \not \equiv -7, \, \, k \not \equiv -13, \, \, k \not \equiv -18}}^{p-1} \frac{1} {(7+k)^{16} (13+k)^{17} (18+k)^{19}}} \equiv 0.$$
%
The following result follows from \thmref{thm triple power2sg}:
\begin{corollary}\label{cor triple constant}
Let $p$ be a prime number, and let  $m, \, n, \, s \in \{1, \, 2, \, \ldots, \, p-1\}$. For any distinct integers $a, \, b \in 
\{1, \, \ldots, \, p-1\}$, we have the following congruence modulo $p$:
\begin{equation*}\label{eqn cor triple constant}
\begin{split}
\sum_{k=0}^{p-1} (a+k)^m (b+k)^n k^s \equiv b^{M} \sum_{k=0}^{p-1} (ab^{-1}+k)^m (1+k)^n k^s, 
\end{split}
\end{equation*}
where $M=m+n+s-(p-1)$.
\end{corollary}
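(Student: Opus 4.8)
The plan is a direct change of summation variable, which makes the identity transparent; although the corollary is stated as a consequence of \thmref{thm triple power2sg}, this route does not even need that theorem's case analysis. Since $b \in \{1, \dots, p-1\}$ we have $p \nmid b$, so multiplication by $b$ is a bijection of $\ZZ/p\ZZ$; and because the sum $\sum_{k=0}^{p-1}(a+k)^m(b+k)^nk^s$ depends only on the residues of its terms modulo $p$, replacing the index $k$ by $bk$ throughout will leave it unchanged modulo $p$. Writing $b^{-1}$ for the inverse of $b$ modulo $p$ and factoring $b$ out of each parenthesis via $a+bk \equiv b(ab^{-1}+k)$, $b+bk = b(1+k)$ and $bk = b\cdot k$, the reindexed sum $\sum_{k=0}^{p-1}(a+bk)^m(b+bk)^n(bk)^s$ becomes $b^{m+n+s}\sum_{k=0}^{p-1}(ab^{-1}+k)^m(1+k)^nk^s$, so
\begin{equation*}
\sum_{k=0}^{p-1}(a+k)^m(b+k)^nk^s \equiv b^{m+n+s}\sum_{k=0}^{p-1}(ab^{-1}+k)^m(1+k)^nk^s \pmod p .
\end{equation*}
One then finishes by noting $b^{p-1} \equiv 1 \pmod p$ from \eqref{eqn FLT}, so that $b^{m+n+s} \equiv b^{m+n+s-(p-1)} = b^{M} \pmod p$, which is exactly the claimed congruence.

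So the steps, in order, are: first, observe that $b$ is invertible modulo $p$ and reindex $k \mapsto bk$, a bijection of residue classes, using that the sum depends only on residues; second, extract the scalar $b$ from each of the three factors, accumulating the total power $b^{m+n+s}$ and leaving $\sum_k(ab^{-1}+k)^m(1+k)^nk^s$; third, collapse $b^{m+n+s}$ to $b^M$ via Fermat's Little Theorem. I do not expect a real obstacle here. The only points needing a word of care are that $ab^{-1}$ must be read as a residue class — harmless, since the right-hand sum depends only on $ab^{-1} \bmod p$ — and that the last step uses $p \nmid b$, which holds because $b \in \{1,\dots,p-1\}$; when $M<0$ one interprets $b^M$ via $b^{-1} \equiv b^{p-2}$, and the congruence $b^{m+n+s}\equiv b^M$ still holds since $b^{p-1}\equiv 1$. (Note this argument does not actually use $a \neq b$.)

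Alternatively, the statement can be derived exactly as the corollary indicates, from \thmref{thm triple power2sg} itself: that theorem writes $\sum_{k=0}^{p-1}(a+k)^m(b+k)^nk^s$ modulo $p$ as an explicit expression in $a$ and $b$, split into the cases $m=n=s=p-1$, $m=p-1$ with $(n,s)\neq(p-1,p-1)$, and the generic case. Substituting $a\mapsto ab^{-1}$, $b\mapsto 1$ into that expression and multiplying by $b^M$ recovers the original expression: in the generic case because $M-R = p-1$, so each prefactor satisfies $b^{M-(R-j)} = b^{(p-1)+j} \equiv b^j$; in the $m=p-1$ case because $b^{n}(1-ab^{-1})^n\cdot b^{s}(-ab^{-1})^s = (b-a)^n(-a)^s$ together with $b^{n+s}\equiv b^{n+s-(p-1)}$; and the case $m=n=s=p-1$ is immediate since then $b^M = b^{2(p-1)}\equiv 1$. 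This route is pure bookkeeping and uses only results already established, but the reindexing argument above is shorter and covers all cases uniformly. In either case the reindexing step is the only genuine idea, and it is routine, so that is where I would keep the write-up brief.
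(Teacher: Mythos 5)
Your primary argument (reindex $k\mapsto bk$, factor $b$ out of each of the three factors to get $b^{m+n+s}$, then reduce to $b^{M}$ by Fermat's Little Theorem) is correct, and it is a genuinely different route from the one the paper intends: the paper offers no written proof but asserts the corollary as a consequence of Theorem~\ref{thm triple power2sg}, i.e., the route you sketch second, in which one substitutes $a\mapsto ab^{-1}$, $b\mapsto 1$ into the explicit case-by-case formula and checks that multiplying by $b^{M}$ restores the original expression. Your change-of-variables proof is shorter, avoids the case analysis entirely, does not use the hypothesis $a\neq b$, and scales verbatim to the $n$-factor version in Corollary~\ref{cor general constant}; what the paper's route buys instead is a consistency check of the formula in Theorem~\ref{thm triple power2sg} itself. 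The two small points you flag --- reading $ab^{-1}$ as a residue class and interpreting $b^{M}$ via $b^{-1}\equiv b^{p-2}$ when $M<0$ --- are handled correctly, so there is no gap.
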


\section{The sums of ratios or products in general}\label{sec general}
In this section, we consider the sums of the products or ratios of the terms $(a_1+k)^{m_1}$, ..., $(a_n+k)^{m_n}$.
All congruences in this section are considered modulo a prime number $p$. At the end, we express such sums in terms of the certain coefficients of the polynomial $(b_1+x)^{m_1} \cdots (b_{n-1}+x)^{m_{n-1}}$. That is, we relate such sums to Elementary Symmetric Functions. Computational experiments done in \cite{MMA} enabled us to establish such relations, which were not easy to see otherwise.

Note that we can extend the proof of \thmref{thm triple power1} to derive the following generalized result which should be considered along with the extension of \remref{rem triple all}:
\begin{theorem}\label{thm general power1}
Given a prime number $p$, let $m_{i}$ and $a_{i}$ be integers with $1 \leq m_i \leq p-1$ and 
$0 \leq a_i \leq p-1$ for each $i=1, \, 2, \, \ldots, \, n $. If $a_i \neq a_j$ when $i \neq j$, we have the following congruences modulo $p$:
\begin{equation*}
\begin{split}
\sum_{k=0}^{p-1} (a_1+k)^{m_1}  \ldots  (a_n+k)^{m_n} \equiv
&\begin{cases} 
0,  \quad \text{if $m_1+\cdots+m_n < p-1$}\\
-n,  \quad \text{if $m_1=m_2= \cdots =m_n=p-1$}\\
J,  \quad \text{otherwise},
\end{cases}
\end{split}
\end{equation*}
where $J$ is the summation given below with $b_{i'}=a_{i'}-a_n$ for each index $i'=1, \, 2, \, \ldots, \, n-1$, $M_i=m_1+\cdots+m_n-i(p-1)$ and $t$ is the integer such that $M_{t+1}<0 \leq M_t$.
\begin{equation*}
\begin{split}
J=-\sum_{\substack{ i=1}}^t \, \,
\sum_{\substack{j_1+\cdots+j_{n-1}=M_i \\ 0 \leq j_1 \leq m_1, \, \ldots, \, 0 \leq j_{n-1} \leq m_{n-1}}}
\binom{m_1}{j_1}  \cdots \binom{m_{n-1}}{j_{n-1}} b_1^{j_1} \cdots b_{n-1}^{j_{n-1}}.
\end{split}
\end{equation*} 
\end{theorem}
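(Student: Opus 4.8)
The plan is to imitate the proof of \thmref{thm triple power1} almost verbatim, replacing the two factors $(a+k)^m(b+k)^n$ by the $n-1$ factors $(a_1+k)^{m_1}\cdots(a_{n-1}+k)^{m_{n-1}}$ and keeping $(a_n+k)^{m_n}$ in the role of $k^s$. First I would expand each of the $n$ factors by the Binomial Theorem, writing $(a_i+k)^{m_i}=\sum_{i'=0}^{m_i}\binom{m_i}{i'}a_i^{m_i-i'}k^{i'}$, so that the whole sum becomes a multi-index sum of $\prod_i\binom{m_i}{i'_i}a_i^{m_i-i'_i}$ times $\sum_{k=0}^{p-1}k^{i'_1+\cdots+i'_n}$. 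By \thmref{thm power sum0}, the inner power sum vanishes modulo $p$ unless $(p-1)\mid(i'_1+\cdots+i'_n)$, in which case it equals $-1$; since $n\le i'_1+\cdots+i'_n\le m_1+\cdots+m_n\le n(p-1)$, the surviving multiples of $p-1$ are exactly $p-1,2(p-1),\ldots,t(p-1)$, where $t$ is determined by $M_{t+1}<0\le M_t$, plus the degenerate value $n(p-1)$ which forces all $m_i=p-1$.

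Next I would handle the two easy cases exactly as before: if $m_1+\cdots+m_n<p-1$ the inner sum is always $0$, giving the first branch; if all $m_i=p-1$ then every factor is $\equiv1$ off the excluded residues and the sum collapses to $\sum_{k\not\equiv -a_1,\ldots,-a_n}1\equiv -n$. For the generic branch, I would first translate by $-a_n$ using the substitution $k\mapsto k-a_n$ (which only permutes the residues mod $p$), so that the last factor becomes $k^{m_n}$ and the $i$-th factor becomes $(b_i+k)^{m_i}$ with $b_i=a_i-a_n$; this is the analogue of passing from $(a+k)^m(b+k)^n(c+k)^s$ to $(a-c+k)^m(b-c+k)^n k^s$ in \remref{rem sums}. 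Then for each fixed level $i$ with $1\le i\le t$, the constraint $(p-1)\mid(i'_1+\cdots+i'_{n-1}+i'_n)$ with each $i'_\ell\le m_\ell$ forces $i'_1+\cdots+i'_{n-1}+i'_n=i(p-1)$; summing over the (at most $p-1$) valid values of $i'_n$ and using the identity $m_\ell-i'_\ell$ paired against the complementary binomial $\binom{m_\ell}{m_\ell-i'_\ell}$, together with the relabelling $j_\ell := m_\ell - i'_\ell$ (or directly $j_\ell := i'_\ell$) and the substitution $i'_n=i(p-1)-(j_1+\cdots+j_{n-1})$, the coefficient of the level-$i$ contribution becomes $-\binom{m_1}{j_1}\cdots\binom{m_{n-1}}{j_{n-1}}b_1^{j_1}\cdots b_{n-1}^{j_{n-1}}$ summed over $j_1+\cdots+j_{n-1}=M_i$ with $0\le j_\ell\le m_\ell$. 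Summing over $i=1,\ldots,t$ yields exactly $J$.

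The main obstacle is bookkeeping the index ranges: in \thmref{thm triple power1} the two-variable case already required splitting the level-one contribution into pieces according to whether $M-(n-j)\le m$, and for $n$ factors one must be careful that the constraint $i'_n=i(p-1)-(j_1+\cdots+j_{n-1})$ satisfies $0\le i'_n\le m_n$, i.e. that the omitted terms (where this would be violated) genuinely vanish because the corresponding binomial $\binom{m_\ell}{j_\ell}$ is zero when $j_\ell>m_\ell$ or $j_\ell<0$. I would argue that the sum $\sum_{j_1+\cdots+j_{n-1}=M_i,\,0\le j_\ell\le m_\ell}$ already incorporates all the surviving terms: whenever each $j_\ell\in[0,m_\ell]$ and they sum to $M_i=m_1+\cdots+m_n-i(p-1)$, the complementary exponent $i'_n=m_n-(M_i-(m_1+\cdots+m_{n-1}-(j_1+\cdots+j_{n-1})))$ automatically lies in $[0,m_n]$, so no extra boundary terms appear. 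Once this range-compatibility is checked, the rest is a routine change of variables and an application of \thmref{thm power sum0}, and the congruences for the three branches follow.
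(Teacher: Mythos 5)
Your proposal is correct and follows essentially the same route as the paper: translate by $-a_n$ (the generalized form of \remref{rem sums}) so the last factor becomes $k^{m_n}$, expand the remaining $n-1$ factors by the Binomial Theorem, and apply \thmref{thm power sum0} to the inner power sum, whose exponent lies between $m_n\geq 1$ and $m_1+\cdots+m_n$, so that only the levels $i(p-1)$ with $1\leq i\leq t$ survive and yield $J$, with the two degenerate branches handled exactly as you describe. The only cosmetic wrinkle is that after the translation there is no index $i'_n$ to sum over or range-check (the last factor contributes the fixed exponent $m_n$), so the boundary-compatibility worry in your last paragraph is vacuous; the constraint $j_1+\cdots+j_{n-1}=M_i$ with $0\leq j_\ell\leq m_\ell$ is all that remains, exactly as in the paper.
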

\begin{proof}
Let $\displaystyle{I:=\sum_{k=0}^{p-1} (a_1+k)^{m_1}  \ldots  (a_n+k)^{m_n}}$.

When $m_1=m_2= \cdots =m_n=p-1$, we obtain the second congruence in the theorem by \eqref{eqn FLT} as follows:
$$
I
\equiv \sum_{\substack{0 \leq k \leq p-1 \\ k \not \equiv a_1, \, \cdots, \, k \not \equiv a_n }} 1  \equiv -n.
$$
Suppose at least one of  $m_i$'s is less than $p-1$. We first note that the generalized version of \remref{rem sums} holds and so we have the following first congruence, where we set $b_{i'}=a_{i'}-a_n$ for each index $i'=1, \, 2, \, \ldots, \, n-1$:
\begin{equation}\label{eqn general power1a}
\begin{split}
&I
\equiv \sum_{k=0}^{p-1} (b_1+k)^{m_1}  \cdots  (b_{n-1}+k)^{m_{n-1}}k^{m_n},\\
&\equiv \sum_{k=0}^{p-1} k^{m_n} \sum_{j_1=0}^{m_1} \binom{m_1}{j_1} b_1^{j_1}k^{m_1-j_1} \ldots \sum_{j_{n-1}=0}^{m_{n-1}} \binom{m_{n-1}}{j_{n-1}} b_{n-1}^{j_{n-1}}k^{m_{n-1}-j_{n-1}}, \quad \text{by Binomial Theorem}\\
&\equiv \sum_{j_1=0}^{m_1}  \cdots   \sum_{j_{n-1}=0}^{m_{n-1}} \binom{m_1}{j_1}  \cdots \binom{m_{n-1}}{j_{n-1}} b_1^{j_1} \cdots b_{n-1}^{j_{n-1}} \sum_{k=0}^{p-1} k^{m_1-j_1+m_2-j_2+\cdots+m_{n-1}-j_{n-1}+m_n}.\\
\end{split}
\end{equation}
Note that $1 \leq m_n \leq m_1-j_1+m_2-j_2+\cdots+m_{n-1}-j_{n-1}+m_n \leq m_1+\cdots +m_n$. Thus, when 
$m_1+\cdots +m_n < p-1$, $I \equiv 0$ by  \thmref{thm power sum0}. This proves the first congruence in the theorem. 

When $m_1+\cdots +m_n \geq p-1$, we set $M_i=m_1+\cdots +m_n -i(p-1)$ for each integer $i$ providing that $M_i \geq 0$. Let $t$ be  the largest of such $i$, i.e., $M_{t+1} < 0 \leq M_t$. Then we apply \thmref{thm power sum0} in \eqref{eqn general power1a} to derive
\begin{equation*}\label{eqn general power1b}
\begin{split}
I&\equiv -\sum_{\substack{ i=1}}^t \, \,
\sum_{\substack{j_1+\cdots+j_{n-1}=M_i \\ 0 \leq j_1 \leq m_1, \, \ldots, \, 0 \leq j_{n-1} \leq m_{n-1}}}
\binom{m_1}{j_1}  \cdots \binom{m_{n-1}}{j_{n-1}} b_1^{j_1} \cdots b_{n-1}^{j_{n-1}}.
\end{split}
\end{equation*}
This completes the proof of the theorem.
\end{proof}
The following result is about the simple cases that are easy consequences of \thmref{thm general power1}:
\begin{corollary}\label{cor general power1}
Given a prime number $p$, let $m_{i}$ and $a_{i}$ be integers with $1 \leq m_i \leq p-1$ and 
$0 \leq a_i \leq p-1$ for each $i=1, \, 2, \, \ldots, \, n $. If $a_i \neq a_j$ when $i \neq j$, we have the following congruences modulo $p$:
\begin{equation*}\label{eqn cor general power1}
\begin{split}
\sum_{k=0}^{p-1} (a_1+k)^{m_1} \cdots (a_n+k)^{m_n} \equiv 
\begin{cases}
0,  &\quad \text{if $m_1+m_2+ \cdots + m_n < p-1$}\\
-1,  &\quad \text{if $m_1+m_2+ \cdots + m_n = p-1$}\\
-\sum_{j=1}^{n}m_j a_j, &\quad \text{if $m_1+m_2+ \cdots + m_n = p$}\\
-n,   &\quad \text{if $m_1=m_2= \cdots = m_n=p-1$},
\end{cases}
\end{split}
\end{equation*}
\end{corollary}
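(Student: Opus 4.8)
The plan is to derive \corref{cor general power1} as a direct specialization of \thmref{thm general power1}, reading off each case according to the size of the sum $m_1 + \cdots + m_n$. First I would dispose of the trivial cases: when $m_1 + \cdots + m_n < p-1$ the congruence is $0$ by the first case of \thmref{thm general power1}, and when $m_1 = m_2 = \cdots = m_n = p-1$ the congruence is $-n$ by the second case. These require no computation.

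For the intermediate cases I would invoke the third case of \thmref{thm general power1}, so the sum equals $J$, where (with $b_{i'} = a_{i'} - a_n$) the integer $t$ satisfies $M_{t+1} < 0 \le M_t$ and $M_i = m_1 + \cdots + m_n - i(p-1)$. When $m_1 + \cdots + m_n = p-1$ we have $M_1 = 0$, so $t = 1$ and the only multi-index with $j_1 + \cdots + j_{n-1} = 0$ is $j_1 = \cdots = j_{n-1} = 0$, contributing the single term $\binom{m_1}{0}\cdots\binom{m_{n-1}}{0} b_1^0 \cdots b_{n-1}^0 = 1$; hence $J = -1$. When $m_1 + \cdots + m_n = p$ we have $M_1 = 1$ (and $M_2 < 0$ since $n \ge 1$ forces $m_1 + \cdots + m_n \le n(p-1)$, but one checks $M_2 = p - 2(p-1) = 2 - p < 0$ for $p \ge 3$; the cases $p = 2, 3$ can be checked directly), so $t = 1$ and the multi-indices with $j_1 + \cdots + j_{n-1} = 1$ are exactly $j_{i'} = 1$ for a single $i' \le n-1$ and the rest zero; the $i'$-th such term is $\binom{m_{i'}}{1} b_{i'} = m_{i'}(a_{i'} - a_n)$. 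Summing, $J = -\sum_{i'=1}^{n-1} m_{i'}(a_{i'} - a_n) = -\sum_{i'=1}^{n-1} m_{i'} a_{i'} + a_n \sum_{i'=1}^{n-1} m_{i'}$. Since $\sum_{i'=1}^{n-1} m_{i'} = p - m_n \equiv -m_n \pmod p$, the last term is $\equiv -m_n a_n$, giving $J \equiv -\sum_{j=1}^{n} m_j a_j \pmod p$, as claimed.

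The main obstacle, such as it is, is purely bookkeeping: making sure the change from the $b_{i'}$-indexed sum back to the $a_j$-indexed sum is carried out correctly modulo $p$, in particular handling the asymmetry that \thmref{thm general power1} singles out the $n$-th coordinate. The identity $\sum_{i'=1}^{n-1} m_{i'} \equiv -m_n \pmod p$ (valid precisely because $m_1 + \cdots + m_n = p$) is what restores the symmetry of the final answer $-\sum_{j=1}^n m_j a_j$. No genuine number-theoretic difficulty arises beyond what is already packaged in \thmref{thm general power1} and \thmref{thm power sum0}.
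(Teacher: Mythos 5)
Your proof is correct and is precisely the intended argument: the paper offers no proof of this corollary beyond calling it an easy consequence of \thmref{thm general power1}, and your case-by-case specialization (the $M_1=0$ and $M_1=1$ computations of $J$, together with the observation that $\sum_{i'=1}^{n-1} m_{i'} \equiv -m_n \pmod p$ restores the symmetry in the $a_j$'s) fills in exactly that derivation. The only caveat is the degenerate prime $p=2$, where $M_2=0$ so $t=2$ and the third and fourth cases of the corollary overlap; you rightly set that aside for direct verification.
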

The following result is a generalization of \corref{cor triple constant}:
\begin{corollary}\label{cor general constant}
Given a prime number $p$, let $m_{i}$ and $b_{i}$ be integers with $1 \leq m_i \leq p-1$ and 
$1 \leq b_i \leq p-1$ for each $i=1, \, 2, \, \ldots, \, n-1 $. If $b_i \neq b_j$ when $i \neq j$, we have the following congruence modulo $p$:
\begin{equation*}\label{eqn cor general constant}
\begin{split}
\sum_{k=0}^{p-1} (b_1+k)^{m_1}  \cdots (b_{n-1}+k)^{m_{n-1}}k^{m_n} \equiv 
b_{n-1}^{M_1} \sum_{k=0}^{p-1} (c_1+k)^{m_1}  \cdots  (c_{n-2}+k)^{m_{n-2}} (1+k)^{m_{n-1}}k^{m_n}, 
\end{split}
\end{equation*}
where $M_1=m_1+m_2+\cdots+m_{n}-(p-1)$, and $c_i=b_{n-1}^{-1}b_{i}$ for each $i=1, \, 2, \ \ldots, \, n-2$.
\end{corollary}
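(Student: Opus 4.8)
The plan is to obtain this corollary from a single change of variables together with Fermat's Little Theorem, invoking \thmref{thm general power1} only to dispatch the degenerate range. Since $1\le b_{n-1}\le p-1$ we have $p\nmid b_{n-1}$, so multiplication by $b_{n-1}$ is a bijection of $\{0,1,\ldots,p-1\}$ modulo $p$. First I would substitute $k\equiv b_{n-1}\ell$ (modulo $p$) in the left-hand sum, with $\ell$ ranging over $\{0,1,\ldots,p-1\}$.

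After this substitution I would pull $b_{n-1}$ out of every base: for $i=1,\ldots,n-2$ one has $b_i+b_{n-1}\ell\equiv b_{n-1}(b_{n-1}^{-1}b_i+\ell)=b_{n-1}(c_i+\ell)$, while $b_{n-1}+b_{n-1}\ell=b_{n-1}(1+\ell)$, and the final factor is $(b_{n-1}\ell)^{m_n}=b_{n-1}^{m_n}\ell^{m_n}$. Collecting the scalar, the left-hand side becomes $b_{n-1}^{\,m_1+\cdots+m_n}$ times $\sum_{\ell=0}^{p-1}(c_1+\ell)^{m_1}\cdots(c_{n-2}+\ell)^{m_{n-2}}(1+\ell)^{m_{n-1}}\ell^{m_n}$, which is exactly the sum on the right. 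Since $b_{n-1}^{p-1}\equiv 1$ by \eqref{eqn FLT}, we have $b_{n-1}^{\,m_1+\cdots+m_n}\equiv b_{n-1}^{\,m_1+\cdots+m_n-(p-1)}=b_{n-1}^{M_1}$ whenever $M_1\ge 0$, which finishes that case.

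It remains to read the statement when $M_1<0$, i.e. $m_1+\cdots+m_n<p-1$; here the symbol $b_{n-1}^{M_1}$ is understood through the same exponent reduction. In this range the $n$ shifts $c_1,\ldots,c_{n-2},1,0$ are pairwise distinct (the $c_i$ are distinct because the $b_i$ are; $c_i\ne 1$ since $b_i\ne b_{n-1}$; and $c_i\ne 0$ and $1\ne 0$ since $b_i\ne 0$), and likewise the shifts $b_1,\ldots,b_{n-1},0$ of the left-hand sum are distinct, so \thmref{thm general power1} shows that both sides are $\equiv 0$ and the asserted congruence holds trivially. The only genuinely delicate point is this bookkeeping about the exponent of $b_{n-1}$ and the convention for $b_{n-1}^{M_1}$ when $M_1<0$; the substitution itself is routine, the real content being simply that dilating the summation index by the unit $b_{n-1}$ rescales each of the $n$ linear factors $b_1+k,\ldots,b_{n-1}+k,k$ by $b_{n-1}$.
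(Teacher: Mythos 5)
Your proof is correct, but it takes a genuinely different route from the one the paper intends. The paper states this corollary as the generalization of \corref{cor triple constant}, which it derives from the explicit formula of \thmref{thm triple power2sg}; the intended argument here is the analogous one via \thmref{thm general power1}: compute both sides by the formula $J$, observe that replacing $(b_1,\ldots,b_{n-1})$ by $(c_1,\ldots,c_{n-2},1)$ turns each monomial $b_1^{j_1}\cdots b_{n-1}^{j_{n-1}}$ with $j_1+\cdots+j_{n-1}=M_i$ into $b_{n-1}^{-(j_1+\cdots+j_{n-2})}b_1^{j_1}\cdots b_{n-2}^{j_{n-2}}$, and check that multiplying by $b_{n-1}^{M_1}$ restores the original monomial up to a factor $b_{n-1}^{(i-1)(p-1)+j_{n-1}}\equiv b_{n-1}^{j_{n-1}}$ by \eqref{eqn FLT}. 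You instead dilate the summation variable, $k\equiv b_{n-1}\ell$, which rescales every linear factor by the unit $b_{n-1}$ and produces the factor $b_{n-1}^{m_1+\cdots+m_n}\equiv b_{n-1}^{M_1}$ in one stroke. Your argument is shorter, avoids the explicit binomial-sum formula entirely, and makes transparent why only the single exponent $M_1$ (rather than all the $M_i$) appears on the right; the paper's route has the advantage of staying consistent with, and double-checking, the closed form $J$. Your care with the degenerate case $M_1<0$ and with the distinctness of the shifted parameters $c_1,\ldots,c_{n-2},1,0$ is appropriate (and, as you note, even that case is automatic once $b_{n-1}^{M_1}$ is read as a power of the inverse, since $b_{n-1}^{p-1}\equiv 1$ makes the exponent well defined modulo $p-1$).
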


We want to find alternative (possibly simpler) expression for the summation $J$ given in \thmref{thm general power1}. 
Doing various computational experiments in \cite{MMA} guided us to interpret this summation in some other ways. First, we provide an example:

Let $p=11$. We obtain the following results by doing symbolic computations for $a$ and $b$ in \cite{MMA}:
$$
\begin{array}{cccc}
s & p-1-s & 2(p-1)-s &\displaystyle{\sum_{k=0}^{p-1} (a+k)^7 (b+k)^7 k^s  \quad \text{modulo $p$}}         \\
\hline
 1 \quad & 9 \quad & 19 \quad & a^5+8 a^4 b+2 a^3 b^2+2 a^2 b^3+8 a b^4+b^5 \\
 2 \quad & 8 \quad & 18 \quad & 4 a^6+7 a^5 b+2 a^4 b^2+7 a^3 b^3+2 a^2 b^4+7 a b^5+4 b^6 \\
 3 \quad & 7 \quad & 17 \quad & -a^7+6 a^6 b+10 a^5 b^2+7 a^4 b^3+7 a^3 b^4+10 a^2 b^5+6 a b^6- b^7 \\
 4 \quad & 6 \quad & 16 \quad & 4 a^7 b+7 a^6 b^2+2 a^5 b^3+7 a^4 b^4+2 a^3 b^5+7 a^2 b^6+4 a b^7 \\
 5 \quad & 5 \quad & 15 \quad & a^7 b^2+8 a^6 b^3+2 a^5 b^4+2 a^4 b^5+8 a^3 b^6+a^2 b^7 \\
 6 \quad & 4 \quad & 14 \quad & \mathbf{-1}+9 a^7 b^3+8 a^6 b^4+10 a^5 b^5+8 a^4 b^6+9 a^3 b^7 \\
 7 \quad & 3 \quad & 13 \quad & \mathbf{4 a+4 b}+9 a^7 b^4+7 a^6 b^5+7 a^5 b^6+9 a^4 b^7 \\
 8 \quad & 2 \quad & 12 \quad & \mathbf{a^2+6 a b+b^2}+a^7 b^5+6 a^6 b^6+a^5 b^7 \\
 9 \quad & 1 \quad & 11 \quad & \mathbf{9 a^3+7 a^2 b+7 a b^2+9 b^3} + 4 a^7 b^6+4 a^6 b^7\\
 10 \quad & 0 \quad & 10 \quad & \mathbf{9 a^4+8 a^3 b+10 a^2 b^2+8 a b^3+9 b^4} +10 a^7 b^7\\
\end{array}
$$
Moreover,
$$
\begin{array}{cccc}
s & p-1-s & 2(p-1)-s &\displaystyle{\sum_{k=0}^{p-1} (a+k)^6 (b+k)^9 k^s  \quad \text{modulo $p$}}         \\
\hline
 1 \quad & 9 \quad & 19 \quad & 10 a^6+a^5 b+10 a^4 b^2+3 a^3 b^3+2 a^2 b^4+3 a b^5+4 b^6 \\
 2 \quad & 8 \quad & 18 \quad & 2 a^6 b+4 a^5 b^2+5 a^4 b^3+10 a^3 b^4+2 a^2 b^5+2 a b^6+8 b^7 \\
 3 \quad & 7 \quad & 17 \quad & 8 a^6 b^2+2 a^5 b^3+2 a^4 b^4+10 a^3 b^5+5 a^2 b^6+4 a b^7+2 b^8 \\
 4 \quad & 6 \quad & 16 \quad & 4 a^6 b^3+3 a^5 b^4+2 a^4 b^5+3 a^3 b^6+10 a^2 b^7+a b^8+10 b^9 \\
 5 \quad & 5 \quad & 15 \quad & \mathbf{-1}+ 6 a^6 b^4+3 a^5 b^5+5 a^4 b^6+6 a^3 b^7+8 a^2 b^8+5 a b^9\\
 6 \quad & 4 \quad & 14 \quad & \mathbf{5 a+2 b}+6 a^6 b^5+2 a^5 b^6+10 a^4 b^7+7 a^3 b^8+7 a^2 b^9 \\
 7 \quad & 3 \quad & 13 \quad & \mathbf{7 a^2+a b+8 b^2}+4a^6 b^6+4 a^5 b^7+8 a^4 b^8+2 a^3 b^9 \\
 8 \quad & 2 \quad & 12 \quad & \mathbf{2 a^3+8 a^2 b+4 a b^2+4 b^3}+8 a^6 b^7+a^5 b^8+7 a^4 b^9 \\
 9 \quad & 1 \quad & 11 \quad & \mathbf{7 a^4+7 a^3 b+10 a^2 b^2+2 a b^3+6 b^4}+2a^6 b^8+5 a^5 b^9 \\
 10 \quad & 0 \quad & 10 \quad & \mathbf{5 a^5+8 a^4 b+6 a^3 b^2+5 a^2 b^3+3 a b^4+6 b^5 }+10 a^6 b^9\\
\end{array}
$$
On the other hand, when $a=b=1$ we have the following equality which is the Vandermonde's identity: 
\begin{equation*}
\begin{split}
\sum_{j=0}^{M}\binom{m}{M-j}\binom{n}{j}a^{M-j}b^j =\sum_{j=0}^{M}\binom{m}{M-j}\binom{n}{j}=\binom{m+n}{M}.
\end{split}
\end{equation*}
Recall that the algebraic proof of Vandermonde's identity relies on the comparison of the coefficients of $(1+x)^m(1+x)^n$ and $(1+x)^{m+n}$.

Having these special cases and the computational results, we ask the following question: What is the relation between the computations given below and in the previous page?

For $p=11$, we have
$$
\begin{array}{cc}
j & \text{Coefficient of $x^j$ in $-(a+x)^7 (b+x)^7$ modulo $p$}         \\
\hline
0 \quad & 10 a^7 b^7 \\
1 \quad &  4 a^7 b^6+4 a^6 b^7 \\
2 \quad &  a^7 b^5+6 a^6 b^6+a^5 b^7 \\
3 \quad &  9 a^7 b^4+7 a^6 b^5+7 a^5 b^6+9 a^4 b^7 \\
4 \quad &  9 a^7 b^3+8 a^6 b^4+10 a^5 b^5+8 a^4 b^6+9 a^3 b^7 \\
5 \quad &  a^7 b^2+8 a^6 b^3+2 a^5 b^4+2 a^4 b^5+8 a^3 b^6+a^2 b^7 \\
6 \quad & 4 a^7 b+7 a^6 b^2+2 a^5 b^3+7 a^4 b^4+2 a^3 b^5+7 a^2 b^6+4 a b^7 \\
7 \quad &  - a^7+6 a^6 b+10 a^5 b^2+7 a^4 b^3+7 a^3 b^4+10 a^2 b^5+6 a b^6- b^7 \\
8 \quad &  4 a^6+7 a^5 b+2 a^4 b^2+7 a^3 b^3+2 a^2 b^4+7 a b^5+4 b^6 \\
9 \quad &  a^5+8 a^4 b+2 a^3 b^2+2 a^2 b^3+8 a b^4+b^5 \\
10 \quad &  9 a^4+8 a^3 b+10 a^2 b^2+8 a b^3+9 b^4 \\
11 \quad &  9 a^3+7 a^2 b+7 a b^2+9 b^3 \\
12 \quad &  a^2+6 a b+b^2 \\
13 \quad &  4 a+4 b \\
14 \quad &  -1 \\
\end{array}
$$
and
$$
\begin{array}{cc}
j & \text{Coefficient of $x^j$ in $-(a+x)^6 (b+x)^9$ modulo $p$}         \\
\hline
0  \quad & 10 a^6 b^9 \\
1  \quad & 2 a^6 b^8+5 a^5 b^9 \\
2  \quad & 8 a^6 b^7+a^5 b^8+7 a^4 b^9 \\
3  \quad & 4 a^6 b^6+4 a^5 b^7+8 a^4 b^8+2 a^3 b^9 \\
4  \quad & 6 a^6 b^5+2 a^5 b^6+10 a^4 b^7+7 a^3 b^8+7 a^2 b^9 \\
5  \quad & 6 a^6 b^4+3 a^5 b^5+5 a^4 b^6+6 a^3 b^7+8 a^2 b^8+5 a b^9 \\
6  \quad & 4 a^6 b^3+3 a^5 b^4+2 a^4 b^5+3 a^3 b^6+10 a^2 b^7+a b^8+10 b^9 \\
7  \quad & 8 a^6 b^2+2 a^5 b^3+2 a^4 b^4+10 a^3 b^5+5 a^2 b^6+4 a b^7+2 b^8 \\
8  \quad & 2 a^6 b+4 a^5 b^2+5 a^4 b^3+10 a^3 b^4+2 a^2 b^5+2 a b^6+8 b^7 \\
9  \quad & 10 a^6+a^5 b+10 a^4 b^2+3 a^3 b^3+2 a^2 b^4+3 a b^5+4 b^6 \\
10  \quad & 5 a^5+8 a^4 b+6 a^3 b^2+5 a^2 b^3+3 a b^4+6 b^5 \\
11 \quad & 7 a^4+7 a^3 b+10 a^2 b^2+2 a b^3+6 b^4 \\
12  \quad & 2 a^3+8 a^2 b+4 a b^2+4 b^3 \\
13  \quad & 7 a^2+a b+8 b^2 \\
14  \quad & 5 a+2 b \\
15  \quad & -1 \\
\end{array}
$$
Since the coefficient of $x^j$ in $-(a+x)^m (b+x)^n$ is $0$ when $j>m+n$, from these computations we expect to have the following relation: 

$\displaystyle{\sum_{k=0}^{p-1} (a+k)^m (b+k)^n k^s}$ is equal to the sum of the coefficients of $x^{p-1-s}$ and $x^{2(p-1)-s}$ in the polynomial $-(a+x)^m (b+x)^n$.

Similarly, for the general case when $b_1=b_2=\cdots=b_{n-1}=1$, the inner sum in the summation $J$ given in \thmref{thm general power1} can be simplified as follows: 
\begin{equation*}
\begin{split}
\sum_{\substack{j_1+\cdots+j_{n-1}=M_i \\ 0 \leq j_1 \leq m_1, \, \ldots, \, 0 \leq j_{n-1} \leq m_{n-1}}}
\binom{m_1}{j_1}  \cdots \binom{m_{n-1}}{j_{n-1}}
=\binom{m_1+\cdots+m_{n-1}}{M_i},
\end{split}
\end{equation*}
This equality is nothing but the generalized Vandermonde's identity and its algebraic proof is given by comparing the coefficients of $(1+x)^{m_1} \cdots (1+x)^{m_{n-1}}$ and $(1+x)^{m_1+\cdots+m_{n-1}}$. 

Again, our computations in \cite{MMA} lead to the following observation: 

The summation $J$ in \thmref{thm general power1} is equal to the sum of the coefficients of 
$x^{i(p-1)-m_n}$ in the polynomial $F(x)=-(b_1+x)^{m_1} \cdots (b_{n-1}+x)^{m_{n-1}}$,  where $i$ is a positive integer. 
We only need the cases with  $i(p-1)-m_n \leq m_1+\cdots+m_{n-1}$, i.e., $M_i \geq 0$ for $M_i$ as in \thmref{thm general power1}. Therefore, we can state the following result:
\begin{theorem}\label{thm general power2}
Given a prime number $p$, let $m_{i}$ and $a_{i}$ be integers with $1 \leq m_i \leq p-1$ and 
$0 \leq a_i \leq p-1$ for each $i=1, \, 2, \, \ldots, \, n $. If $a_i \neq a_j$ when $i \neq j$, we have the following congruence modulo $p$:
\begin{equation*}
\begin{split}
\sum_{k=0}^{p-1} (a_1+k)^{m_1}  \cdots  (a_n+k)^{m_n} &\equiv
-\sum_{i \geq 1} [x^{i(p-1)-m_n}] (b_1+x)^{m_1} \cdots (b_{n-1}+x)^{m_{n-1}},
\end{split}
\end{equation*}
where $[x^j]p(x)$ means the coefficient of $x^j$ in the polynomial $p(x)$, and $b_{i'}=a_{i'}-a_n$ for each index $i'=1, \, 2, \, \ldots, \, n-1$.
\end{theorem}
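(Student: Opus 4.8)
The plan is to prove the stated congruence directly from \thmref{thm power sum0}, sidestepping the case division of \thmref{thm general power1}; the right-hand side here turns out to be a single uniform repackaging of all three cases appearing there. First I would apply the index shift $k \mapsto k - a_n$, which permutes the complete residue system $\{0,1,\dots,p-1\}$ modulo $p$. This is the general-$n$ analogue of \remref{rem sums}, established at the start of the proof of \thmref{thm general power1}, valid with no restriction on the exponents, and it replaces the left-hand side by
$$\sum_{k=0}^{p-1}(b_1+k)^{m_1}\cdots(b_{n-1}+k)^{m_{n-1}}\,k^{m_n},\qquad b_{i'}=a_{i'}-a_n \ \ (1\le i'\le n-1).$$

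Next I would set $G(x)=\prod_{i'=1}^{n-1}(b_{i'}+x)^{m_{i'}}=\sum_{j\ge 0}c_j x^j$, a polynomial of degree $D=m_1+\cdots+m_{n-1}$, so that $c_j=[x^j]G(x)$ and $c_j=0$ whenever $j<0$ or $j>D$. Substituting $x=k$, summing over $k$, and interchanging the two finite sums gives
$$\sum_{k=0}^{p-1}G(k)\,k^{m_n}=\sum_{j=0}^{D}c_j\sum_{k=0}^{p-1}k^{\,j+m_n}.$$
Since $m_n\ge 1$, the exponent $j+m_n$ is a positive integer, so $0^{\,j+m_n}=0$ and the inner sum equals $\sum_{k=1}^{p-1}k^{\,j+m_n}$; by \thmref{thm power sum0} it is $\equiv -1$ when $(p-1)\mid (j+m_n)$ and $\equiv 0$ otherwise. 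Hence the whole sum is $\equiv -\sum_j c_j$, with $j$ ranging over the indices satisfying $0\le j\le D$ and $(p-1)\mid(j+m_n)$.

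The last step is bookkeeping: for $j\ge 0$ with $j+m_n\ge 1$, the condition $(p-1)\mid(j+m_n)$ is equivalent to $j=i(p-1)-m_n$ for some integer $i\ge 1$. Because $c_j$ vanishes outside $[0,D]$, we may let $i$ run over all positive integers at no cost, obtaining
$$\sum_{k=0}^{p-1}G(k)\,k^{m_n}\equiv -\sum_{i\ge 1}c_{\,i(p-1)-m_n}=-\sum_{i\ge 1}[x^{\,i(p-1)-m_n}]\prod_{i'=1}^{n-1}(b_{i'}+x)^{m_{i'}},$$
which, combined with the first step, is exactly the assertion. One can then note in passing that this recovers \thmref{thm general power1}: if $m_1+\cdots+m_n<p-1$ then $D<(p-1)-m_n$ forces every coefficient above to vanish; if some $m_i<p-1$ one identifies the right-hand sum with the summation $J$ via $j_{i'}=m_{i'}-\ell_{i'}$ together with $\binom{m_{i'}}{m_{i'}-\ell_{i'}}=\binom{m_{i'}}{\ell_{i'}}$; and if all $m_i=p-1$, evaluating $G$ at the nonzero residues (each factor $(b_{i'}+t)^{p-1}$ is $1$ except at $t\equiv -b_{i'}$, and the $n-1$ exceptional residues are distinct and nonzero because the $a_i$ are distinct) yields $-\sum_{i\ge1}c_{i(p-1)-(p-1)}\equiv -n$.

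I do not expect a genuine obstacle. The only points requiring care are (i) observing that the shift $k\mapsto k-a_n$ is unconditional, unlike the explicit multinomial expansion carried out in the proof of \thmref{thm general power1}, and (ii) the indexing argument allowing the right-hand sum to range over all $i\ge 1$ rather than only $1\le i\le t$, which is immediate from $\deg G=D$. Beyond that the argument is just the additivity of the coefficient functional $[x^j]\,\cdot\,$ and a single application of \thmref{thm power sum0}.
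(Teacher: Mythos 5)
Your proof is correct, and it takes a genuinely more direct route than the paper. The paper first proves \thmref{thm general power1} by essentially the same shift-and-expand computation you perform (replace $k$ by $k-a_n$, expand each factor by the Binomial Theorem, apply \thmref{thm power sum0}), but it keeps the expansion as an explicit multiple sum over indices $j_1,\dots,j_{n-1}$ with the bookkeeping parameters $M_i$ and $t$, and it handles $m_1=\cdots=m_n=p-1$ and $m_1+\cdots+m_n<p-1$ as separate cases; it then argues for \thmref{thm general power2} only by verifying for $n=1,2,3$ that the coefficient-extraction expression $\sum_{i\ge 1}[x^{i(p-1)-m_n}]\,(b_1+x)^{m_1}\cdots(b_{n-1}+x)^{m_{n-1}}$ reproduces the formulas of \thmref{thm general power1} and the earlier theorems, asserting that for $n>3$ ``the proof is obtained by following a similar argument.'' You instead prove the coefficient formula in one uniform stroke: writing $G(x)=\sum_j c_j x^j$, interchanging the finite sums, and observing that $\sum_{k=0}^{p-1}k^{j+m_n}\equiv -1$ precisely when $(p-1)\mid(j+m_n)$, i.e.\ $j=i(p-1)-m_n$ with $i\ge 1$ (the hypothesis $m_n\ge 1$ is what kills the $k=0$ term and forces $i\ge 1$). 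This eliminates all case analysis --- the values $0$ and $-n$ fall out automatically because $c_j=0$ outside $[0,D]$ and because the relevant coefficient sums collapse in the extreme cases --- it supplies the general-$n$ argument the paper leaves implicit, and it exhibits \thmref{thm general power1} as a corollary rather than a prerequisite. It is also worth noting that your main computation never uses the distinctness of the $a_i$, so the congruence of \thmref{thm general power2} holds without that hypothesis; the side remark at the end of your write-up about recovering the $-n$ case is dispensable and does not affect the main argument.
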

\begin{proof}
Let $n=1$. we have either $p-1-m_1 = 0$ or  $p-1-m_1 > 0$.

When  $p-1-m_1=0$,  $\displaystyle{\sum_{i \geq 1} [x^{i(p-1)-m_1}]1=[x^{p-1-m_1}]1=1}$. On the other hand, \thmref{thm power sum0} and \remref{rem sum1} give $\displaystyle{\sum_{k=0}^{p-1} (a_1+k)^{m_1}\equiv -1}$.

When  $p-1-m_1>0$, $\displaystyle{\sum_{i \geq 1} [x^{i(p-1)-m_1}]1=0}$, and $\displaystyle{\sum_{k=0}^{p-1} (a_1+k)^{m_1}\equiv 0}$ by \thmref{thm power sum0} and \remref{rem sum1}.

Thus, the theorem agrees with Wolstenholme's Theorem when $n=1$.

Let $n=2$. we have either $2(p-1)-m_2 = m_1$ or  $2(p-1)-m_2 > m_1$. 

The first case happens iff $m_1=m_2=p-1$. In this case,
\begin{equation*}
\begin{split}
\sum_{i \geq 1} [x^{i(p-1)-m_2}] (b_1+x)^{m_1} &= [x^{(p-1)-m_2}] (b_1+x)^{m_1} + [x^{2(p-1)-m_2}] (b_1+x)^{m_1} \\
& =  [x^{0}] (b_1+x)^{p-1} + [x^{p-1}] (b_1+x)^{p-1}\\
&=b_1^{p-1}+1=2, \quad \text{by \eqref{eqn FLT}}.
\end{split}
\end{equation*}
This is consistent with our finding in \thmref{thmcor new power sum3 second}. Namely, $\displaystyle{\sum_{k=0}^{p-1} (a_1+k)^{m_1}(a_2+k)^{m_2} \equiv -2}$.

When $2(p-1)-m_2 > m_1$, $ [x^{2(p-1)-m_2}] (b_1+x)^{m_1}=0$, so
\begin{equation*}
\begin{split}
\sum_{i \geq 1} [x^{i(p-1)-m_2}] (b_1+x)^{m_1} &= [x^{(p-1)-m_2}] (b_1+x)^{m_1} = \binom{m_1}{p-1-m_2}b_1^{m_1-(p-1)+m_2} \\
&\equiv \binom{m_1}{p-1-m_2}b_1^{m_1+m_2} = (a_1-a_2)^{m_1+m_2} \binom{m_1}{m_1+m_2-(p-1)}.
\end{split}
\end{equation*}
Again, this is consistent with our finding in \thmref{thmcor new power sum3 second}.
Note that when $m_1+m_2<p-1$, we have $[x^{(p-1)-m_2}] (b_1+x)^{m_1}=0$ and $\binom{m_1}{m_1+m_2-(p-1)}=0$.

Let $n=3$. We have either $3(p-1)-m_3 = m_1+m_2$ or  $3(p-1)-m_3 > m_1+m_2$.
The first case happens iff $m_1=m_2=m_3=p-1$. In this case,
\begin{equation*}
\begin{split}
\sum_{i \geq 1} [x^{i(p-1)-m_3}] (b_1+x)^{m_1}(b_2+x)^{m_2} &= [x^{0}] (b_1+x)^{m_1}(b_2+x)^{m_2}  + [x^{p-1}] (b_1+x)^{m_1}(b_2+x)^{m_2} \\
& \quad + [x^{2(p-1)}] (b_1+x)^{m_1}(b_2+x)^{m_2}\\
& = b_1^{p-1} b_2^{p-1}+ \sum_{k=0}^{p-1} \binom{m_1}{p-1-k} \binom{m_2}{k} b_1^{p-1-k} b_2^{k}  +1\\
& \equiv 2+\sum_{k=0}^{p-1} (-1)^{p-1-k} \binom{p-1}{k} b_1^{p-1-k} b_2^{k}, \quad \text{by \eqref{eqn FLT}}\\
& \equiv 2+(b_1-b_2)^{p-1}, \quad \text{by Binomial Theorem}\\
& \equiv 3 , \quad \text{by \eqref{eqn FLT}}.
\end{split}
\end{equation*}
This is consistent with our finding in \thmref{thm triple power2sg} and \remref{rem sums}. Namely, in this case we have
$\displaystyle{\sum_{k=0}^{p-1} (a_1+k)^{m_1}(a_2+k)^{m_2} (a_3+k)^{m_3} \equiv -3}$.

When $3(p-1)-m_3 > m_1+m_2$, $[x^{3(p-1)-m_3}] (b_1+x)^{m_1} (b_2+x)^{m_2}=0$, so
\begin{equation*}
\begin{split}
\sum_{i \geq 1} [x^{i(p-1)-m_3}] (b_1+x)^{m_1}(b_2+x)^{m_2} &= [x^{p-1-m_3}] (b_1+x)^{m_1}(b_2+x)^{m_2} \\
& \quad + [x^{2(p-1)-m_3}] (b_1+x)^{m_1}(b_2+x)^{m_2} \\
\end{split}
\end{equation*}
Moreover,
\begin{equation*}
\begin{split}
[x^{p-1-m_3}] (b_1+x)^{m_1}(b_2+x)^{m_2}=
\sum_{\substack{j_1 + j_2 = M_1 \\ j_1 \geq 0, \, j_2 \geq 0}} 
\binom{m_1}{j_1} \binom{m_2}{j_2} b_1^{j_1} b_2^{j_2},
\end{split}
\end{equation*}
where $M_1:=m_1+m_2+m_3-(p-1)$. As we have $m_1-j_1+m_2-j_2=p-1-m_3$ iff $j_1+j_2 = M_1$. 
\begin{equation*}
\begin{split}
[x^{2(p-1)-m_3}] (b_1+x)^{m_1}(b_2+x)^{m_2}=\sum_{\substack{j_1+j_2=M_2\\ j_1 \geq 0, \, j_2 \geq 0}} \binom{m_1}{j_1} \binom{m_2}{j_2} b_1^{j_1} b_2^{j_2},
\end{split}
\end{equation*}
where $M_2:=m_1+m_2+m_3-2(p-1)$. Because, $m_1-j_1+m_2-j_2=2(p-1)-m_3$ iff $j_1+j_2=M_2$.

Therefore, this is consistent with our finding in \thmref{thm general power1}.

When $n>3$, the proof is obtained by following a similar argument.
\end{proof}

It is well known that the coefficients of a polynomial can be obtained in terms of its roots by using Elementary Symmetric Polynomials. Recall that the $r$-th Elementary Symmetric Polynomial in $N$ variables $e_r(x_1,x_2,\ldots,x_N)$ is
defined as the product of all possible $r$ distinct variables among $x_1, \, x_2, \, \ldots, \, x_N$. Namely, we have

$e_0(x_1,x_2,\ldots,x_N)=1$, $e_1(x_1,x_2,\ldots,x_N)=x_1+\cdots+x_N$, 

$\displaystyle{e_2(x_1,x_2,\ldots,x_N)=\sum_{1\leq j_1<j_2\leq N} x_{j_1} x_{j_2}}, \qquad$
$\displaystyle{e_r(x_1,x_2,\ldots,x_N)=\sum_{1\leq j_1< \cdots <j_r \leq N} x_{j_1} \cdots x_{j_r}}$, 

$e_N(x_1,x_2,\ldots,x_N)= x_{1} x_{2} \cdots x_N$, and $e_r(x_1,x_2,\ldots,x_N)=0$ if $r>N$.

Since $[x^{N-j}]p(x)=(-1)^j e_{j}(x_1,\ldots,x_N)$ for a polynomial of degree $N$ with roots $x_1,\ldots,x_N$, we can rewrite 
\thmref{thm general power2} as follows:
\begin{theorem}\label{thm general power3}
Given a prime number $p$, let $m_{i}$ and $a_{i}$ be integers with $1 \leq m_i \leq p-1$ and 
$0 \leq a_i \leq p-1$ for each $i=1, \, 2, \, \ldots, \, n $. If $a_i \neq a_j$ when $i \neq j$, we have the following congruence modulo $p$:
\begin{equation*}
\begin{split}
\sum_{k=0}^{p-1} (a_1+k)^{m_1}  \cdots  (a_n+k)^{m_n} &\equiv
-\sum_{i \geq 1} (-1)^{M_i} e_{M_i}(x_1,x_2,\ldots,x_N),
\end{split}
\end{equation*}
where $x_1, \, \cdots, \, x_N$ are the roots of the polynomial $(b_1+x)^{m_1} \cdots (b_{n-1}+x)^{m_{n-1}}$ with $b_{i'}=a_{i'}-a_n$ for each index $i'=1, \, 2, \, \ldots, \, n-1$, and $M_i=m_1+m_2+\cdots+m_n-i(p-1)$.
\end{theorem}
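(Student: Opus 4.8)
The plan is to deduce Theorem~\ref{thm general power3} from Theorem~\ref{thm general power2} by translating coefficients of a monic polynomial into the elementary symmetric functions of its roots. First I would record the elementary identity that if $q(x)=\prod_{i=1}^{N}(x-x_i)$ is a monic polynomial of degree $N$, then expanding the product term by term gives $q(x)=\sum_{j=0}^{N}(-1)^{j}e_{j}(x_1,\ldots,x_N)\,x^{N-j}$, and hence $[x^{N-j}]q(x)=(-1)^{j}e_{j}(x_1,\ldots,x_N)$ for every integer $j$, under the standing convention $e_{j}=0$ whenever $j<0$ or $j>N$.

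Next I would apply this to the polynomial $q(x)=(b_1+x)^{m_1}\cdots(b_{n-1}+x)^{m_{n-1}}$ occurring in Theorem~\ref{thm general power2}, with $b_{i'}=a_{i'}-a_n$. It is monic of degree $N:=m_1+\cdots+m_{n-1}$, and its roots, listed with multiplicity, are $-b_{i'}$ repeated $m_{i'}$ times for $i'=1,\ldots,n-1$; denote them $x_1,\ldots,x_N$. For the exponent $j=i(p-1)-m_n$ that appears in Theorem~\ref{thm general power2} one computes
\[
N-j=(m_1+\cdots+m_{n-1})-\bigl(i(p-1)-m_n\bigr)=m_1+\cdots+m_n-i(p-1)=M_i,
\]
so that $[x^{i(p-1)-m_n}]q(x)=(-1)^{M_i}e_{M_i}(x_1,\ldots,x_N)$. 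Here $M_i\le M_1=N+m_n-(p-1)\le N$ because $m_n\le p-1$, so the index $M_i$ never exceeds $N$; and $M_i<0$ precisely when $i$ passes the threshold $t$ of Theorem~\ref{thm general power1}, where the corresponding summand vanishes. Thus the sum over $i\ge 1$ is finite and agrees with the one in Theorem~\ref{thm general power2}.

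Substituting the coefficient identity termwise into the congruence of Theorem~\ref{thm general power2} then yields
\[
\sum_{k=0}^{p-1}(a_1+k)^{m_1}\cdots(a_n+k)^{m_n}\equiv-\sum_{i\ge 1}[x^{i(p-1)-m_n}]q(x)\equiv-\sum_{i\ge 1}(-1)^{M_i}e_{M_i}(x_1,\ldots,x_N)\pmod p,
\]
which is exactly the assertion of Theorem~\ref{thm general power3}.

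There is no genuine obstacle: the statement is a change of language rather than a new argument. The only points deserving care are that $q(x)$ is honestly monic, so no leading-coefficient factor intrudes when passing from coefficients to symmetric functions; that the degenerate regimes already absorbed into Theorem~\ref{thm general power2}—the case $m_1=\cdots=m_n=p-1$ (value $-n$) and the case $m_1+\cdots+m_n<p-1$ (value $0$)—remain correctly encoded, since in the first the summands reproduce the evaluation computed in the proof of Theorem~\ref{thm general power2} and in the second every $M_i$ is negative so the right-hand side is the empty sum; and that the convention $e_r=0$ for $r\notin\{0,\ldots,N\}$ is in force, so that the displayed sum over $i\ge 1$ denotes the same finite sum as before.
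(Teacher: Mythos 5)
Your proposal is correct and is essentially the same argument as the paper's own proof: both translate Theorem~\ref{thm general power2} via the identity $[x^{N-j}]q(x)=(-1)^{j}e_{j}(x_1,\ldots,x_N)$ for the monic polynomial $q(x)=(b_1+x)^{m_1}\cdots(b_{n-1}+x)^{m_{n-1}}$ and the index computation $N-(i(p-1)-m_n)=M_i$. Your additional remarks on the vanishing conventions and degenerate cases are harmless elaborations of the same one-step reduction.
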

\begin{proof}
The polynomial $p(x)=(b_1+x)^{m_1} \cdots (b_{n-1}+x)^{m_{n-1}}$ has degree $N=m_1+\cdots+m_{n-1}$, so $i(p-1)-m_n=N-j$ gives
$j=m_1+m_2+\cdots+m_n-i(p-1)$, i.e., $j=M_i$ with our earlier notation. Thus, 
$[x^{i(p-1)-m_n}] p(x)=(-1)^{M_i} e_{M_i}(x_1,x_2,\ldots,x_N)$. 
Hence the proof follows from \thmref{thm general power2}.
\end{proof}
We also recall that Elementary Symmetric Polynomials can be computed by using Newton's Identities (see \cite[Pg. 5]{BE} and 
\cite[Pg. 278]{RS}):
\begin{equation}\label{eqn Newton}
\begin{split}
r e_{r}(x_1,x_2,\ldots,x_N)=\sum_{i=1}^r (-1)^{i-1} e_{r-i}(x_1,\ldots,x_N)p_{i}(x_1,\ldots,x_N),
\end{split}
\end{equation}
where $\displaystyle{p_{r}(x_1,\ldots,x_N)=\sum_{i=1}^N}x_i^r$ is the $r-$th power sum for each integer $r \geq 1$. In our case,
by taking $x_i$'s as the roots of $(b_1+x)^{m_1} \cdots (b_{n-1}+x)^{m_{n-1}}$, we have
\begin{equation}\label{eqn power1}
\begin{split}
(-1)^r p_{r}(x_1,\ldots,x_N)=m_1 b_1^r+m_2 b_2^r+\cdots+m_{n-1}b_{n-1}^{r}.
\end{split}
\end{equation}
Thus, we can also use \eqref{eqn Newton} and \eqref{eqn power1} to compute the desired Elementary Symmetric Polynomials.

\thmref{thm general power1}, \thmref{thm general power2} and \thmref{thm general power3} should be considered along with a generalization of \remref{rem triple all}.

\textbf{Example:} Let $p=17$, and let $I:=\sum_{k=0}^{16}(14+k)^3(10+k)^8(k+4)^9$. In this case, we have $M=3+8+9-16=4$, $p-1-s=7$ and by \corref{cor general constant},
$$
I \equiv \sum_{k=0}^{16}(10+k)^3(6+k)^8k^9 \equiv 6^4 \sum_{k=0}^{16}(6^{-1}10+k)^3(1+k)^8k^9  
\equiv 4 \sum_{k=0}^{16}(13+k)^3(1+k)^8k^9. 
$$
Thus, $I \equiv -4 [x^7](13+x)^3(1+x)^8$ by \thmref{thm general power2} or equivalently $I \equiv -4 (-1)^4 e_4$
by \thmref{thm general power3}. Since $p_r=3 (-13)^r+8 (-1)^r$ by \eqref{eqn power1}, we have
\begin{equation*}
\begin{split}
p_r \equiv
&\begin{cases}
11,  \quad \text{if $r \equiv 0 \, \, mod \, \, 4$}\\
4,  \quad \text{if $r \equiv 1 \, \, mod \, \, 4$}\\
5,  \quad \text{if $r \equiv 2 \, \, mod \, \, 4$}\\
-3,  \quad \text{if $r \equiv 3 \, \, mod \, \, 4$}.
\end{cases}
\end{split}
\end{equation*}
Then, \eqref{eqn Newton} gives
\begin{equation*}
\begin{split}
&e_0 \equiv 1, \quad e_1 \equiv p_1 \equiv 4.\\
&2e_2 = e_1 p_1 -p_2 \equiv 4^2-5, \quad \text{so $e_2 \equiv -3$}.\\
&3e_3 = e_2 p_1 -e_1p_2 +p_3 \equiv (-3)4-4 \cdot 5+(-3), \quad \text{so $e_3 \equiv -6$}.\\
&4e_4 = e_3 p_1 -e_2p_2 +e_1p_3-p_4 \equiv (-6)4-(-3) 5+4(-3)-11, \quad \text{so $e_4 \equiv 9$}.\\
\end{split}
\end{equation*}
Hence, $I \equiv -4 \cdot 9 \equiv  15 \, \, mod \, \, 17$.


\textbf{Declaration of competing interest:} The author declares that he has no known competing financial interest or personal relationship that could have appeared to influence the work reported in this paper.

\end{document}